\theoremstyle{plain}
\newtheorem{conjecture}{Conjecture}
\newtheorem{defn}{Definition}[section]
\newtheorem{thm}[defn]{Theorem}
\newtheorem{cor}[defn]{Corollary}
\newtheorem{nota}[defn]{Notation}
\newtheorem{convention}[defn]{Convention}
\newtheorem{prop}[defn]{Proposition}
\newtheorem{remark}[defn]{Remark}
\newtheorem{lm}[defn]{Lemma}
\newtheorem{fact}[defn]{Fact}
\newtheorem{construction}[defn]{Construction}
\newtheorem{case}{Case}
\newtheorem{condition}{Condition}
\newtheorem{case2}{Case}
\newtheorem{assume}[defn]{Assumption}
\newtheorem*{1-mfd}{Proposition \ref{1-manifold}}
\newtheorem*{Dehn filling}{Proposition \ref{Dehn filling}}
\newcommand\blfootnote[1]{%
	\begingroup
	\renewcommand\thefootnote{}\footnote{#1}%
	\addtocounter{footnote}{-1}%
	\endgroup
}
\begin{document}
	
\title{Left orderability and taut foliations with orderable cataclysm}
\author{Bojun Zhao}
\address{Department of Mathematics, University at Buffalo, Buffalo, NY 14260, USA}
\email{bojunzha@buffalo.edu}
\maketitle

\begin{abstract}
	Let $M$ be a connected, closed, orientable, irreducible $3$-manifold.
	We show that:
	if $M$ admits a co-orientable taut foliation $\mathcal{F}$ with orderable cataclysm,
	then $\pi_1(M)$ is left orderable.
	This provides an elementary proof that 
	$\pi_1(M)$ is left orderable
	if $M$ admits an Anosov flow with a co-orientable stable foliation
	without using Thurston's universal circle action.
	Furthermore,
	for every closed orientable 3-manifold that admits a pseudo-Anosov flow $X$ with a co-orientable stable foliation,
	our result applies to infinitely many of Dehn fillings along 
	the union of singular orbits of $X$.
\end{abstract}

\blfootnote{2020 \emph{Mathematics Subject Classification}.
	57M50, 57M60.}

\section{Introduction}

Throughout this paper,
$M$ will always be a connected, closed, orientable, irreducible $3$-manifold,
and $\widetilde{M}$ will always be the universal cover of $M$.
The \emph{stable/unstable foliation} of an Anosov flow will always mean
its weak stable/unstable foliation.

A motivation of our work is 
the L-space conjecture,
which is proposed by Boyer-Gordon-Watson 
in \hyperref[BGW]{[BGW]} and 
by Juh\'asz in \hyperref[J]{[J]}:

\begin{conjecture}[L-space conjecture]\label{L-space conjecture}
	The following statements are equivalent for $M$:
	
	(1)
	$M$ is a non-L-space.
	
	(2)
	$\pi_1(M)$ is left orderable.
	
	(3)
	$M$ admits a co-orientable taut foliation $\mathcal{F}$.
\end{conjecture}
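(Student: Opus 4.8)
\medskip\noindent
We sketch a plan of attack for Conjecture \ref{L-space conjecture}, which is open in full generality; what follows is the standard program together with an honest account of where it stalls. Beyond the single implication that is currently a theorem, $(3)\Rightarrow(1)$, it is enough to establish $(3)\Rightarrow(2)$, $(2)\Rightarrow(3)$ and $(1)\Rightarrow(3)$, for then $(1)\Leftrightarrow(3)\Leftrightarrow(2)$ and the conjecture follows by composition. For $(3)\Rightarrow(1)$ I would simply quote the known argument: a co-orientable taut foliation on $M$ --- approximated by a weakly fillable, hence tight, contact structure by the Eliashberg--Thurston theorem in its $C^0$ extension due to Bowden and to Kazez--Roberts --- forces, by Ozsv\'ath--Szab\'o, that $M$ is a non-L-space when it is a rational homology sphere. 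No new idea is needed here.

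For the remaining implications I would run the geometrization case split, as all partial progress has. Since $M$ is irreducible, the cases are Seifert-fibered, the various torus-bundle and graph-manifold types, and hyperbolic. In the Seifert-fibered case all three conditions translate into arithmetic of the base orbifold and the Euler number (Eisenbud--Hirsch--Neumann and Jankins--Neumann--Naimi for horizontal foliations, Boyer--Rolfsen--Wiest for left orders, Lisca--Stipsicz for L-spaces) and one checks that they coincide; the torus-bundle cases go by hand, and the graph-manifold case is known (Boyer--Clay; Hanselman--Rasmussen--Rasmussen--Watson). So the core difficulty is the hyperbolic case.

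There the one constructible edge is $(3)\Rightarrow(2)$, which is what this paper treats. From $\mathcal{F}$ one lifts to $\widetilde{\mathcal{F}}$ on $\widetilde{M}$; in favorable situations --- for instance when the leaf space is a line --- left-orderability of $\pi_1(M)$ can be read off fairly directly, but in general one passes to Thurston's universal circle, on which $\pi_1(M)$ acts faithfully, and then must promote this circle action to a faithful action on $\mathbb{R}$, equivalently show that the pulled-back Euler class in $H^2(M;\mathbb{Z})$ vanishes (or produce a section). That promotion is the obstruction for the unconditional conjecture, and the ``orderable cataclysm'' hypothesis of the present paper is designed precisely to sidestep the universal-circle machinery and manufacture the faithful $\mathbb{R}$-action directly.

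The implications where the conjecture is genuinely stuck --- and where I expect the main obstacle to lie --- are $(1)\Rightarrow(3)$ and $(2)\Rightarrow(3)$: building a co-orientable taut foliation out of Floer-theoretic, respectively purely algebraic, data on a hyperbolic rational homology sphere. For $(1)\Rightarrow(3)$ the only candidate route is to feed the nontriviality of $\widehat{HF}$, through the Thurston-norm and sutured-homology detection theorems (Ozsv\'ath--Szab\'o, Ghiggini, Ni, Juh\'asz), into a sutured-manifold hierarchy and a Gabai-style branched-surface construction, and then arrange co-orientability; but Gabai's theorem builds taut foliations only when $b_1>0$, so for rational homology spheres no construction is known. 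For $(2)\Rightarrow(3)$ one would have to reverse the universal-circle picture --- producing an honest taut foliation from a faithful $\pi_1(M)$-action on $\mathbb{R}$, in the spirit of Calegari--Dunfield --- but a generic left-orderable action carries no invariant lamination to begin with, and promoting an essential lamination to a taut foliation is itself delicate; this seems to me the deepest missing piece, and I would be surprised if it yielded without a substantially new idea.
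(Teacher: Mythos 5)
This statement is a conjecture, not a theorem: the paper states it purely as motivation and offers no proof of any implication in full generality, so there is no ``paper's own proof'' to compare against. You correctly recognize this, and your proposal is an honest program sketch rather than a proof; as such it contains no false steps. Your account of the known partial results is accurate --- $(3)\Rightarrow(1)$ via Eliashberg--Thurston, its $C^0$ extensions by Bowden and Kazez--Roberts, and Ozsv\'ath--Szab\'o; the Seifert-fibered and graph-manifold cases; Gabai and Boyer--Rolfsen--Wiest when $b_1>0$ --- and your reduction of the equivalence to the three implications $(3)\Rightarrow(2)$, $(2)\Rightarrow(3)$, $(1)\Rightarrow(3)$ given the known $(3)\Rightarrow(1)$ is logically sound. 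You also correctly locate this paper's actual contribution: a conditional form of $(3)\Rightarrow(2)$, namely Theorem \ref{orderable cataclysm}, which assumes the foliation has orderable cataclysm and extracts a left-invariant order from the ends of the leaf space of $\widetilde{\mathcal{F}}$, precisely so as to bypass the universal-circle machinery and the Euler-class obstruction you describe. The one caution is that nothing in your sketch should be mistaken for progress on the genuinely open implications $(1)\Rightarrow(3)$ and $(2)\Rightarrow(3)$, and you are right to flag these as requiring new ideas.
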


The implication (3) $\Rightarrow$ (1) is
proved by Ozsv\'ath and Szab\'o in \hyperref[OS]{[OS]}
(see also \hyperref[Bo]{[Bo]}, \hyperref[KR]{[KR]}).
Gabai (\hyperref[G1]{[G1]}) proves that $M$ admits co-orientable taut foliations if
$M$ has positive first Betti number.
Boyer, Rolfsen and Wiest (\hyperref[BRW]{[BRW]}) prove that
$\pi_1(M)$ is left orderable if $b_1(M) > 0$.
Conjecture \ref{L-space conjecture} is proved when $M$ is a graph manifold by
a combined effort of
\hyperref[BC]{[BC]}, \hyperref[R]{[R]}, \hyperref[HRRW]{[HRRW]}.
The implication (3) $\Rightarrow$ (2) is proved when $\mathcal{F}$ 
has one-sided branching in \hyperref[Zh]{[Zh]}.

Let $\mathcal{F}$ be a taut foliation of $M$,
and let $\widetilde{\mathcal{F}}$ be the pull-back foliation of $\mathcal{F}$ in $\widetilde{M}$.
A \emph{cataclysm} of $\widetilde{\mathcal{F}}$ is
a maximal collection of points in the leaf space of $\widetilde{\mathcal{F}}$ 
which are pairwise non-separated (see Definition \ref{cataclysm}, Definition \ref{cataclysm for foliations}).
We say that $\mathcal{F}$ has \emph{orderable cataclysm} if
there is a linear order at every cataclysm of $\widetilde{\mathcal{F}}$ which
is invariant under the deck transformations of $\widetilde{M}$.
Our main theorem is

\begin{thm}\label{orderable cataclysm}
	If $M$ admits a co-orientable taut foliation $\mathcal{F}$ with orderable cataclysm,
	then $\pi_1(M)$ is left orderable.
	In particular,
	if $\mathcal{F}$ has orderable cataclysm and has two-sided branching,
	then the set of ends of the leaf space of $\widetilde{\mathcal{F}}$ 
	(i.e. the pull-back foliation of $\mathcal{F}$ in $\widetilde{M}$)
	has a linear order invariant under
	the deck transformations of $\widetilde{M}$.
\end{thm}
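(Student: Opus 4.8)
The plan is to work with the leaf space $\mathcal{L}$ of $\widetilde{\mathcal{F}}$, a simply connected (in general non-Hausdorff) $1$-manifold on which $\pi_1(M)$ acts. Since $\mathcal{F}$ is co-orientable, the co-orientation pulls back to a $\pi_1(M)$-invariant orientation of $\mathcal{L}$, hence to a $\pi_1(M)$-invariant partial order $\leq$, where $\ell\leq\ell'$ means there is a positively oriented path in $\mathcal{L}$ from $\ell$ to $\ell'$. This is the usual order-tree structure: $\leq$ is total along each segment of $\mathcal{L}$, and $\ell,\ell'$ are incomparable exactly when the path between them is not monotone, i.e.\ turns at one or more cataclysms of $\widetilde{\mathcal{F}}$. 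The first step is to promote $\leq$ to a $\pi_1(M)$-invariant \emph{total} order $\prec$ on $\mathcal{L}$: for incomparable $\ell,\ell'$, one uses the given $\pi_1(M)$-invariant linear order on a suitable cataclysm separating them (e.g.\ one at which the path from $\ell$ to $\ell'$ first branches away from $\ell$) to decide whether $\ell\prec\ell'$ or $\ell'\prec\ell$, and lets $\prec$ agree with $\leq$ on comparable pairs.

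That $\prec$ is total, and that $\pi_1(M)$ preserves it (since it preserves $\leq$ and permutes the cataclysms respecting their orders), is essentially formal. The substantive point --- and the step I expect to be the main obstacle --- is \emph{transitivity} of $\prec$. Given $\ell_1\prec\ell_2\prec\ell_3$ one must trace how the cataclysms governing the three pairwise comparisons sit relative to one another along the order tree and rule out any ``cyclic'' configuration. The geometric input is that distinct cataclysms of $\widetilde{\mathcal{F}}$ are separated by nondegenerate segments of $\mathcal{L}$, so the comparisons compose lexicographically; making this precise requires a case analysis over the branching patterns (upward versus downward branching, nested versus parallel cataclysms).

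Granting $\prec$, the theorem follows by standard order-group arguments. Let $K\trianglelefteq\pi_1(M)$ be the kernel of the action on $\mathcal{L}$. Then $\pi_1(M)/K$ acts faithfully on the linearly ordered set $(\mathcal{L},\prec)$ by order-preserving bijections, and a group admitting a faithful action by order-preserving bijections on a linearly ordered set is left orderable (a standard fact; one orders the group using a fixed well-order of $\mathcal{L}$ to select, for each nontrivial element, its least moved leaf, and records whether that leaf moves up or down under $\prec$). On the other hand, each $g\in K$ fixes every leaf of $\widetilde{\mathcal{F}}$, so $K$ lies in the stabilizer of a fixed leaf $L$; as $\mathcal{F}$ is taut, hence Reebless, its leaves are $\pi_1$-injective and the leaves of $\widetilde{\mathcal{F}}$ are planes, so this stabilizer is isomorphic to the fundamental group of a leaf of $\mathcal{F}$ --- a $2$-sided, hence orientable, surface in the orientable manifold $M$ --- which is left orderable. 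Thus $K$ is left orderable, and since left orderability is closed under extensions, $1\to K\to\pi_1(M)\to\pi_1(M)/K\to 1$ gives that $\pi_1(M)$ is left orderable.

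For the ``in particular'' statement, when $\mathcal{F}$ has two-sided branching the same construction applies to the ends of $\mathcal{L}$ in place of its leaves: an end is represented by a proper ray, and two ends are compared by running the rule above on representative rays --- they eventually lie in different branches of some cataclysm, whose order decides the comparison --- which yields a $\pi_1(M)$-invariant linear order on the set of ends (equivalently, the ends embed $\pi_1(M)$-equivariantly and order-preservingly into the Dedekind completion of $(\mathcal{L},\prec)$). Two-sided branching is precisely what ensures enough such cataclysms exist for every pair of ends to be compared.
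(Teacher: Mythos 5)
Your overall architecture matches the paper's in spirit (use the equivariant orders on cataclysms to linearly order something attached to the leaf space, then invoke standard facts about left-orderable groups: the quotient acts faithfully and order-preservingly on a linearly ordered set, the kernel sits in a leaf stabilizer which is an orientable surface group, and one concludes either by the extension property or by the Boyer--Rolfsen--Wiest criterion). The closing reduction is fine. But the proposal has a genuine gap exactly where you flag ``the main obstacle'': transitivity (and, in fact, totality) of the comparison is the entire technical content of the theorem, and it is not carried out. In the paper this is done by a different and more global mechanism than your ``first cataclysm where the paths diverge'' rule: one forms, for two \emph{positive} ends $x_1,x_2$, the canonical broken path $\alpha(x_1,x_2)$, assigns each cusp a sign via the cataclysm order, and sets $n(x_1,x_2)$ to be the signed count of cusps. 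Two lemmas then do all the work: (i) a parity argument (the first segment of $\alpha(x_1,x_2)$ is negatively oriented and the last is positively oriented, so the number of cusps is odd, hence $n(x_1,x_2)\neq 0$) gives totality; (ii) a quasi-additivity statement $n(x_1,x_3)\in\{n(x_1,x_2)+n(x_2,x_3)-1,\ n(x_1,x_2)+n(x_2,x_3)+1\}$, proved by a careful three-case analysis of how the three broken paths $\alpha(x_i,x_j)$ overlap near their common branch region, gives transitivity of the relation $n<0$. Neither of these is ``essentially formal,'' and your local tie-breaking rule would require its own (unwritten and nonobvious) argument that no cyclic configuration arises when the governing cataclysms for the three pairwise comparisons are distinct and the broken paths have several alternating cusps.

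A second, related problem: you propose to totally order \emph{all} leaves of $L$ and, in the last paragraph, \emph{all} ends by the same rule. The parity argument above is what guarantees comparability, and it only works for pairs of ends of the same sign; for a positive end and a negative end there may be no cataclysm separating them at all (e.g.\ the two ends of a single properly embedded line in $L$), and for arbitrary pairs of leaves the signed cusp count can vanish. The paper sidesteps this by ordering $End_+(L)$ and $End_-(L)$ separately and using the orientation (preserved since $\mathcal{F}$ is co-orientable) to compare ends of opposite sign; your draft should either restrict to same-sign ends as the paper does or supply a separate argument for the mixed and leaf-versus-leaf comparisons. Until the analogue of the quasi-additivity lemma is proved for whatever comparison rule you adopt, the proof is incomplete at its central step.
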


We provide some applications of Theorem \ref{orderable cataclysm} as follows.

\subsection{Examples: Anosov flows}\label{subsection 1.1}

Fenley (\hyperref[F3]{[F3]}) proves that
the stable and unstable foliations of Anosov flows have orderable cataclysm,
so Theorem \ref{orderable cataclysm} applies to them.

\begin{cor}\label{Anosov}
	If $M$ admits a co-orientable taut foliation which is
	the stable foliation of an Anosov flow,
	then $\pi_1(M)$ is left orderable.
	In particular,
	if $M$ admits an Anosov flow,
	then either $\pi_1(M)$ is left orderable or 
	$\pi_1(M)$ has an index $2$ left orderable subgroup.
\end{cor}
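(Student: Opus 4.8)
The plan is to deduce Corollary \ref{Anosov} directly from Theorem \ref{orderable cataclysm}. For the first assertion there is essentially nothing to do: if $\mathcal{F}$ is a co-orientable taut foliation of $M$ that coincides with the weak stable foliation of an Anosov flow, then by Fenley's theorem (\hyperref[F3]{[F3]}, recalled above) $\mathcal{F}$ has orderable cataclysm, so Theorem \ref{orderable cataclysm} immediately yields that $\pi_1(M)$ is left orderable.

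For the ``in particular'' clause I would argue as follows. Let $\phi$ be an Anosov flow on $M$ and $\mathcal{F}^s$ its weak stable foliation; recall that $\mathcal{F}^s$ is a taut foliation, and that its normal line bundle is the strong unstable bundle, so $\mathcal{F}^s$ is co-orientable precisely when the corresponding class in $H^1(M;\mathbb{Z}/2\mathbb{Z})$ vanishes. If it vanishes we are in the situation of the first assertion and $\pi_1(M)$ is left orderable. If it does not, let $p\colon\widehat{M}\to M$ be the connected double cover determined by that class, so that $\pi_1(\widehat{M})$ is an index $2$ subgroup of $\pi_1(M)$. Then $\widehat{M}$ is again connected, closed, orientable, and irreducible (irreducibility is automatic: a closed orientable $3$-manifold carrying an Anosov flow is aspherical, hence so is the finite cover $\widehat{M}$, and a closed orientable aspherical $3$-manifold is irreducible), the lifted flow $p^{*}\phi$ is again Anosov, and $p^{*}\mathcal{F}^s$ is its weak stable foliation, which is now co-orientable because the pulled-back obstruction class is zero. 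Applying the first assertion (i.e. Theorem \ref{orderable cataclysm} together with \hyperref[F3]{[F3]}) to $\widehat{M}$ shows that $\pi_1(\widehat{M})$ is left orderable.

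All the substantive content sits in Theorem \ref{orderable cataclysm}; the remaining ingredients --- that the weak stable foliation of an Anosov flow is taut, that a finite lift of an Anosov flow is Anosov, that tautness and orderability of cataclysms pass to the cover (immediate, since $p^{*}\mathcal{F}^s$ is literally the stable foliation of $p^{*}\phi$), and that $\widehat{M}$ inherits the standing hypotheses --- are all standard. The only point that deserves a word is why a single double cover suffices to repair co-orientability, and that is simply because orientability of a line bundle over $M$ is governed by one cohomology class in $H^1(M;\mathbb{Z}/2\mathbb{Z})$; I do not anticipate any genuine obstacle in the corollary beyond this routine bookkeeping.
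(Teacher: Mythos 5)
Your proof is correct and follows the same route as the paper: the first assertion is exactly Fenley's theorem (\hyperref[F3]{[F3]}, Theorem \ref{pseudo-anosov}) combined with Theorem \ref{orderable cataclysm}, and the ``in particular'' clause is handled by the standard double cover killing the co-orientation obstruction in $H^1(M;\mathbb{Z}/2\mathbb{Z})$, which is the argument the paper leaves implicit. Your added checks (that $\widehat{M}$ is irreducible via asphericity, and that the lifted flow is Anosov with pulled-back stable foliation) are accurate and complete the bookkeeping correctly.
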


\begin{remark}\rm
Corollary \ref{Anosov} also follows directly from
Thurston's universal circle action (\hyperref[T]{[T]}, \hyperref[CD]{[CD]}).
We briefly describe its background below.
Given a taut foliation $\mathcal{F}$ of $M$ with 
leaves having a continuously varying leafwise hyperbolic metric,
Thurston constructs a nontrivial homomorphism 
$$\rho_{\text{univ}}: \pi_1(M) \to Homeo(S^{1})$$
associated to $\mathcal{F}$,
called the \emph{universal circle action}.
The universcal circle action has the following properties:
(1)
when $\mathcal{F}$ is co-orientable,
$\rho_{\text{univ}}(\pi_1(M)) \subseteq Homeo_+(S^{1})$,
(2)
when $M$ is atoroidal,
$\rho_{\text{univ}}$ is injective.
Moreover,
$\rho_{\text{univ}}$ lifts to an action of $\pi_1(M)$ on $\mathbb{R}$ if
the tangent plane field of $\mathcal{F}$ has zero Euler class
(see, for example, \hyperref[BH]{[BH, Section 7]}).

Suppose that $\mathcal{F}$ is the stable foliation of an Anosov flow $X$ in $M$ and
that $\mathcal{F}$ is co-orientable. 
We denote by $T\mathcal{F}$ the tangent plane field of $\mathcal{F}$ and
denote by $e(T\mathcal{F})$ the Euler class of $T\mathcal{F}$.
Since $X$ is a section of $T\mathcal{F}$,
$e(T\mathcal{F}) = 0$.
If $b_1(M) = 0$,
then the leaves of $\mathcal{F}$ have a continuous varying leafwise hyperbolic metric
(\hyperref[BH]{[BH, Theorem 8.3]}),
and thus $\mathcal{F}$ has 
an associated universal circle action that lifts to a nontrivial action of $\pi_1(M)$ on $\mathbb{R}$.
By \hyperref[BRW]{[BRW, Theorem 3.2]},
$\pi_1(M)$ is left orderable.
If $b_1(M) > 0$,
then we know from \hyperref[BRW]{[BRW, Lemma 3.3]} that $\pi_1(M)$ is left orderable.
Thus,
the universal circle action implies that 
$\pi_1(M)$ is left orderable.
\end{remark}

The universal circle action
involves much subtle geometric and dynamical information about the foliations,
but the proof of Theorem \ref{orderable cataclysm} 
doesn't rely on this information.
Theorem \ref{orderable cataclysm} provides 
an elementary explanation for the left orderability of $\pi_1(M)$ that
is only related to the leaf space:
if $M$ admits an Anosov flow with
a co-orientable, non-$\mathbb{R}$-covered stable foliation $\mathcal{F}$
(which has two-sided branching, see Remark \ref{non-R-covered}),
then we can obtain a left-invariant order of $\pi_1(M)$ from
the set of ends of the corresponding leaf space
(see Subsection \ref{subsection 3.3}).

\begin{remark}\rm\label{non-R-covered}
	We provide some resources about Anosov flows with two-sided branching stable and unstable foliations.
	As shown in \hyperref[F2]{[F2]},
	stable and unstable foliations of any Anosov flows are either $\mathbb{R}$-covered or
	have two-sided branching.
	There are many examples of Anosov flows with two-sided branching stable and unstable foliations,
	e.g. \hyperref[BL]{[BL]}, \hyperref[Ba1]{[Ba1]}.
	See more information about such Anosov flows in \hyperref[F1]{[F1]}, \hyperref[F3]{[F3]}.
\end{remark}

\subsection{Examples: Dehn fillings and pseudo-Anosov flows with 
	co-orientable stable foliations}

Suppose $M$ admits a pseudo-Anosov flow $X$ with
a co-orientable stable foliation $\mathcal{F}^{s}$.
Let $\{\gamma_1, \ldots, \gamma_n\}$ be the set of singular orbits of $X$ and
let $\gamma = \bigcup_{i=1}^{n} \gamma_i$.
We fix a regular neighborhood $N(\gamma_i)$ of $\gamma_i$ in $M$ for every $1 \leqslant i \leqslant n$,
and we denote by $N(\gamma)$ the union of these neighborhoods.
For each $1 \leqslant i \leqslant n$,
we denote by $l_i$ the singular leaf of $\mathcal{F}^{s}$ containing $\gamma_i$
and denote by $s_i$ the slope on $\partial N(\gamma_i)$ represented by
the intersection curves of $l_i$ and $\partial N(\gamma_i)$.
We call $(s_1,\ldots,s_n)$ the \emph{preferred framing} on $\partial N(\gamma)$.
For two slopes $u, v$ on $\partial N(\gamma_i)$ ($i \in \{1, \ldots, n\}$),
let $\Delta(u, v)$ denote the minimal geometric intersection number of $u, v$.

Given any multislope $(r_1,\ldots,r_n)$ on $\partial N(\gamma)$ such that
$r_i \ne s_i$ ($1 \leqslant i \leqslant n$).
We denote by $N$ 
the Dehn filling of $M - Int(N(\gamma))$ along $\partial N(\gamma)$ with the multislope $(r_1,\ldots,r_n)$.
Then $N$ admits a co-orientable taut foliation constructed through the following process:

(1)
We can split open $\mathcal{F}^{s}$ along the union of singular leaves to obtain
an essential lamination $\mathcal{L}^{s}$.
Then every complementary region of $\mathcal{L}^{s}$ in $M$
is a bundle of ideal polygons with an even number of sides over $S^{1}$.
We may assume $\mathcal{L}^{s} \subseteq M - Int(\partial N(\gamma))$,
then we can consider $\mathcal{L}^{s}$ as a lamination of $N$.
The complementary regions of $\mathcal{L}^{s}$ in $N$
are still bundles of ideal polygons with an even number of sides over $S^{1}$.

(2)
We can construct a co-orientable taut foliation of $N$
by filling the complementary regions of $\mathcal{L}^{s}$ in $N$
with monkey saddles (see \hyperref[G2]{[G2]}, or see an explanation in \hyperref[C]{[C, Example 4.22]}).

Considering this taut foliation when $\Delta(r_i,s_i) = 1$,
we have

\begin{prop}\label{Dehn filling}
	Let $(r_1,\ldots,r_n)$ be a multislope on $\partial N(\gamma)$.
	If $\Delta(r_i,s_i) = 1$ for every $1 \leqslant i \leqslant n$,
	then the Dehn filling of $M - Int(N(\gamma))$ along $\partial N(\gamma)$ with the multislope $(r_1,\ldots,r_n)$ 
	admits a co-orientable taut foliation with orderable cataclysm,
	and thus it has left orderable fundamental group.
\end{prop}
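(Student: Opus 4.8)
The plan is to verify that the co-orientable taut foliation $\mathcal{F}_N$ produced by the construction preceding the statement — split $\mathcal{F}^{s}$ open along its singular leaves to get the essential lamination $\mathcal{L}^{s}$, view $\mathcal{L}^{s}$ in $N$, and fill its complementary regions by monkey saddles — has orderable cataclysm, and then to quote Theorem \ref{orderable cataclysm} (which applies, since $N$ is irreducible, carrying the essential lamination $\mathcal{L}^{s}$). Write $W = M - Int(N(\gamma))$, so that $M$ and $N$ are the two Dehn fillings of the common piece $W$, on which $\mathcal{L}^{s}$ and the flow $X|_{W}$ are common data. The idea is to compare the leaf space of $\widetilde{\mathcal{F}_N}$ with that of the stable foliation $\mathcal{F}^{s}$ of the pseudo-Anosov flow $X$, for which the orderability of cataclysms is available: Fenley \hyperref[F3]{[F3]} in the Anosov case, and the same argument — carried out on the orbit space of $X$, whose transverse unstable singular foliation furnishes the order on each cataclysm — in the pseudo-Anosov case.

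First I would take inventory of the cataclysms of $\widetilde{\mathcal{F}_N}$. Passing to the universal cover, all non-separation of leaves of $\widetilde{\mathcal{F}_N}$ is supported on the lifts of the finitely many complementary regions $R_1,\ldots,R_n$ (bundles of ideal polygons with an even number of sides over $S^{1}$) that were filled by monkey saddles: away from these lifts the leaf space of $\widetilde{\mathcal{F}_N}$ is carried by $\widetilde{\mathcal{L}^{s}}$, whose branching is exactly that of $\widetilde{\mathcal{F}^{s}}$. Inside a single lift $\widetilde{R_i}$, the monkey-saddle foliation contributes a family of pairwise non-separated leaves indexed by the prongs of the former singular leaf $l_i$, and I would check that each such family constitutes one cataclysm of $\widetilde{\mathcal{F}_N}$. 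The hypothesis $\Delta(r_i,s_i)=1$ is used precisely here: because the filling curve $r_i$ meets the prong slope $s_i$ exactly once, the solid torus glued in to form $N$ meets $R_i$ so that the monodromy of the resulting polygon bundle changes only by a symmetry of the ideal polygon that preserves its co-oriented central ``star''; consequently the local combinatorics of the monkey-saddle leaf space inside $\widetilde{R_i}$, and the cataclysm it yields, are of the same type as those appearing for $\widetilde{\mathcal{F}^{s}}$ around $\gamma_i$. (If $\Delta(r_i,s_i)\geqslant 2$ the prongs would no longer close up compatibly under the new gluing, and the branching need not be orderable, which is why the statement is restricted to $\Delta(r_i,s_i)=1$.)

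Finally I would order the cataclysms. Splitting the unstable foliation $\mathcal{F}^{u}$ of $X$ open along its singular leaves and refilling it in $N$ by monkey saddles yields, in parallel with $\mathcal{F}_N$, an unstable-type taut foliation $\mathcal{F}^{u}_N$ playing the role of the transverse foliation away from the filled tori; reading off the order of a cataclysm of $\widetilde{\mathcal{F}_N}$ from the linearly ordered transverse leaf of $\widetilde{\mathcal{F}^{u}_N}$ — exactly as in the orbit-space proof for $X$, using the identification of local cataclysm types from the previous paragraph to deal with the finitely many cataclysms coming from the filled tori — gives a linear order on every cataclysm, and one checks it is invariant under $\pi_1(N)$: the stabilizer in $\pi_1(N)$ of a lift of $R_i$ acts through a symmetry of the corresponding finite cataclysm which, again by $\Delta(r_i,s_i)=1$, respects this order, while on the remaining cataclysms invariance is inherited from $X$. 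Theorem \ref{orderable cataclysm} then gives that $\pi_1(N)$ is left orderable, proving Proposition \ref{Dehn filling}. (Alternatively, one may hope to show directly that $\mathcal{F}_N$ is, up to isotopy, the stable foliation of a pseudo-Anosov flow on $N$ obtained from $X$ by a Dehn surgery along $\gamma$ that $\Delta(r_i,s_i)=1$ makes realizable, and then invoke the pseudo-Anosov case of \hyperref[F3]{[F3]} directly.) The main obstacle is the middle paragraph together with the equivariance check: since $\pi_1(M)$ and $\pi_1(N)$ are different groups, the comparison with $\mathcal{F}^{s}$ cannot be effected by a single equivariant homeomorphism, so one must argue locally — on lifts of $W$ and of the $R_i$ — controlling precisely how the two Dehn fillings reassemble these pieces and, in particular, pinning down the combinatorial type of the monkey-saddle leaf space around a filled solid torus.
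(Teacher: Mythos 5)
Your proposal matches the paper's proof in all essentials: the same splitting/monkey-saddle construction of the foliation, the same dichotomy between cataclysms supported on a single filled complementary region and all others, the use of $\Delta(r_i,s_i)=1$ to control the stabilizer of a filled region, and the transverse (split-open) unstable foliation to linearly order the remaining cataclysms before invoking Theorem \ref{orderable cataclysm}. The one sharpening worth noting is that in the paper's first case the hypothesis $\Delta(r_i,s_i)=1$ forces the cyclic stabilizer of a lift $\widetilde{R_i}$ to act as the \emph{identity} on the finitely many boundary leaves (a nontrivial cyclic permutation cannot preserve any linear order), so any order works there and the unstable-foliation recipe is only needed for the remaining cataclysms, which the paper handles intrinsically on $N$ via the induced flow and singular foliations $\mathcal{E}^{s},\mathcal{E}^{u}$ rather than by local comparison with $X$ on $M$.
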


\begin{remark}\rm
	Zung (\hyperref[Zu]{[Zu]}) proves that many $3$-manifolds with co-orientable pseudo-Anosov flows have
	left orderable fundamental group.
	Let $\Sigma$ be a closed orientable surface and
	$\phi: \Sigma \to \Sigma$ be a pseudo-Anosov homeomorphism. 
	Assume that 
	the invariant foliations of $\phi$ 
	are co-orientable and that
	$\phi$ preserves their co-orientations.
	Let $N = \Sigma \times S^{1} / \stackrel{\phi}{\sim}$ be 
	the mapping torus of $\phi$,
	and let $X$ be the suspension pseudo-Anosov flow.
	Given a union of closed orbits of $X$ (denoted $\gamma$) and
	a slope on each component of $\gamma$ such that all these slopes have the same sign,
	then the Dehn filling (along $\gamma$) with these slopes has left orderable fundamental group.
	In Proposition \ref{Dehn filling},
	every $r_i$ can be either positive or negative,
	so the resulting manifolds in Proposition \ref{Dehn filling} are not all contained in Zung's result.
\end{remark}

\subsection{Organization}

This paper is organized as follows:

In Section \ref{section 2},
we provide the settings, terminologies,
and backgrounds which are necessary in our discussions.

We prove Theorem \ref{orderable cataclysm} in 
Subsection \ref{subsection 3.1}$\sim$Subsection \ref{subsection 3.2}.
In Subsection \ref{subsection 3.3},
we describe a construction of a left-invariant order of $\pi_1(M)$.
We give a proof for Proposition \ref{Dehn filling} in Subsection \ref{subsection 3.4},

\subsection{Acknowledgements}

The author is indebted to Xingru Zhang for 
many helpful discussions and conversations,
and for his guidance and encouragement.
The author is grateful to Sergio Fenley for the valuable advice and comments.
The author is grateful to Jonathan Zung for reminding him that
Corollary \ref{Anosov} can be known from Thurston's universal circle action,
and the author is grateful to Chi Cheuk Tsang for helpful comments.

\section{Preliminary}\label{section 2}

\subsection{Conventions}

For a set $X$,
we denote by $|X|$ the cardinality of $X$.

For a group $G$ acting on a space $X$,
we will always assume that $G$ acts on $X$ by the left multiplication,
i.e. 
$hg = h \circ g: X \stackrel{g}{\to} X \stackrel{h}{\to} X$ for any
$g,h \in G$.
And we denote by $Stab_G(t)$ the stabilizer of $t$ in $G$,
for every $t \in X$.

\subsection{Taut foliations and group actions on $1$-manifolds}

\begin{nota}\rm
	Suppose that $M$ admits a taut foliation $\mathcal{F}$.
	We will always denote by $\widetilde{\mathcal{F}}$ 
	the pull-back foliation of $\mathcal{F}$ in $\widetilde{M}$,
	and we will always denote by $L(\mathcal{F})$ the leaf space of $\widetilde{\mathcal{F}}$.
	The deck transformations on $\widetilde{M}$ induce an action of $\pi_1(M)$ on $L(\mathcal{F})$,
	called the \emph{$\pi_1$-action} on $L(\mathcal{F})$.
\end{nota}

Note that $L(\mathcal{F})$ is an orientable, connected, simply connected, possibly non-Hausdorff $1$-manifold.
$\widetilde{\mathcal{F}}$ is always co-orientable 
(whether $\mathcal{F}$ is co-orientable or not),
and the orientation on $L(\mathcal{F})$ is induced from
a co-orientation on $\widetilde{\mathcal{F}}$.
If $\mathcal{F}$ is co-orientable,
then the $\pi_1$-action on $L(\mathcal{F})$ is orientation-preserving.
We refer to \hyperref[HR]{[HR]}, \hyperref[Ba2]{[Ba2]} for some references on non-Hausdorff 1-manifolds and
their connections to taut foliations.

In the remainder of this subsection,
we will always assume that $M$ admits a taut foliation $\mathcal{F}$ and
$L = L(\mathcal{F})$.
And we assume further that $L$ is non-Hausdorff.
Now we describe the non-Hausdorff places in $L$
(see Figure \ref{1-manifold} (a) for the local model of such places),
which basically follows from \hyperref[F3]{[F3]} and \hyperref[CD]{[CD, 3.4]}.
We use the term \emph{cataclysm} in \hyperref[CD]{[CD]},
which has the same meaning as the term \emph{branching leaf} in \hyperref[F3]{[F3]}.

\begin{defn}\rm\label{cataclysm}
	Let $\mu$ be a set of points in $L$ with $|\mu| > 1$.
    We call $\mu$ a \emph{cataclysm} of $L$ if:
    
    (1)
    For any $u,v \in \mu$ which are distinct,
    $u, v$ can not be separated by any point in $L -\{u,v\}$.
    
    (2)
    There is no $x \in L - \mu$ such that
    $\mu \cup \{x\}$ satisfies (1).
\end{defn}

Definition \ref{cataclysm} (1) is equivalent to the condition that,
if we choose
an arbitrary neighborhood $U$ of $u$ in $L$ and
an arbitrary neighborhood $V$ of $v$ in $L$,
then
$U \cap V \ne \emptyset$.

\begin{figure}\label{1-manifold}
	\centering
	\subfigure[]{
		\includegraphics[width=0.3\textwidth]{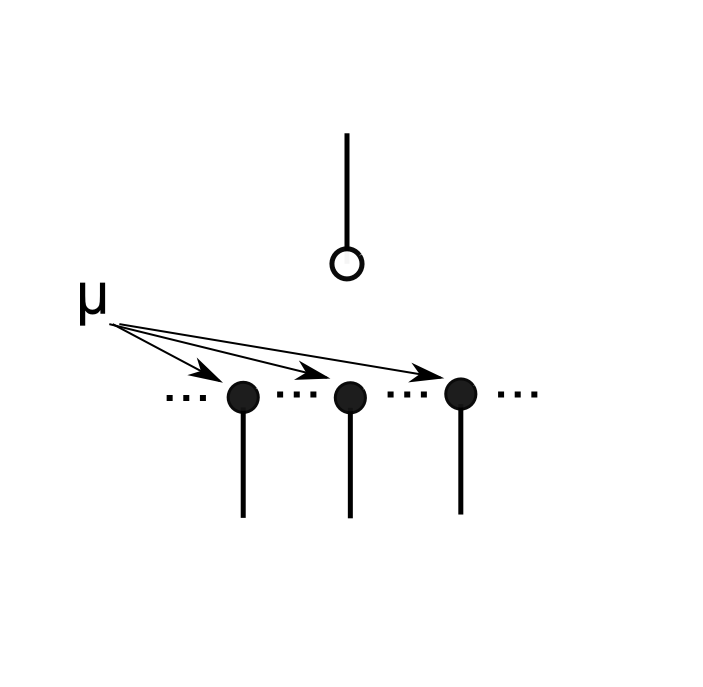}}
	\subfigure[]{
		\includegraphics[width=0.3\textwidth]{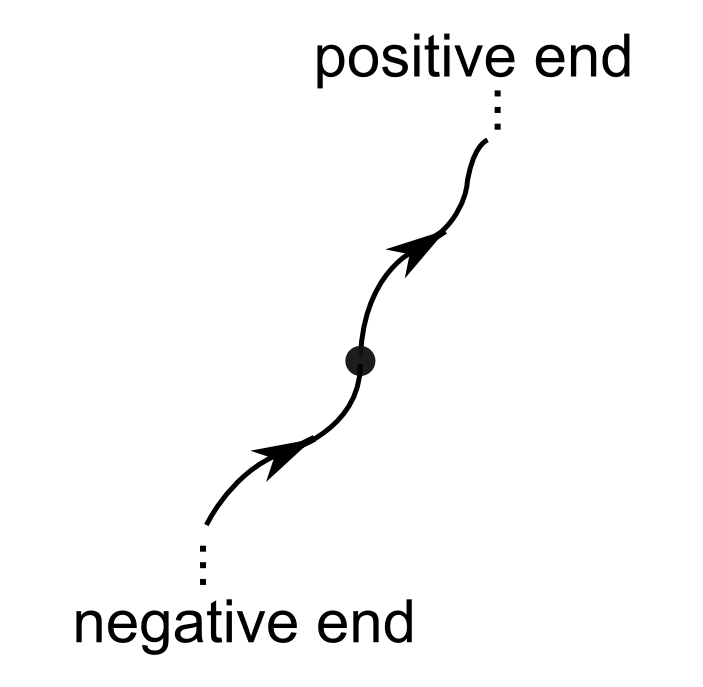}}
	\caption{Suppose that $L = L(\mathcal{F})$ for some taut foliation $\mathcal{F}$ and
	$L$ is non-Hausdorff.
	Picture (a) is the local model of a cataclysm $\mu$ of $L$.
	Fix an orientation on $L$,
	picture (b) describes a positive end and a negative end of $L$:
	the positive end can be represented by a positively oriented ray of $L$,
	and
	the negative end can be represented by a negatively oriented ray of $L$.}
\end{figure}

Figure \ref{1-manifold} (a) is an example of a cataclysm.
Note that for any cataclysm $\mu$ of $L$,
there is a sequence of points $\{t_i\}_{i \in \mathbb{N}}$ in $L$ which
converges to all points in $\mu$ simultaneously,
i.e.
$\mu = \overline{\{t_i \mid i \in \mathbb{N}\}} - \{t_i \mid i \in \mathbb{N}\}$.
See Definition \ref{upward cataclysm} and Remark \ref{remark on upward cataclysm} in
Subsection \ref{subsection 3.1} for more information about cataclysms of $L$.

\begin{defn}\rm\label{cataclysm for foliations}
A \emph{cataclysm} of $\widetilde{\mathcal{F}}$ is
a union of leaves of $\widetilde{\mathcal{F}}$ whose corresponding points in $L$
form a cataclysm in $L$.
\end{defn}

Henceforth,
we will not distinguish between leaves of $\widetilde{\mathcal{F}}$ and
points in $L$.
Any cataclysm of $\widetilde{\mathcal{F}}$ can also be considered as 
a cataclysm of $L$.

Note that any self homeomorphism of $L$ sends a cataclysm to a cataclysm.
	
\begin{defn}\rm\label{orderable cataclysm definition}
	Let $\{g: L \to L \mid g \in G\}$ be an action of a group $G$ on $L$ via homeomorphisms
	(which may not be orientation-preserving).
	We say that $L$ has \emph{orderable cataclysm} 
	(with respect to the action $\{g: L \to L \mid g \in G\}$) if:
	
	$\bullet$
	For any cataclysm $\mu$ of $L$,
	there is a linear order $\stackrel{_\mu}{<}$ on $\mu$ which 
	is preserved by $Stab_G(\mu)$.
\end{defn}

\begin{defn}\rm\label{orderable cataclysm for foliations}
	We say that $\mathcal{F}$ has \emph{orderable cataclysm} if
	$L$ has orderable cataclysm with respect to the $\pi_1$-action on it.
\end{defn}

We may and will always have the following assumption:

\begin{assume}\rm\label{has orderable cataclysm}
	Suppose that $L$ has orderable cataclysm with respect to 
	an action $\{g: L \to L \mid g \in G\}$ of a group $G$ on $L$.
	We fix a linear order $\stackrel{_\mu}{<}$ on every cataclysm 
	$\mu$ of $L$ such that:
	for every $g \in G$ and arbitrary distinct points $p,q$ in $\mu$,
	$g(p) \stackrel{_{g(\mu)}}{<} g(q)$ if and only if $p \stackrel{_\mu}{<} q$.
\end{assume}

Note that not all taut foliations have orderable cataclysm
(see \hyperref[CD]{[CD, Example 3.7]} for an example).
However,
Fenley proves the following theorem in \hyperref[F3]{[F3]}
(see \hyperref[CD]{[CD, Example 3.6]} for an explanation using the terminologies that we adopt):

\begin{thm}[Fenley]\label{pseudo-anosov}
	The stable and unstable foliations of all Anosov flows have
	orderable cataclysm.
\end{thm}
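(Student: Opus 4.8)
Throughout, write $\mathcal{F}^{s},\mathcal{F}^{u}$ for the weak stable and unstable foliations of the Anosov flow $X$ on $M$. Since by \hyperref[F2]{[F2]} each of $\mathcal{F}^{s},\mathcal{F}^{u}$ is either $\mathbb{R}$-covered — in which case its leaf space is $\mathbb{R}$, has no cataclysm at all, and there is nothing to prove — or has two-sided branching, the content is entirely in the branching case, and there the plan is to transfer everything to the orbit space of $X$, where Fenley's theory of lozenges makes the branching explicit. Lifting $X$ to $\widetilde{M}$ and collapsing flow lines produces the orbit space $\mathcal{O}\cong\mathbb{R}^{2}$, equipped with two transverse nonsingular $1$-dimensional foliations $\mathcal{O}^{s},\mathcal{O}^{u}$ induced by $\widetilde{\mathcal{F}^{s}},\widetilde{\mathcal{F}^{u}}$; the deck action of $\pi_{1}(M)$ descends to $\mathcal{O}$ preserving $\mathcal{O}^{s}$ and $\mathcal{O}^{u}$, and $L(\mathcal{F}^{s})$ is $\pi_{1}(M)$-equivariantly the leaf space of $\mathcal{O}^{s}$ in $\mathcal{O}$ (and similarly for $\mathcal{F}^{u}$). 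A point I would stress is that $\mathcal{O}$ carries a \emph{canonical} orientation: $\widetilde{M}$ is oriented (since $M$ is) and $T\widetilde{X}$ is oriented by the flow direction of $X$, so $T\mathcal{O}\cong T\widetilde{M}/T\widetilde{X}$ inherits an orientation which is $\pi_{1}(M)$-invariant with \emph{no} co-orientability hypothesis on $\mathcal{F}^{s}$ or $\mathcal{F}^{u}$. By the symmetry $s\leftrightarrow u$ (pass to the time-reversed flow) it suffices to produce, for every cataclysm $\mu$ of $L(\mathcal{F}^{s})$, a $\mathrm{Stab}_{\pi_{1}(M)}(\mu)$-invariant linear order; and cataclysms of $L(\mathcal{F}^{s})$ are exactly the maximal families of pairwise non-separated leaves of $\mathcal{O}^{s}$.

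Next I would invoke Fenley's structure theorem for branching of Anosov flows (\hyperref[F3]{[F3]}): if two leaves of $\mathcal{O}^{s}$ are non-separated, each contains the lift of a periodic orbit of $X$, and these periodic orbits are joined by a chain of lozenges; consequently a cataclysm $\mu$ is precisely the set of stable leaves through the corner orbits of one "parity class" of a maximal chain of lozenges. For the corner orbits $q$ whose stable leaf lies in $\mu$ there is a single element $\gamma\in\pi_{1}(M)$ — canonically normalized by the flow direction of $X$, so that $\gamma$ contracts the stable prongs of these $q$ toward $q$ and expands the unstable prongs — with $g_{q}=\gamma$ for every such $q$. Since consecutive corners of a chain lie in opposite quadrants, the chain is monotone, and combining the canonical orientation of $\mathcal{O}$ with the contraction/expansion behaviour of $\gamma$ determines a direction in which to traverse the chain from one end to the other; this yields a linear order on the corner orbits, hence on $\mu$. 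The essential feature is that the order has been built only from the tuple $(\mathcal{O},\mathcal{O}^{s},\mathcal{O}^{u},\text{orientation of }\mathcal{O},\text{flow direction of }X)$, every ingredient of which is canonical; therefore any homeomorphism of $\mathcal{O}$ respecting these structures preserves the order, and in particular so does $\mathrm{Stab}_{\pi_{1}(M)}(\mu)$ — which is the invariance demanded by Definition \ref{orderable cataclysm definition}.

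The two places where real work is needed are the structural input and the canonicity check, and I expect the latter to be the main obstacle. The structure theorem — that a cataclysm of the stable leaf space is genuinely the corner set of a chain of lozenges with a common, flow-normalized stabilizer generator $\gamma$, rather than some more complicated configuration — is exactly what \hyperref[F3]{[F3]} supplies, so I would quote it. The delicate point is to verify that the order is independent of auxiliary choices (a sequence $\{t_{i}\}$ converging simultaneously to all leaves of $\mu$, choices of transversals, and so on) and, relatedly, that no element of $\mathrm{Stab}_{\pi_{1}(M)}(\mu)$ reverses or cyclically rotates the chain: an order-reversing $h$ would either fix a central corner of the chain — forcing $h\in\langle\gamma\rangle$, whence $h$ fixes \emph{every} corner, a contradiction — or else be a proper root of a power of $\gamma$ permuting the corners nontrivially, a possibility one expects to exclude by examining the contraction structure of $\gamma$ at the central corner or lozenge together with the flow-direction normalization of $\gamma$ (this is where Fenley's analysis of freely homotopic orbits and the dynamics of the $\pi_{1}(M)$-action on $\mathcal{O}$ does the work). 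Once this rigidity is in place the order on each cataclysm is canonical, hence automatically $\pi_{1}(M)$-equivariant across all cataclysms, and Theorem \ref{pseudo-anosov} follows for $\mathcal{F}^{s}$, and by the same argument applied to the time-reversed Anosov flow, for $\mathcal{F}^{u}$.
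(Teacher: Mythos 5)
You have correctly identified the two load-bearing ingredients (Fenley's lozenge structure theorem and an invariance check), but the proof as written has a genuine gap at exactly the step that carries all the content of the theorem. On a countable set a linear order always exists, so Theorem \ref{pseudo-anosov} is entirely about $\mathrm{Stab}_{\pi_1(M)}(\mu)$-invariance; your argument for that invariance reduces to the claim that no stabilizer element reverses or rotates the chain of lozenges, and this is precisely the point you leave as ``a possibility one expects to exclude by examining the contraction structure of $\gamma$\dots (this is where Fenley's analysis \dots does the work).'' That is not a proof. Worse, the normalization you propose is not obviously sufficient: a chain of lozenges sits in $\mathcal{O}\cong\mathbb{R}^2$ as a staircase, and reversing the traversal direction is realized by a $180^\circ$ rotation, which \emph{preserves} the orientation of $\mathcal{O}$; so the canonical orientation of the orbit space alone does not single out a direction along the chain, and the appeal to ``the contraction/expansion behaviour of $\gamma$'' is never turned into an actual argument. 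Until that rigidity is established, no invariant order has been produced.

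It is also worth noting that the route the paper actually takes (it quotes [F3] for the theorem itself, but its own proof of the analogous Lemma \ref{orderable}, Case \ref{case2 2}, follows \hyperref[CD]{[CD, Example 3.6]}) avoids the lozenge machinery entirely and makes the invariance immediate. One takes a sequence of leaves converging to all leaves of $\mu$ simultaneously from one side (this side is intrinsic to $\mu$, being a positive or negative cataclysm), picks unstable leaves $\widetilde{u_i}$ through the leaves of $\mu$, and orders $\mu$ by the order in which a nearby common stable leaf $\widetilde{l}$ meets the $\widetilde{u_i}$, using the transverse orientation of the flow foliation $\Psi(\widetilde{l})$. Every ingredient of that order (the flow direction, the orientation data, the side of approach) is visibly preserved by deck transformations, so invariance is a one-line observation rather than a rigidity statement about chains of lozenges. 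Your structural description of cataclysms as parity classes of corners of a chain is consistent with [F3] and could be made to work, but you would need to either carry out the excluded-reversal argument in full or replace it with the transversal comparison above; as it stands the decisive step is asserted, not proved.
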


\subsection{Ends of $1$-manifolds}

Throughout this subsection,
we assume that $M$ admits a taut foliation $\mathcal{F}$ and
$L = L(\mathcal{F})$.

By a \emph{ray} of $L$ we mean an embedding $r: [0,+\infty) \to L$
such that 
there is no embedding $f: [0,+\infty) \to L$ with $f(0) = r(0)$ and 
$r([0,+\infty)) \subsetneqq f([0,+\infty))$.
Let $\mathcal{E} = \{\text{rays of } L\}$.
Let $\sim$ be the equivalence relation on $\mathcal{E}$ such that:
for arbitrary two rays $r_1,r_2: [0,+\infty) \to L$,
$r_1,r_2$ are \emph{equivalent} if there are 
$t_1,t_2 \in [0,+\infty)$ with $r_1([t_1,+\infty)) = r_2([t_2,+\infty))$.
Let
$$End(L) = \mathcal{E} / \sim,$$
and we call each element of $End(L)$ an \emph{end} of $L$.
For an end $t$ of $L$ and a ray $r: [0,+\infty) \to L$ which
represents $t$,
we may assume that $t$ is identified with $r(+\infty)$.

\begin{defn}\rm
	Fix an orientation on $L$.
	For every $t \in End(L)$,
	we call $t$ a \emph{positive end} (resp. \emph{negative end}) of $L$ if
	there is a ray $r: [0,+\infty) \to L$ such that
	$r(+\infty) = t$ and the increasing orientation on $r([0,+\infty))$ 
	is consistent with (resp. opposite to) the orientation on $L$,
	see Figure \ref{1-manifold} (b).
	And we denote by 
	$$End_+(L) = \{\text{positive ends of } L\},$$
	$$End_-(L) = \{\text{negative ends of } L\}.$$
\end{defn} 

Because $L$ is a $1$-manifold,
the increasing orientation on an embedded ray in $L$ is necessarily either consistent with or opposite to
the orientation on $L$.
Thus any end of $L$ is either positive or negative.

\begin{remark}\rm
	Let $\{g: L \to L \mid g \in G\}$ be an action of a group $G$ on $L$ via homeomorphisms.
	Then there is an induced action of $G$ on $End(L)$.
	Moreover,
	if the action of $G$ on $L$ is orientation-preserving,
	then every $g \in G$ sends
	positive ends to positive ends and
	sends negative ends to negative ends,
	and therefore $G$ restricts to an action on $End_+(L)$
	(and also restricts to an action on $End_-(L)$).
\end{remark}

The following theorem is implicitly contained in the proof of \hyperref[CD]{[CD, Theorem 3.8]}:

\begin{thm}[Calegari-Dunfield]\label{C-D}
	Let $\{g: L \to L \mid g \in G\}$ be an action of a group $G$ on $L$ via homeomorphisms.
	Suppose that 
	$L$ has orderable cataclysm with respect to the action of $G$.
	Then there is a circular order on $End(L)$ which is preserved by the action of $G$.
\end{thm}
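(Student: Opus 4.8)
The plan is to promote $L$ to a \emph{planar order tree} and then read the desired circular order off of its space of ends. Concretely, I would use the orientation of $L$ (equivalently, the co-orientation of $\widetilde{\mathcal{F}}$) together with the chosen orders $\stackrel{_\mu}{<}$ from Assumption \ref{has orderable cataclysm} to define a $G$-equivariant \emph{planar structure} on $L$: a rule assigning to each point $p\in L$ a cyclic order on the set of germs of $L$ at $p$. At a Hausdorff point there are exactly two germs, so nothing must be chosen. At a cataclysm $\mu$ (Figure \ref{1-manifold}(a)) there is one distinguished germ $s_\mu$ lying on the non-separated side --- the side carrying a sequence converging to every leaf of $\mu$ at once --- together with one further germ for each leaf $u\in\mu$; I declare the cyclic order of germs to be $s_\mu$ followed by the remaining germs read in increasing $\stackrel{_\mu}{<}$-order. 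One must check against the local model that the germs at a cataclysm are exactly these and that $s_\mu$ is intrinsically defined. This planar structure is $G$-invariant: each $g\in G$, being a homeomorphism, carries $s_\mu$ to $s_{g(\mu)}$, and by Assumption \ref{has orderable cataclysm} it carries $\stackrel{_\mu}{<}$ to $\stackrel{_{g(\mu)}}{<}$; if $g$ happens to reverse the orientation of $L$, then it simultaneously interchanges the designation of $s_\mu$ and the left/right sense at \emph{every} cataclysm, which is a net orientation-preserving modification, so $g$ still respects the structure (and, on ends, interchanges $End_+(L)$ and $End_-(L)$).

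Next I would extract the circular order on $E := End(L) = End_+(L)\sqcup End_-(L)$ from the planar structure. Given three distinct ends $e_0,e_1,e_2$ with representing rays $r_0,r_1,r_2$, for all large enough parameter the three ray-tails are pairwise disjoint and the rays ``branch apart'' in $L$ through a finite configuration of cataclysms (a ``tripod'' whose vertices are cataclysms, together with the $\pm\infty$ directions of $L$); the germ-cyclic-orders at those vertices determine a sign, i.e. a cyclic ordering of $\{e_0,e_1,e_2\}$, which I take as the value of the circular-order cocycle. It then remains to check: (i) antisymmetry under transpositions of the $e_i$; (ii) independence of the chosen rays and parameter --- any two rays representing the same end eventually coincide, and passing further out only refines, never alters, the recorded branching pattern; and (iii) the cocycle identity for any four ends $e_0,e_1,e_2,e_3$, which reduces to the purely combinatorial fact that four leaves of a planar tree are consistently cyclically ordered. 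Granting all this, $G$-equivariance is immediate from the first paragraph: $g$ carries the branching configuration of $(e_0,e_1,e_2)$ to that of $(g(e_0),g(e_1),g(e_2))$, so the resulting circular order on $End(L)$ is preserved by $G$.

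The main obstacle is the global coherence underlying the second step: formalising the ``branching apart'' of a triple --- and a quadruple --- of ends inside the non-Hausdorff $1$-manifold $L$, which carries no honest arcs or medians between arbitrary points, and then verifying that the recorded data is independent of all choices and satisfies the circular-order cocycle. This is precisely the content one can distill from the proof of \hyperref[CD]{[CD, Theorem 3.8]}: a planar order tree structure on the leaf space yields a circular order on its space of ends, and the orderable-cataclysm hypothesis is exactly what makes that planar structure --- hence the circular order built from it --- invariant under $G$.
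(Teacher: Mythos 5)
Your proposal is essentially the paper's own argument: the paper gives no independent proof of Theorem \ref{C-D}, but simply observes that the construction in the proof of \hyperref[CD]{[CD, Theorem 3.8]} goes through verbatim once one drops the requirement that $G$ act on $End(L)$ effectively, and your sketch is precisely an outline of that construction (a $G$-invariant planar/cyclic structure at each branch point, built from the non-separated side together with the orders $\stackrel{_\mu}{<}$, inducing a circular order on ends). The technical points you defer --- well-definedness of the branching configuration of a triple of ends and the cocycle identity --- are exactly the content of [CD] that the paper likewise takes as given, so the two routes coincide.
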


It follows that

\begin{cor}\label{corollary C-D}
	Assume that the conditions of Theorem \ref{C-D} hold.
	Then for arbitrary $x \in End(L)$,
	there is a linear order on $End(L) - \{x\}$ which
	is preserved by $Stab_G(x)$.
\end{cor}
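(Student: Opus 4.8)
The plan is to deduce this directly from Theorem \ref{C-D} by the standard ``cut'' construction that turns a circular order into a linear order once a base point is removed. First I would recall the bookkeeping for circular orders: a circular order on a set $S$ may be encoded by an orientation cocycle $c \colon S^3 \to \{-1,0,1\}$ that vanishes exactly on the degenerate triples (those with a repeated entry), is alternating under transposition of any two entries, is non-zero on every triple of pairwise distinct points, and satisfies the cocycle relation; a self-homeomorphism of $L$ that preserves the circular order on $End(L)$ is precisely one that preserves $c$. By Theorem \ref{C-D}, since $L$ has orderable cataclysm with respect to the $G$-action, there is such a cocycle $c$ on $S = End(L)$ that is invariant under the induced $G$-action on $End(L)$.

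Next, I would fix $x \in End(L)$ and define a relation $\stackrel{x}{<}$ on $End(L) - \{x\}$ by $y \stackrel{x}{<} z$ if and only if $c(x,y,z) = 1$, and check that this is a strict linear order. Irreflexivity is clear since $c$ vanishes on degenerate triples; for distinct $y,z \in End(L) - \{x\}$ exactly one of $y \stackrel{x}{<} z$ or $z \stackrel{x}{<} y$ holds because $c(x,y,z) \ne 0$ and $c(x,z,y) = -c(x,y,z)$ by the alternating property; transitivity of $\stackrel{x}{<}$ follows from the cocycle identity relating $c(x,y,z)$, $c(x,z,w)$, $c(x,y,w)$, and $c(y,z,w)$. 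Thus $\stackrel{x}{<}$ is a genuine linear order on $End(L) - \{x\}$.

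Finally I would verify that $Stab_G(x)$ preserves $\stackrel{x}{<}$: for $g \in Stab_G(x)$ and distinct $y,z \in End(L) - \{x\}$ we have $g(x) = x$ in $End(L)$, and since $g$ preserves $c$ we get $c(x, g(y), g(z)) = c(g(x), g(y), g(z)) = c(x,y,z)$, so $y \stackrel{x}{<} z$ if and only if $g(y) \stackrel{x}{<} g(z)$. I do not expect a genuine obstacle here; the entire content of the corollary is carried by Theorem \ref{C-D}, and the only points requiring mild care are the routine verification that the cut relation is transitive (via the cocycle condition) and the trivial observation that an element of $Stab_G(x)$ fixes $x$ as a point of $End(L)$, so that the invariance of $c$ transfers verbatim to invariance of $\stackrel{x}{<}$.
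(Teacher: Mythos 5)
Your proposal is correct and matches the paper's intent exactly: the paper offers no written proof, introducing the corollary with ``It follows that'' and treating it as the standard fact that cutting a $G$-invariant circular order at a point $x$ yields a linear order on the complement preserved by $Stab_G(x)$. Your cocycle bookkeeping simply makes explicit the routine verification the paper leaves to the reader.
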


\begin{remark}\rm
	There is a slight difference between \hyperref[CD]{[CD, Theorem 3.8]} and Theorem \ref{C-D}.
	In \hyperref[CD]{[CD, Theorem 3.8]},
	$G$ is the fundamental group of an atoroidal $3$-manifold,
	and $L$ is the order tree associated to $\widetilde{\Lambda}$ for
	some very full genuine lamination $\Lambda$ with orderable cataclysm,
	where $\widetilde{\Lambda}$ denotes the pull-back lamination of $\Lambda$ in the universal cover.
	Calegari and Dunfield prove that
	$G$ acts on the set of ends of $L$ effectively,
	and this action preserves a circular order on the ends of $L$.
	The proof of \hyperref[CD]{[CD, Theorem 3.8]} applies to 
	the setting of Theorem \ref{C-D} since 
	we do not require that $G$ acts on $End(L)$ effectively.
\end{remark}

\subsection{The three types of taut foliations}

Every taut foliation has one of the following three types
(see \hyperref[C]{[C, Definition 4.41]} for example):

\begin{defn}\rm\label{three types}
	For any taut foliation $\mathcal{F}$ of $M$,
	$\mathcal{F}$ has one of the following three types:
	
	$\bullet$
	$\mathcal{F}$ is \emph{$\mathbb{R}$-covered} if
	$L(\mathcal{F})$ is homeomorphic to $\mathbb{R}$.
	
	$\bullet$
	$\mathcal{F}$ has \emph{one-sided branching} if,
	with respect to an orientation on $L(\mathcal{F})$,
	$L(\mathcal{F})$ either has exactly one positive end and more than one negative ends,
	or has exactly one negative end and more than one positive end.
	
	$\bullet$
	$\mathcal{F}$ has \emph{two-sided branching} if,
	with respect to an orientation on $L(\mathcal{F})$,
	$L(\mathcal{F})$ has more than one positive end and more than one negative end.
\end{defn}

We note that $L(\mathcal{F})$ has more than one positive end (resp. negative end) implies that
$L(\mathcal{F})$ has infinitely many positive ends (resp. negative ends).

\begin{lm}
	Let $\mathcal{F}$ be a taut foliation of $M$,
	and we fix an orientation on $L(\mathcal{F})$.
	
	(a)
	If $L(\mathcal{F})$ has more than one positive end,
	then $L(\mathcal{F})$ has infinitely many positive ends.
	
	(b)
	If $L(\mathcal{F})$ has more than one negative end,
	then $L(\mathcal{F})$ has infinitely many negative ends.
\end{lm}

\begin{proof}
	To be convenient,
	we only prove (a).
	Let $G = \pi_1(M)$.
	If $\mathcal{F}$ is not co-orientable,
	then $M$ has a double cover $M^{'}$ such that
	$\mathcal{F}$ pulls-back to a co-orientable taut foliation $\mathcal{F}^{'}$ in $M^{'}$,
	and $L(\mathcal{F}) = L(\mathcal{F}^{'})$ since
	$\widetilde{M}$ is also the universal cover of $M^{'}$ and
	$\widetilde{\mathcal{F}}$ is still the pull-back foliation of $\mathcal{F}^{'}$ in $\widetilde{M}$.
	Thus,
	we may assume that
	$\mathcal{F}$ is co-orientable.
	Then $G$ acts on $L(\mathcal{F})$ via orientation-preserving homeomorphisms.
	
	For each $t \in L(\mathcal{F})$,
	we define $$e(t) = \{a \in End_+(L(\mathcal{F})) \mid \text{there is a ray from } t \text{ to } a\}.$$
	We first claim that there exists $s \in L(\mathcal{F})$ with
	$|e(s)| > 1$.
	
	Suppose otherwise that $|e(t)| = 1$ for all $t \in L(\mathcal{F})$.
	For any $p, q \in L(\mathcal{F})$ such that
	there is a positively oriented path in $L(\mathcal{F})$ from $p$ to $q$,
	we have $e(q) \subseteq e(p)$ and thus $e(q) = e(p)$.
	Let $e_1, e_2$ be distinct positive ends of $L(\mathcal{F})$,
	and let $x, y \in L(\mathcal{F})$ with $e(x) = \{e_1\}, e(y) = \{e_2\}$.
	Note that there are $x_0, x_1, \ldots, x_n \in L(\mathcal{F})$ such that
	$x_0 = x$, $x_n = y$,
	and there is an embedded path in $L(\mathcal{F})$ between $x_i, x_{i+1}$ for every $0 \leqslant i \leqslant n-1$
	(see, for example, \hyperref[GO]{[GO, Construction of tree, method II, Definition 6.9 (3)]}).
	Then $\{e_1\} = e(x_0) = e(x_1) = \ldots = e(x_n) = \{e_2\}$.
	This is a contradiction.
	
	Thus,
	there exists $s \in L(\mathcal{F})$ with $|e(s)| > 1$.
	Let $a, b$ be distinct elements of $e(s)$.
	Let $r_a, r_b$ denote the two rays from $s$ that represents $a, b$ respectively,
	and we choose $u \in r_b - r_a$.
	Then $b \in e(u)$, but $a \notin e(u)$.
	Since $\mathcal{F}$ is a taut foliation,
	there is a simple closed curve in $M$ transverse to $\mathcal{F}$ that intersects all leaves of $\mathcal{F}$.
	This condition implies that
	there is a positively oriented path $\gamma$ in $L(\mathcal{F})$ from $u$ to 
	certain point in the orbit of $u$ (under the $\pi_1$-action) such that its interior contains 
	certain point $s^{'}$ in the orbit of $s$ (under the $\pi_1$-action).
	Let $g \in G$ with $s^{'} = g(s)$.
	Note that $g: L(\mathcal{F}) \to L(\mathcal{F})$ is an orientation-preserving homeomorphism.
	Let $n \in \mathbb{N}$.
	Since $a \in e(s)$ and $g^{n}(a)$ is still a positive end of $L(\mathcal{F})$,
	$g^{n}(a) \in e(g^{n}(s))$.
	Because there is a positively oriented path from $s$ to $u$ and
	a positively oriented path from $u$ to $s^{'} = g(s)$,
	there is a positively oriented path from $g^{i}(s)$ to $g^{i}(u)$ and
	from $g^{i}(u)$ to $g^{i+1}(s)$ for all $i \in \mathbb{Z}$.
	So there is a positively oriented path from $u$ to $g^{n}(s)$ which
	passes through $u, g(s), g(u), \ldots, g^{n-1}(u), g^{n}(s)$.
	It can be deduced from $a \notin e(u)$ that $a \notin e(g^{n}(s))$.
	Combined with $g^{n}(a) \in e(g^{n}(s))$,
	we have $g^{n}(a) \ne a$.
	Hence the positive ends $a, g(a), g^{2}(a), \ldots$ of $L(\mathcal{F})$ are distinct from each other.
\end{proof}

\section{The proof of the main theorems}\label{section 3}

In this section,
we prove Theorem \ref{orderable cataclysm} in Subsection \ref{subsection 3.1}$\sim$\ref{subsection 3.2},
and we describe the construction of a left-invariant order in Subsection \ref{subsection 3.3}.
We prove Proposition \ref{Dehn filling} in Subsection \ref{subsection 3.4}.

Throughout Subsection \ref{subsection 3.1}$\sim$\ref{subsection 3.3},
we have the following assumption:

\begin{assume}\rm\label{F}
$M$ admits 
a co-oriented taut foliation $\mathcal{F}$ which has orderable cataclysm.
\end{assume}

Let $G = \pi_1(M)$,
$L = L(\mathcal{F})$,
and we denote by 
$\{g: L \to L \mid g \in G\}$ the $\pi_1$-action on $L$.
Note that $L$ has orderable cataclysm with respect to $\{g: L \to L \mid g \in G\}$.
As in Assumption \ref{has orderable cataclysm},
every cataclysm $\mu$ of $L$ has a linear order $\stackrel{_\mu}{<}$ which
is preserved by $Stab_G(\mu)$ and is consistent with
the order $\stackrel{_{g(\mu)}}{<}$ for all $g \in G$.

We assume that $L$ has an orientation induced from
the co-orientation on $\mathcal{F}$.
Note that $G$ acts on $L$ via orientation-preserving homeomorphisms.
Moreover,
we assume that $\mathcal{F}$ has two-sided branching,
since Theorem \ref{orderable cataclysm} holds directly when $\mathcal{F}$ is
either $\mathbb{R}$-covered or has one-sided branching
(see \hyperref[Zh]{[Zh]}). 

\subsection{Broken curves in $L$}\label{subsection 3.1}

At first,
we define the sign of every cataclysm in $L$.

\begin{defn}\rm\label{upward cataclysm}
	Let $\mu$ be a cataclysm of $L$ and 
	let $u, v$ be distinct points of $\mu$.
	Notice that $L - \{u\}$ has exactly two components.
	A component of $L - \{u\}$ is called the \emph{upward side}
	(resp. \emph{downward side}) of $u$ if
	there is a positively oriented (resp. negatively oriented) embedded path in $L$
	from $u$ to some point in this component.
	We call $\mu$ an \emph{upward cataclysm} (resp. \emph{downward cataclysm}) if
	$v$ is in the upward side (resp. the downward side) of $u$.
	See Figure \ref{upward and downward} (a), (b) for 
	an example of an upward cataclysm of $L$ and
	an example of a downward cataclysm of $L$, respectively.
\end{defn}

\begin{figure}\label{upward and downward}
	\centering
	\subfigure[]{
		\includegraphics[width=0.3\textwidth]{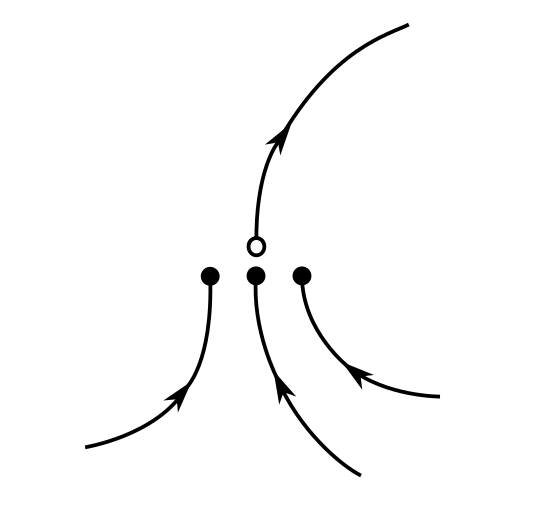}}
	\subfigure[]{
		\includegraphics[width=0.3\textwidth]{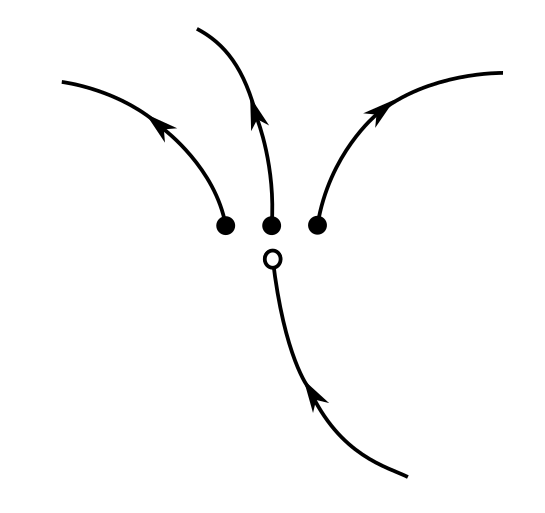}}
	\caption{(a) is the local model of an upward cataclysm in $L$ and 
		(b) is the local model of a downward cataclysm in $L$,
		where $L$ is labeled with the positive orientation.}
\end{figure}

We explain that Definition \ref{upward cataclysm} is independent of the choice of $u$ and $v$.
Because $u$ does not separate arbitrary two distinct points in $\mu - \{u\}$,
all points in $\mu - \{u\}$ are contained in the same side of $L - \{u\}$.
So the choice of $v$ does not affect the sign of $\mu$.
Notice that $u$ is in the upward side of $v$ if and only if
$v$ is in the upward side of $u$.
Thus, 
Definition \ref{upward cataclysm} is also independent of the choice of $u$.

We note that,
if $\mu$ is an upward (resp. downward) cataclysm in $L$,
then there is a sequence of points in $L$ that converges to all elements of $\mu$ simultaneously from
the upward side (resp. downward side) of all of them.

Every point of $L$ is contained in 
no more than one upward cataclysm and no more than one downward cataclysm,
but it may be contained in an upward cataclysm and a downward cataclysm simultaneously.
We explain this as follows.

\begin{remark}\rm\label{remark on upward cataclysm}
	Let $u \in L$ which is contained in some cataclysm.
	Let $a,b \in L$ such that 
	$a, u$ are both contained in a cataclysm and
	$b, u$ are also both contained in a cataclysm.
	Then every point in $L - \{a,b,u\}$ does not separate both $\{a,u\}$ and $\{b,u\}$,
	and thus does not separate $\{a,b\}$.
	It follows that
	$a,b$ are contained in the same cataclysm if and only if
	$u$ does not separate $\{a,b\}$,
	and therefore if and only if $a,b$ are contained in 
	the same side of $u$.
	Hence $u$ is contained in no more than one upward cataclysm and
	no more than one downward cataclysm.
	However,
	we can not exclude the case where
	$u$ is contained in an upward cataclysm and a downward cataclysm simultaneously.
\end{remark}

Let $u,v$ be distinct points in $L$.
Now we describe a uniquely defined ``broken path'' $\alpha(u,v)$ from $u$ to $v$,
which consists of all points in
$$\{t \in L - \{u,v\} \mid t \text{ separates } u \text{ and } v\} \cup \{u,v\}.$$
Our description of $\alpha(u,v)$ basically follows from \hyperref[CD]{[CD, 6.20]}
(compare with \hyperref[RS]{[RS, Definition 3.5, Theorem 3.6]}).

Fix $u,v$ given above.
There is a unique sequence of points $t_1,\ldots,t_n \in L$ such that
the following conditions hold (see Figure \ref{broken path} (a), (b)):

\begin{figure}\label{broken path}
	\centering
	\subfigure[]{
		\includegraphics[width=0.7\textwidth]{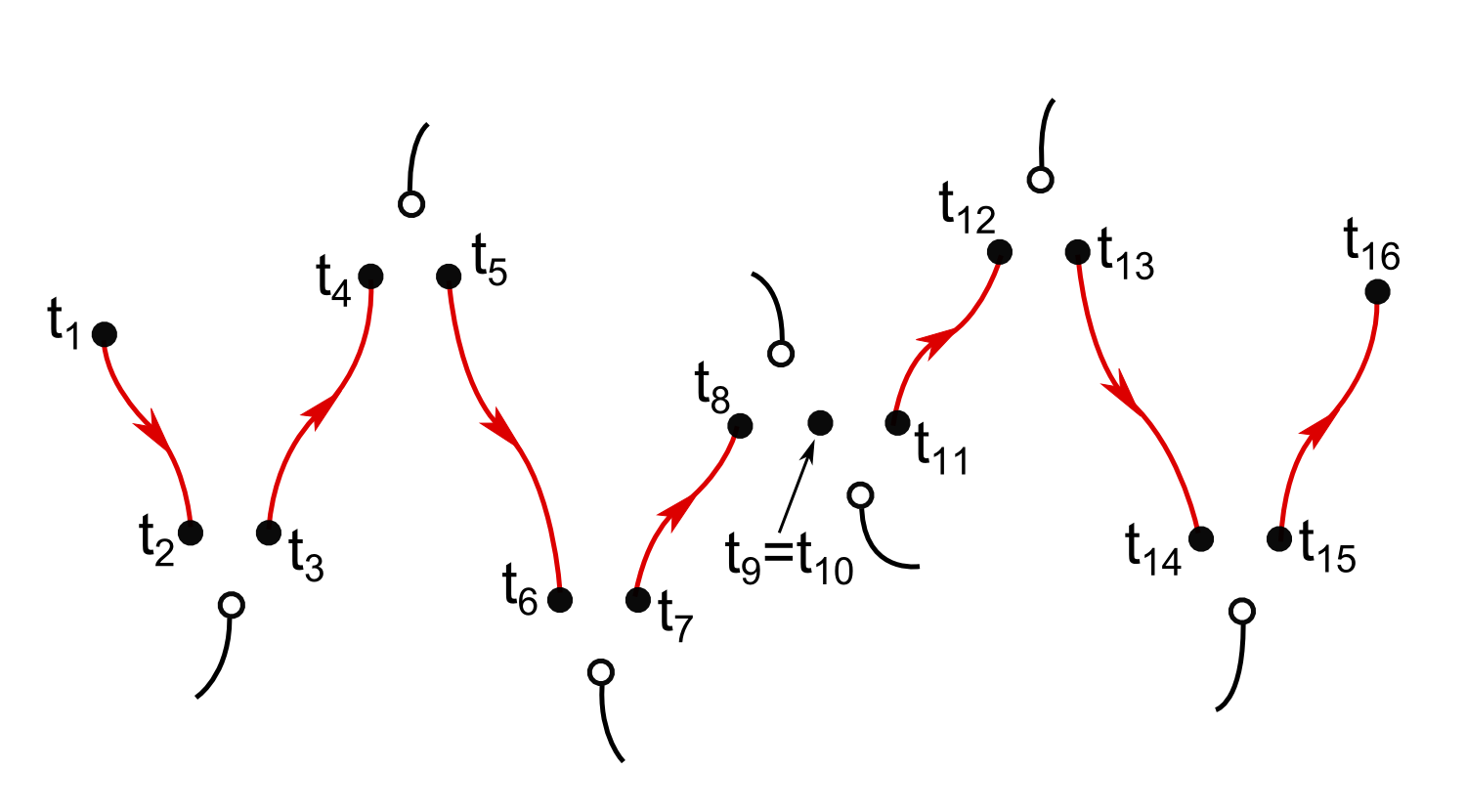}}
	\subfigure[]{
		\includegraphics[width=0.7\textwidth]{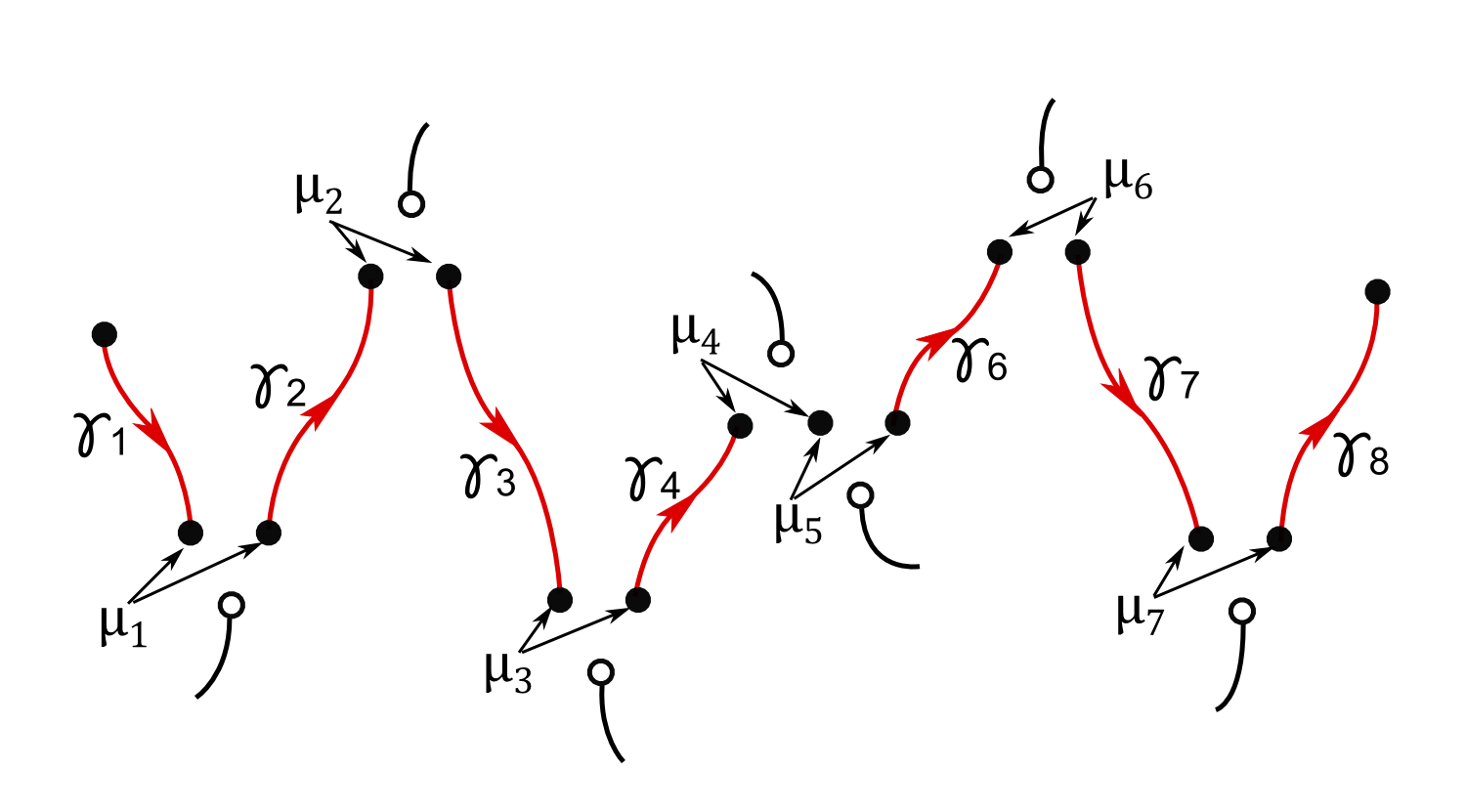}}
	\caption{We give an example of $\alpha(u,v)$ 
		(where $u = t_1$ and $v = t_{16}$ in the picture).
	The sequence of points $\{t_i\}_{1 \leqslant i \leqslant 16}$ is shown in picture (a),
	where $t_9 = t_{10}$.
	The paths $\{\gamma_k\}_{1 \leqslant k \leqslant 8}$ and 
	cataclysms $\{\mu_k\}_{1 \leqslant k \leqslant 7}$ are shown in picture (b),
	where $\gamma_5$ is a trivial path.
	In the sequence $\gamma_1,\mu_1,\gamma_2,\ldots,\mu_7,\gamma_8$,
	all $\gamma_j, \mu_k$ have the relations as given in Condition \ref{condition 4}.}
\end{figure}

\begin{condition}\rm\label{condition 1}
	$t_1 = u$,
	$t_n = v$,
	and $t_{n-1} \ne v$.
\end{condition}

\begin{condition}\rm\label{condition 2}
For every pair $(t_{2k-1}, t_{2k})$,
there is an embedded path $\gamma_k$ in $L$ from $t_{2k-1}$ to $t_{2k}$
(we allow that
$t_{2k-1} = t_{2k}$ and $\gamma_k$ is a trivial path).
\end{condition}

\begin{condition}\rm\label{condition 3}
For every pair $(t_{2k}, t_{2k+1})$,
$t_{2k}, t_{2k+1}$ are distinct points contained in the same cataclysm.
We denote this cataclysm by $\mu_k$.
\end{condition}

\begin{condition}\rm\label{condition 4}
For every $k$,
if $\mu_k$ is an upward cataclysm,
then $\mu_{k+1}$ is a downward cataclysm,
and vice versa.
Moreover,
if $\mu_k$ is an upward cataclysm,
then $\gamma_k$ is positively oriented in $L$ and
$\gamma_{k+1}$ is negatively oriented in $L$.
If $\mu_k$ is a downward cataclysm,
then $\gamma_k$ is negatively oriented in $L$ and
$\gamma_{k+1}$ is positively oriented in $L$.
Here,
a trivial path is both positively oriented
and negatively oriented in $L$.
\end{condition}

Let $\alpha(u,v)$ be the broken path that
starts at $u = t_1$ and goes along $\gamma_1$ to $t_2$,
jumps to $t_3$ at $\mu_1$ and goes along $\gamma_2$ to $t_4$,
jumps to $t_5$ at $\mu_2$ and goes along $\gamma_3$ to $t_6$,
and so on inductively,
until it ends at $t_n = v$.
See Figure \ref{broken path} for an example of $\alpha(u,v)$.

Note that $\alpha(u,v)$ is contained in all paths from $u$ to $v$,
and it passes through all points in $L - \{u,v\}$ that separate $u$ and $v$.

\begin{defn}\rm\label{cusped path}
For $u,v,\{t_i\}_{1 \leqslant i \leqslant n},\{\mu_k\}_{1 \leqslant k \leqslant [\frac{n-1}{2}]}$ given above,
every pair $(t_{2k}, t_{2k+1})$ is called a \emph{cusp} of $\alpha(u,v)$.
Recall that $\mu_k$ is the cataclysm that contains both of $t_{2k}, t_{2k+1}$,
and recall from Assumptions \ref{has orderable cataclysm} and \ref{F} that,
$\mathcal{F}$ has orderable cataclysm,
and there is a linear order ``$\stackrel{_{\mu_k}}{<}$'' associated to $\mu_k$.
We call $(t_{2k}, t_{2k+1})$ a \emph{positive cusp} if $t_{2k} \stackrel{_{\mu_k}}{<} t_{2k+1}$,
and call $(t_{2k}, t_{2k+1})$ a \emph{negative cusp} if $t_{2k} \stackrel{_{\mu_k}}{>} t_{2k+1}$.
\end{defn}

\begin{convention}\rm\label{trivial path orientation}
	Recall that for every trivial path $\gamma_k$,
	it is both positively oriented and negatively oriented in $L$.
	We fix a unique orientation on $\gamma_k$ which satisfies Condition \ref{condition 4}:
	
	$\bullet$
	We assume that
	$\gamma_k$ is positively oriented and not negatively oriented in $L$ if 
	$\mu_{k-1}$ is a downward cataclysm or $\mu_k$ is an upward cataclysm.
	
	$\bullet$
	We assume that
	$\gamma_k$ is negatively oriented and not positively oriented in $L$ if 
	$\mu_{k-1}$ is an upward cataclysm or $\mu_k$ is a downward cataclysm.
\end{convention}

Henceforth,
every trivial path $\gamma_k$ has a unique orientation.
We explain the meaning of the orientation on $\gamma_k$ as follows.
Let $t$ denote $t_{2k-1} = t_{2k}$, which is the single point in $\gamma_k$.
Assume without loss of generality that 
$\mu_{k-1}$ is a downward cataclysm and $\mu_k$ is an upward cataclysm.
Then $\mu_{k-1} - \{t\}$ is contained in the downward side of $t$, 
and $\mu_k - \{t\}$ is contained in the upward side of $t$.
Thus,
$\gamma_k$ is a positively oriented path in the sense that
it goes from $\mu_{k-1}$ to $\mu_k$,
or goes from the downward side of $t$ to the upward side of $t$.

It follows from Condition \ref{condition 4} that

\begin{fact}\rm
The orientations on $\{\gamma_k\}_{1 \leqslant k \leqslant [\frac{n}{2}]}$ change at every cusp.
\end{fact}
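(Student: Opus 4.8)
The plan is to obtain the Fact directly from Condition~\ref{condition 4}, once Convention~\ref{trivial path orientation} has been invoked to pin down a single unambiguous orientation on every $\gamma_k$. After that convention, for each $1\leqslant k\leqslant[\frac{n}{2}]$ the path $\gamma_k$ is either positively oriented or negatively oriented in $L$ but not both, so the assertion ``the orientation changes at a cusp'' has a definite meaning: for a cusp $(t_{2k},t_{2k+1})$ lying between two consecutive pieces $\gamma_k$ and $\gamma_{k+1}$ of $\alpha(u,v)$, the orientations of $\gamma_k$ and $\gamma_{k+1}$ in $L$ are opposite.

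So I would fix such a cusp $(t_{2k},t_{2k+1})$, with $\mu_k$ the cataclysm containing $t_{2k}$ and $t_{2k+1}$, and split into two cases according to the sign of $\mu_k$ (Definition~\ref{positive cataclysm}). If $\mu_k$ is a positive cataclysm, then: when $\gamma_k$ is nontrivial Condition~\ref{condition 4} says it is positively oriented, and when $\gamma_k$ is trivial the clause ``$\mu_k$ is a positive cataclysm'' of Convention~\ref{trivial path orientation} selects the positive orientation for it; similarly, when $\gamma_{k+1}$ is nontrivial Condition~\ref{condition 4} makes it negatively oriented, and when it is trivial the clause ``$\mu_{(k+1)-1}=\mu_k$ is a positive cataclysm'' of Convention~\ref{trivial path orientation} selects the negative orientation for it. Hence $\gamma_k$ is positive and $\gamma_{k+1}$ is negative, i.e.\ the orientation changes. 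The case where $\mu_k$ is a negative cataclysm is identical, with the roles of ``positive'' and ``negative'' interchanged.

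The only point that genuinely needs checking — and the step I expect to be the sole (minor) obstacle — is that Convention~\ref{trivial path orientation} is well posed, i.e.\ its two defining clauses never simultaneously demand opposite orientations of the same trivial $\gamma_k$. Here I would use the alternation of signs built into Condition~\ref{condition 4}: consecutive cataclysms $\mu_{k-1}$ and $\mu_k$ have opposite signs, so ``$\mu_{k-1}$ is a negative cataclysm'' and ``$\mu_k$ is a positive cataclysm'' are equivalent hypotheses, as are ``$\mu_{k-1}$ is a positive cataclysm'' and ``$\mu_k$ is a negative cataclysm'' (and for the extreme $\gamma_k$'s only one of $\mu_{k-1},\mu_k$ is defined, so no conflict can arise at all). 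Thus no trivial path is asked to be both positively and negatively oriented, the orientation of each $\gamma_k$ is well defined, and the Fact follows at once from the case analysis above.
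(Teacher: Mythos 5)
Your proposal is correct and takes essentially the same route as the paper, which presents this Fact as an immediate consequence of Condition~\ref{condition 4} together with Convention~\ref{trivial path orientation}; your case analysis on the sign of $\mu_k$ and your check that the convention's two clauses never conflict (using the sign alternation of consecutive cataclysms) simply make explicit what the paper leaves implicit.
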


\begin{figure}\label{cusped}
	\centering
	\subfigure[]{
		\includegraphics[width=0.4\textwidth]{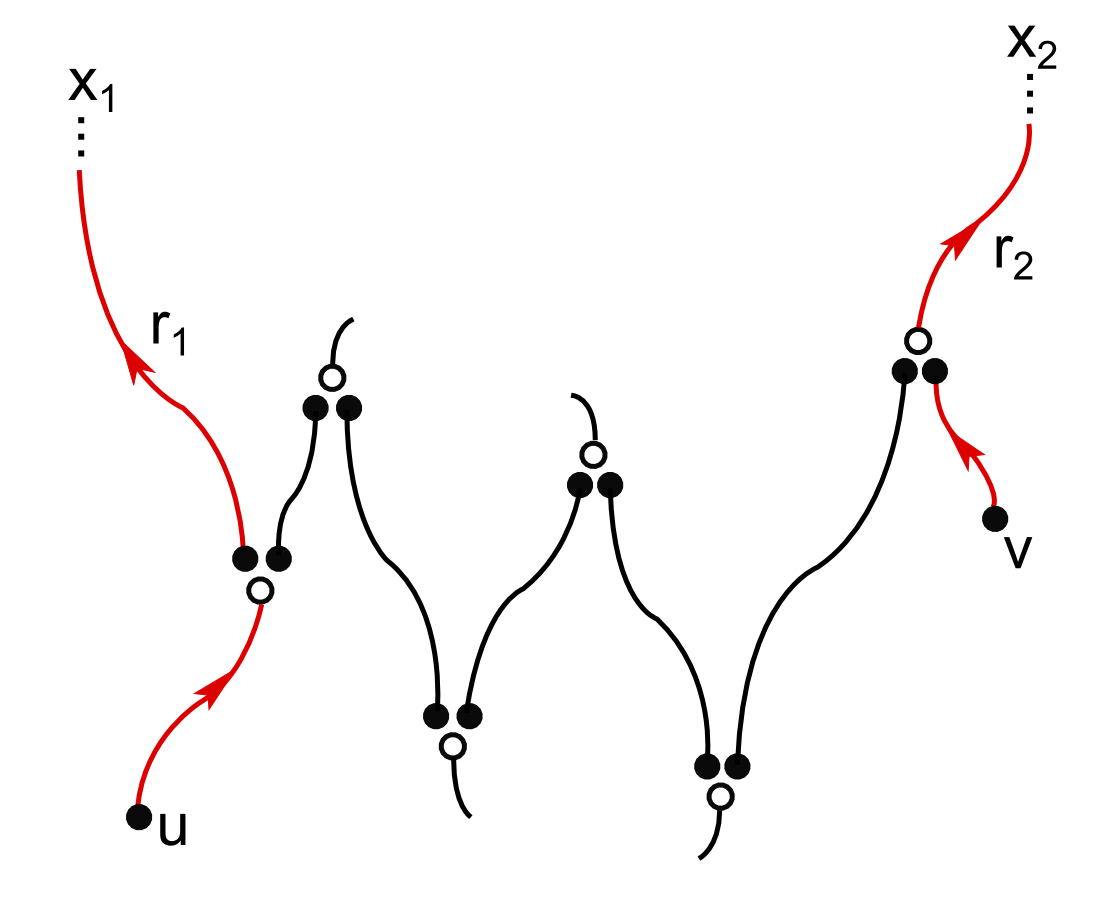}}
	\subfigure[]{
		\includegraphics[width=0.4\textwidth]{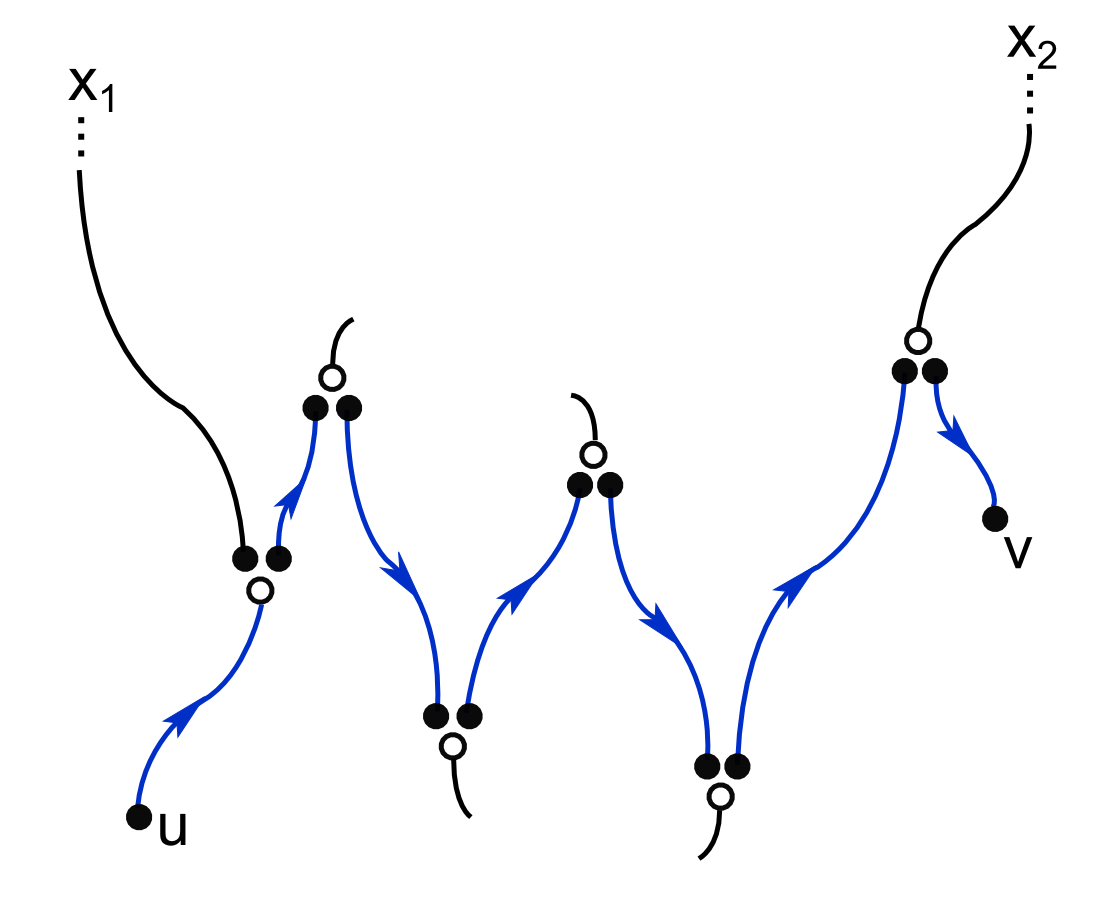}}
	\subfigure[]{
		\includegraphics[width=0.4\textwidth]{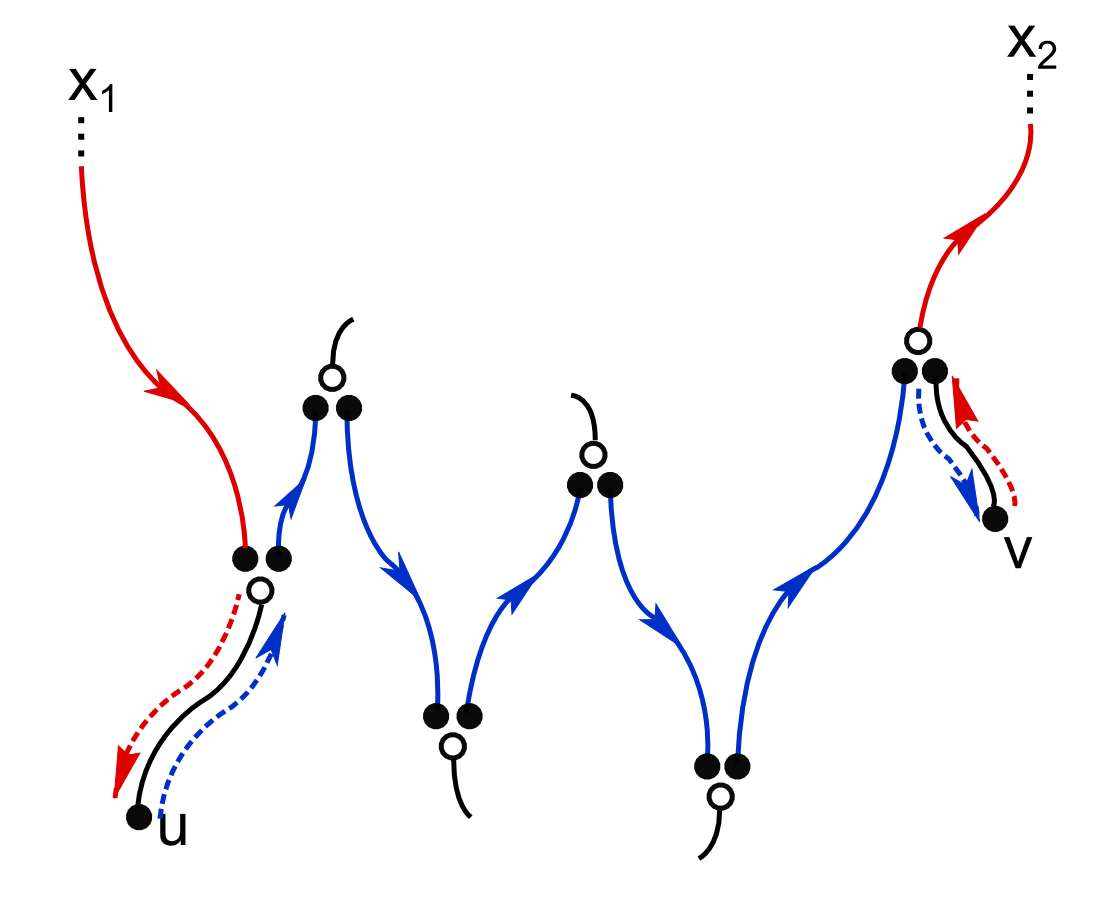}}
	\subfigure[]{
		\includegraphics[width=0.4\textwidth]{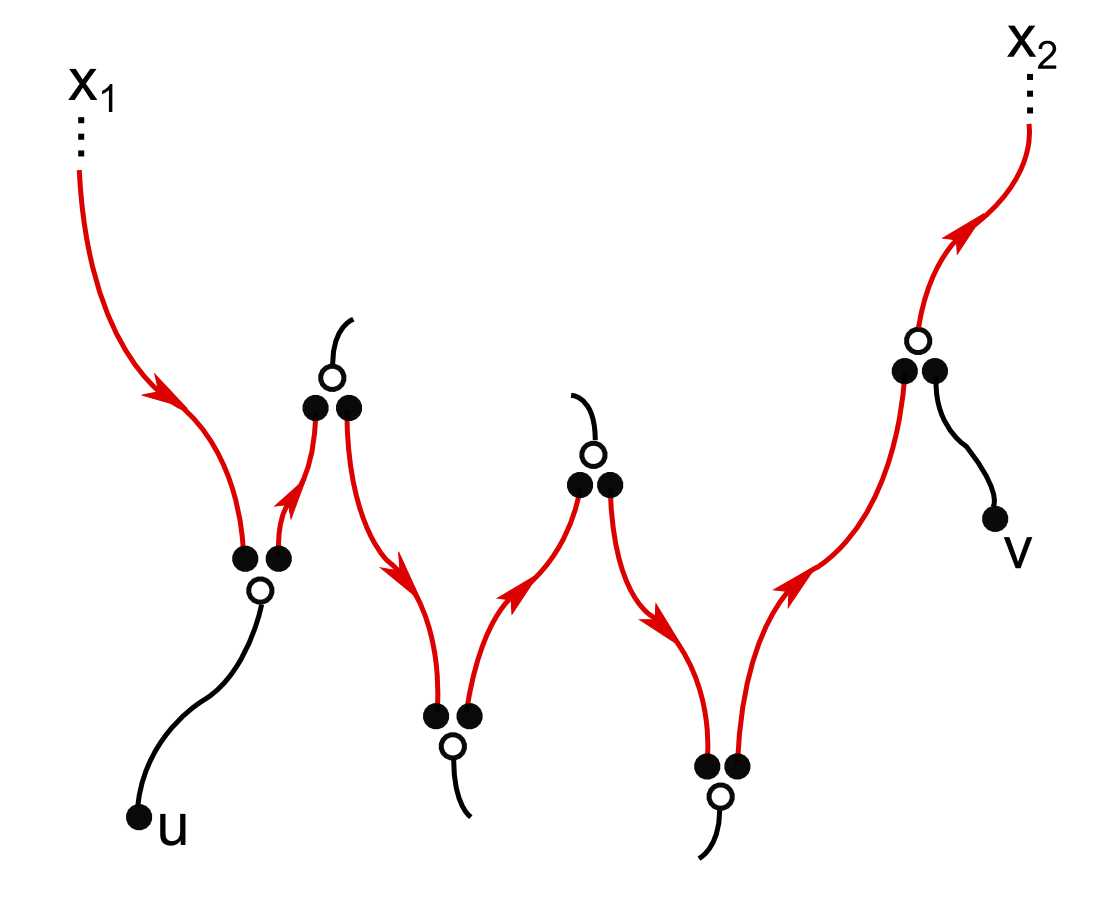}}
	\caption{We give an example of $\alpha(x_1,x_2)$ for two distinct ends $x_1, x_2$ of $L$.
		In picture (a),
		there are two rays $r_1,r_2$ which represent $x_1, x_2$ respectively and do not intersect,
		and we let $u = r_1(0)$, $v = r_2(0)$.
		The blue broken path in picture (b) is $\alpha(u,v)$.
		Picture (c) describes the broken curve that goes from $x_1$ to $x_2$ along
		$\overline{r_1}$ (i.e. $r_1$ with inverse direction), $\alpha(u,v)$ and $r_2$,
		which has some self-intersections.
		The red broken curve in picture (d) is $\alpha(x_1,x_2)$,
		obtained from deleting the self-intersections of the previous broken curve.}
\end{figure}

In the following construction,
we define an embedded broken curve between any two distinct ends of $L$:

\begin{construction}\rm\label{cusped line}
	Let $x_1,x_2$ be distinct ends of $L$.
	We choose rays $r_1,r_2: [0,+\infty) \to L$ which represent
	$x_1, x_2$ respectively,
	and we may identify $x_1$, $x_2$ with $r_1(+\infty)$, $r_2(+\infty)$.
	We may assume that $r_1, r_2$ do not intersect.
	Let $u = r_1(0)$, $v = r_2(0)$.
	We choose a broken curve (which may have self-intersections) that
	(1)
	starts at $x_1$ and goes along the inverse direction of $r_1$ to $u$,
	(2)
	goes from $u$ along $\alpha(u,v)$ to $v$,
	(3)
	goes from $v$ to $x_2$ along $r_2$.
	We delete all self-intersections of this broken curve.
	Then we obtain an embedded broken curve from $x_1$ to $x_2$.
	We denote by $\alpha(x_1,x_2)$ this embedded broken curve.
	Since $L$ is simply connected, 
	$\alpha(x_1,x_2)$ is uniquely defined,
	i.e. it is independent of the choices of rays representing $x_1, x_2$.
\end{construction}

See Figure \ref{cusped} (a)$\sim$(d) for an example of the process in Construction \ref{cusped line}.

We note that from Construction \ref{cusped line},
there is a unique sequence of points $t_1,\ldots,t_n$ with $2 \mid n$ such that
$t_1 = x_1$, 
$t_n = x_2$
and they satisfy Condition \ref{condition 2}$\sim$\ref{condition 4} in the previous discussions.
We still denote by $\gamma_k$ 
the embedded path in $L$ from $t_{2k-1}$ to $t_{2k}$ (which may be a trivial path),
and we still denote by $\mu_k$ 
the cataclysm that contains both of $t_{2k}, t_{2k+1}$.
Then $\alpha(x_1,x_2)$ is the broken curve that starts at $x_1$,
goes along $\gamma_1$ to $t_2$,
jumps to $t_3$ at $\mu_1$ and goes along $\gamma_2$ to $t_4$,
and so on inductively,
goes along $\gamma_{\frac{n}{2}}$ and ends at $t_n = x_2$.
We call every $\gamma_k$ a \emph{segment} of $\alpha(x_1,x_2)$.
And we call $\gamma_1$ the \emph{first segment} of $\alpha(x_1,x_2)$,
$\gamma_{\frac{n}{2}}$ the \emph{last segment} of $\alpha(x_1,x_2)$.

Similar to Definition \ref{cusped path},
we call every pair $(t_{2k}, t_{2k+1})$ a \emph{cusp} of $\alpha(x_1,x_2)$,
and call $(t_{2k}, t_{2k+1})$ a \emph{positive cusp} (resp. \emph{negative cusp}) if
$t_{2k} \stackrel{_{\mu_k}}{<} t_{2k+1}$ (resp. $t_{2k} \stackrel{_{\mu_k}}{>} t_{2k+1}$).
And we fix a unique orientation on every $\gamma_k$ which is a trivial path,
in the sense of Convention \ref{trivial path orientation}.
Then the orientations on the segments of $\alpha(x_1,x_2)$ change at every cusp.

\begin{defn}\rm
	For distinct positive ends $x_1, x_2$ of $L$,
	we define
	$$n(x_1,x_2) = |\{\text{positive cusps of } \alpha(x_1,x_2)\}| - 
	|\{\text{negative cusps of } \alpha(x_1,x_2)\}|.$$
\end{defn}

It's clear that

\begin{lm}\label{anti-sym}
	$n(x_1,x_2) = -n(x_2,x_1)$.
\end{lm}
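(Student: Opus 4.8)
The plan is to show that $\alpha(x_2,x_1)$ is nothing but $\alpha(x_1,x_2)$ traversed in the opposite direction, and that reversing the direction of such a broken curve interchanges positive cusps with negative cusps. Granting this, $n(x_1,x_2)$ and $n(x_2,x_1)$ are negatives of each other by a direct count.

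First I would use the uniqueness built into Construction \ref{cusped line}. Recall that, at the level of two points $u,v\in L$, the broken path $\alpha(u,v)$ passes through exactly the points of $\{t\in L-\{u,v\}\mid t\text{ separates }u\text{ and }v\}\cup\{u,v\}$, a set that is manifestly symmetric in $u$ and $v$; and the defining sequence $t_1,\dots,t_n$ is unique. Hence $\alpha(v,u)$ is $\alpha(u,v)$ read backwards. Carrying this through the ray-concatenation and self-intersection deletion of Construction \ref{cusped line} (and using that separation of two ends is symmetric and that $L$ is simply connected, so $\alpha(x_1,x_2)$ is independent of the chosen rays), one gets that $\alpha(x_2,x_1)$ is $\alpha(x_1,x_2)$ read backwards. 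Concretely, if $t_1=x_1,\dots,t_n=x_2$ (with $n$ even) is the sequence for $\alpha(x_1,x_2)$, with segments $\gamma_k$ from $t_{2k-1}$ to $t_{2k}$ and cusp cataclysms $\mu_k\ni\{t_{2k},t_{2k+1}\}$, then the sequence for $\alpha(x_2,x_1)$ is $t_n,t_{n-1},\dots,t_1$; because $n$ is even, after reindexing its segments are the reverses of the $\gamma_k$ and its cusps are again the cataclysms $\mu_k$, now carrying the ordered pair $(t_{2k+1},t_{2k})$ in place of $(t_{2k},t_{2k+1})$.

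Next I would compare cusp signs. The cusp $(t_{2k},t_{2k+1})$ of $\alpha(x_1,x_2)$ is positive iff $t_{2k}\stackrel{_{\mu_k}}{<}t_{2k+1}$. The corresponding cusp of $\alpha(x_2,x_1)$ is the ordered pair $(t_{2k+1},t_{2k})$ in the same cataclysm $\mu_k$, with the same fixed linear order $\stackrel{_{\mu_k}}{<}$, so it is positive iff $t_{2k+1}\stackrel{_{\mu_k}}{<}t_{2k}$, i.e. iff the original cusp was negative. Thus reversal is a sign-reversing bijection between the cusps of $\alpha(x_1,x_2)$ and the cusps of $\alpha(x_2,x_1)$, whence
$$|\{\text{positive cusps of }\alpha(x_2,x_1)\}|=|\{\text{negative cusps of }\alpha(x_1,x_2)\}|$$
and symmetrically for the other count; subtracting gives $n(x_2,x_1)=-n(x_1,x_2)$.

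The only point needing care — the \emph{main obstacle} — is the first step: making precise that $\alpha(x_2,x_1)$ is literally the reversal of $\alpha(x_1,x_2)$, and in particular that the parity bookkeeping works out so that cusps are sent to cusps (and segments to segments) rather than being shifted by one. This rests entirely on $n$ being even and on the uniqueness already established in Construction \ref{cusped line}; it is probably cleanest to state and prove the reversal first for the point-to-point path $\alpha(u,v)$, where the defining set is visibly symmetric, and then propagate it through the construction. Once that is in place, the rest is the bookkeeping above.
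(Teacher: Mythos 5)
Your argument is correct: the paper states this lemma without proof (prefacing it with ``It's clear that''), and your reversal argument --- $\alpha(x_2,x_1)$ is $\alpha(x_1,x_2)$ traversed backwards, so each cusp $(t_{2k},t_{2k+1})$ becomes $(t_{2k+1},t_{2k})$ in the same cataclysm $\mu_k$ with the same fixed order $\stackrel{_{\mu_k}}{<}$, swapping positive and negative cusps --- is exactly the reasoning the paper leaves implicit. The parity bookkeeping you flag does work out since $n$ is even, so nothing further is needed.
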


Furthermore,

\begin{lm}\label{nonzero}
	Let $x_1,x_2$ be distinct positive ends of $L$.
	Then $n(x_1,x_2) \ne 0$.
\end{lm}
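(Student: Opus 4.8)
The plan is to reduce the statement to a parity count on the cusps of $\alpha(x_1,x_2)$. Write $\alpha(x_1,x_2)$ as produced by Construction \ref{cusped line}, with segments $\gamma_1,\ldots,\gamma_m$ where $m=n/2$, and with cusps $(t_{2k},t_{2k+1})$ for $1\leqslant k\leqslant m-1$, so that $\gamma_1$ is the first segment (emanating from $x_1$) and $\gamma_m$ the last segment (terminating at $x_2$). Note first that neither $\gamma_1$ nor $\gamma_m$ can be a trivial path, since each of them has an end of $L$ as one of its endpoints; consequently $\gamma_1$ and $\gamma_m$ each have an unambiguous orientation in $L$.

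The first step is to pin down the orientations of these two extreme segments from the hypothesis that $x_1$ and $x_2$ are \emph{positive} ends. Since a positive end is represented by a ray whose increasing orientation is consistent with the orientation on $L$, moving along $\gamma_1$ away from $x_1$ is moving in the negative direction, so $\gamma_1$ is negatively oriented in $L$; symmetrically, moving along $\gamma_m$ toward $x_2$ is moving in the positive direction, so $\gamma_m$ is positively oriented in $L$. Here one should check that deleting the self-intersections in Construction \ref{cusped line} does not alter the behaviour of the broken curve near the two ends, but this is immediate: any embedded broken curve leaving a positive end must begin by going in the negative direction, and similarly at the terminal end.

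The second step is to use that the orientations of the segments of $\alpha(x_1,x_2)$ change at every cusp (the fact following Convention \ref{trivial path orientation}, which by that Convention applies even when some intermediate $\gamma_k$ is trivial). Hence the orientations of $\gamma_1,\ldots,\gamma_m$ strictly alternate, so $\gamma_m$ has the orientation opposite to that of $\gamma_1$ precisely when $m-1$ is odd. Combined with the first step, namely that $\gamma_1$ is negatively oriented while $\gamma_m$ is positively oriented, this forces $m-1$ to be odd; in particular $\alpha(x_1,x_2)$ has at least one cusp.

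Finally, each of the $m-1$ cusps of $\alpha(x_1,x_2)$ is either a positive cusp or a negative cusp, since the two points forming a cusp are distinct points of a cataclysm and hence strictly comparable under the fixed linear order on that cataclysm. Therefore the number of positive cusps plus the number of negative cusps equals $m-1$, and so $n(x_1,x_2)\equiv m-1\equiv 1\pmod 2$, which in particular gives $n(x_1,x_2)\neq 0$. The only part requiring genuine care is the first step, where the orientations of the first and last segments are extracted from the positive-end hypothesis (together with the verification that the self-intersection deletion is harmless near the ends); the remainder is the alternation bookkeeping and a short parity count, and I do not anticipate a real obstacle.
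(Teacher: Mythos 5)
Your proposal is correct and follows essentially the same argument as the paper: the first segment is negatively oriented and the last positively oriented because $x_1,x_2$ are positive ends, orientations alternate at each cusp so the number of cusps is odd, and hence $n(x_1,x_2)$ is odd and nonzero. The extra verifications you flag (nontriviality of the extreme segments, harmlessness of deleting self-intersections near the ends) are sound but not needed beyond what the paper already uses.
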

\begin{proof}
	Let $m = |\{\text{cusps of } \alpha(x_1,x_2)\}|$. 
	Since $x_1,x_2$ are positive ends of $L$,
	the first segment of $\alpha(x_1,x_2)$ is negatively oriented in $L$,
	and the last segment of $\alpha(x_1,x_2)$ is positively oriented in $L$.
	Notice that the orientations on the segments of $\alpha(x_1,x_2)$ change at every cusp.
	So
	$2 \nmid m$.
	
	We have $n(x_1,x_2) = m - 2 \cdot |\{\text{negative cusps of } \alpha(x_1,x_2)\}|$,
	and thus
	$2 \nmid n(x_1,x_2)$.
	So $n(x_1,x_2) \ne 0$.
\end{proof}

\subsection{The linear order on $End_+(L)$}\label{subsection 3.2}

In this subsection,
we aim to define a linear order on $End_+(L)$ preserved by the action of $G$.
Our first step is the following lemma:

\begin{lm}
	Let $x_1,x_2,x_3$ be three distinct positive ends of $L$.
	Then
	$$n(x_1,x_3) \in \{n(x_1,x_2)+n(x_2,x_3)-1,n(x_1,x_2)+n(x_2,x_3)+1\}.$$
\end{lm}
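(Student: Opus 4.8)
The plan is to prove the equivalent symmetric statement
$$n(x_1,x_2) + n(x_2,x_3) + n(x_3,x_1) \in \{-1,\,1\},$$
which follows from the Lemma together with Lemma \ref{anti-sym} (since $-n(x_1,x_3) = n(x_3,x_1)$). View $\alpha(x_1,x_2)$, $\alpha(x_2,x_3)$, $\alpha(x_3,x_1)$ as subsets of $L$. Because $L$ is a simply connected $1$-manifold, for each $i$ the two broken curves that reach $x_i$ share a maximal common initial broken subcurve $\ell_i$, the \emph{leg at} $x_i$, and one gets a decomposition $\alpha(x_i,x_j) = \ell_i * M_{ij} * \overline{\ell_j}$ where $M_{ij}$ is a ``central'' broken subcurve joining the far endpoint of $\ell_i$ to the far endpoint of $\ell_j$. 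The first step is to observe that the legs cancel in the cyclic sum: if $s_i$ denotes the signed cusp count of $\ell_i$ traversed from $x_i$ outward, then within the three terms $\ell_i$ is traversed once outward (contributing $+s_i$) and once inward (contributing $-s_i$), so the cyclic sum reduces to the total central contribution $\sum_{(i,j)}(\text{signed cusp count of } M_{ij})$.

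The heart of the matter is to determine the central configuration: I claim it is a single cataclysm $\mu^{*}$ containing three \emph{distinct} points $c_1, c_2, c_3$ (where $c_i$ is the far endpoint of $\ell_i$), with each $M_{ij}$ being just the jump $c_i \to c_j$ inside $\mu^{*}$, so that $\alpha(x_i,x_j) = \ell_i * \{c_i \to c_j\} * \overline{\ell_j}$. I would establish this by excluding the degenerate alternatives. First, the three legs cannot all emanate from a single point of $L$ (that would be a trivalent vertex in the $1$-manifold $L$), and neither can two of them meet smoothly at a point $c$ with the third attached at a point non-separated from $c$: in the latter case the segment of $\alpha(x_i,x_j)$ passing smoothly through $c$ has a definite orientation, and the orientation-alternation rule of Condition \ref{condition 4} forces the cataclysm at which $\alpha(x_i,x_k)$ turns off and the cataclysm at which $\alpha(x_j,x_k)$ turns off to have opposite signs; tracking which side of $c$ their turning points lie on then exhibits a point of $L$ separating two points of a common cataclysm, which is impossible. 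Pushing the same bookkeeping through the remaining cases forces each $M_{ij}$ to be a single cusp; it forces the three turning cataclysms to coincide (otherwise $c_1$ would lie in two of them, hence one positive and one negative, so that $c_1$ would separate $c_2$ from $c_3$, contradicting Remark \ref{remark on positive cataclysm} and Definition \ref{cataclysm}); and it forces $c_1, c_2, c_3$ to be pairwise distinct (any coincidence reinstates a trivalent vertex).

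With this structure in hand, the central contribution is exactly $\sigma_{12} + \sigma_{23} + \sigma_{31}$, where $\sigma_{ij} = +1$ if $c_i \stackrel{_{\mu^{*}}}{<} c_j$ and $\sigma_{ij} = -1$ otherwise — this being the sign of the cusp $c_i \to c_j$ in the sense of Definition \ref{cusped path}. It remains to note the elementary fact that for any three distinct elements of a linearly ordered set, $\sigma_{12} + \sigma_{23} + \sigma_{31} \in \{-1,\,1\}$: a direct check of the six orderings shows the cyclic sum is $+1$ when $c_1,c_2,c_3$ occur in the cyclic order induced by $\stackrel{_{\mu^{*}}}{<}$ and $-1$ otherwise. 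I expect the real obstacle to be the structural claim about the center — converting the heuristic ``$L$ has no trivalent vertices'' into a rigorous case analysis on the local model of a cataclysm — while the closing numerical step is immediate.
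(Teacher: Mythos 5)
There is a genuine gap, and it sits exactly where you predicted: the structural claim that the three broken curves always meet a single cataclysm $\mu^{*}$ in three \emph{distinct} points $c_1,c_2,c_3$, with each $M_{ij}$ a single jump, is false. The degenerate configuration you try to exclude does occur. Concretely, let $\mu=\{t,t'\}$ be a negative cataclysm, let $x_1$ be a positive end reached by going up the branch above $t$, let $x_2$ be a positive end reached by going up the branch above $t'$, and let $x_3$ be a positive end reached by descending the common stalk below $\mu$ and turning at some other negative cataclysm $\nu$ further down. Then $\alpha(x_1,x_2)$ descends to $t$, jumps to $t'$ at $\mu$, and ascends to $x_2$ (one cusp, at $\mu$); but $t$ does not separate $x_2$ from $x_3$ and $t'$ does not separate $x_1$ from $x_3$, so $\alpha(x_1,x_3)$ passes smoothly through $t$ into the stalk with no cusp at $\mu$, and $\alpha(x_2,x_3)$ passes smoothly through $t'$ into the stalk with no cusp at $\mu$. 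The leg at $x_3$ is then a half-open arc accumulating on both $t$ and $t'$: it has no far endpoint $c_3$, and only \emph{one} of the three curves has a cusp at the junction. (This is precisely Case 1 / Figure \ref{cusp}(a)--(b) of the paper, and Case 2 / Figure \ref{cusp}(c) is a further variant in which only $\alpha(x_1,x_3)$ carries the special cusp.) Your exclusion argument breaks down because it presumes that each of the other two curves ``turns off at a cataclysm''; in the configuration above, $\alpha(x_2,x_3)$ diverges from $\alpha(x_1,x_3)$ along the half-open stalk without any cusp, so there is no second ``turning cataclysm'' to compare signs with, and no contradiction arises. In these cases the discrepancy is $\pm 1$ for a different reason than the one you give: exactly one of the three curves carries one extra (unmatched) cusp at the junction.

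The rest of your architecture is sound and parallels the paper: the symmetrization via Lemma \ref{anti-sym}, the cancellation of the legs (the paper's $\beta_1,\beta_2,\beta_3$), and your closing computation $\sigma_{12}+\sigma_{23}+\sigma_{31}\in\{-1,1\}$ for three distinct points of one cataclysm is exactly the paper's Case 3 (Figure \ref{cusp}(d)). To repair the proof you must replace the single structural claim by a case analysis on how many of the three curves have a special cusp at the junction (the possibilities are one or three, never zero or two), and verify the $\pm 1$ count separately in the ``one special cusp'' cases rather than deriving everything from the cyclic-order identity.
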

\begin{proof}
    Let $\beta_1 = \alpha(x_1,x_2) \cap \alpha(x_1,x_3)$,
    $\beta_2 = \alpha(x_1,x_2) \cap \alpha(x_2,x_3)$,
    $\beta_3 = \alpha(x_1,x_3) \cap \alpha(x_2,x_3)$.
    See Figure \ref{beta}.
    
    Let $t \in \alpha(x_1,x_2) \cup \alpha(x_1,x_3) \cup \alpha(x_2,x_3)$.
    For each $i \in \{1,2,3\}$,
    we say $x_i$ is contained in a component of $L  - \{t\}$ if
    there is a ray of $L$ representing the end $x_i$ that is contained in this component.
    Note that $t \in \beta_i$ if and only if $x_i$ is contained in one component of $L - \{t\}$ and
    $\{x_1, x_2, x_3\} - \{x_i\}$ is contained in the other component of $L - \{t\}$.
    So $\beta_1, \beta_2, \beta_3$ are disjoint from each other.
    In addition,
    assume further $t \in \alpha(x_i, x_j)$ ($1 \leqslant i < j \leqslant 3$),
    then $x_i, x_j$ are contained in the two components of $L - \{t\}$ respectively.
    We denote by $L_i(t), L_j(t)$ these two components of $L - \{t\}$,
    where $x_i$ is contained in $L_i(t)$ and $x_j$ is contained in $L_j(t)$.
    Let $k = \{1,2,3\} - \{i,j\}$.
    Then $t \in \beta_i$ when $x_k$ is contained in $L_j(t)$, 
    and
    $t \in \beta_j$ when $x_k$ is contained in $L_i(t)$.
    Thus,
    $t$ is contained in either $\beta_i$ or $\beta_j$.
    
    We assume that the orientations on $\beta_1,\beta_3$ are induced from $\alpha(x_1,x_3)$,
    and the orientation on $\beta_2$ is induced from $\alpha(x_1,x_2)$.
    Then
    $$\alpha(x_1,x_3) = \beta_3 \circ \beta_1,$$
    $$\alpha(x_1,x_2) = \beta_2 \circ \beta_1,$$
    $$\alpha(x_2,x_3) = \beta_3 \circ \overline{\beta_2}.$$
    Here,
    $\overline{\beta_2}$ means the path $\beta_2$ with opposite orientation,
    and $\circ$ means the composition of paths.
    By the above equations,
    we can ensure that:
    
\begin{figure}\label{beta}
	\includegraphics[width=0.6\textwidth]{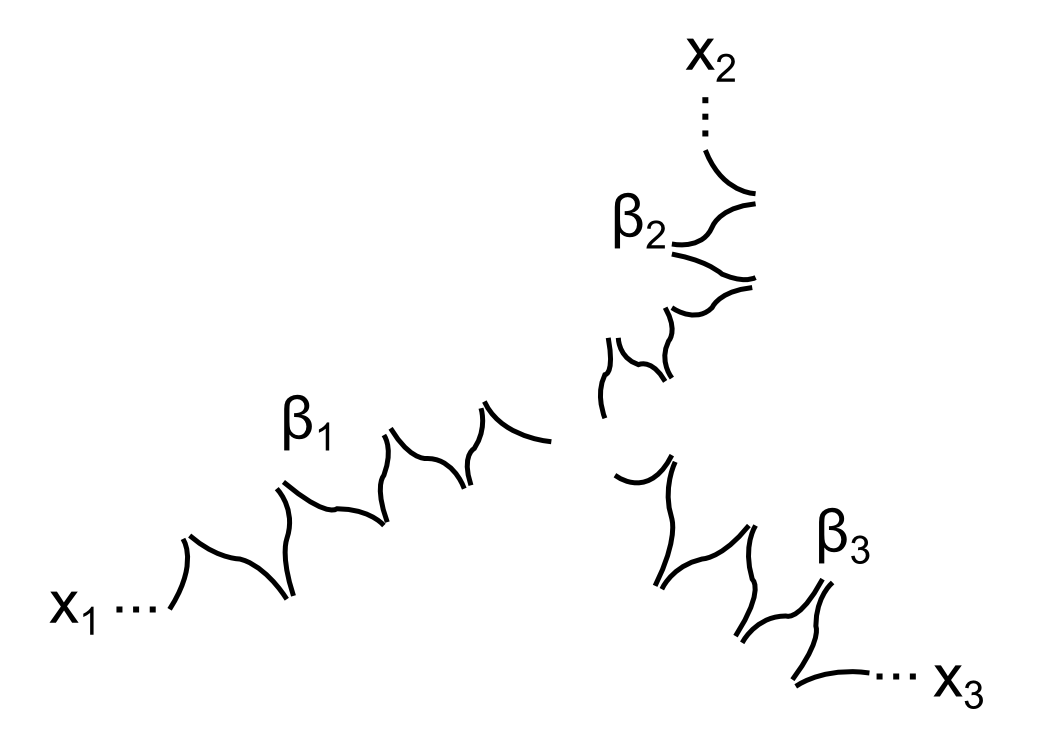}
	\caption{The pictures of $\beta_1$, $\beta_2$, $\beta_3$.
		Here, the ``common intersection place'' of $\beta_1$, $\beta_2$, $\beta_3$ is undetermined
		(it has more than one cases).}
\end{figure}
    
    $\bullet$
    Every cusp contained in $\beta_1$ appears one time in $\alpha(x_1,x_3)$ and 
    $\alpha(x_1,x_2)$,
    and appears zero time in $\alpha(x_2,x_3)$.
    As a result,
    it appears one time in both $\alpha(x_1,x_3)$ and
    $\alpha(x_1,x_2) \cup \alpha(x_2,x_3)$.
    
    $\bullet$
    Every cusp contained in $\beta_2$ appears one time in $\alpha(x_1,x_2)$,
    negative one time in $\alpha(x_2,x_3)$ (i.e. one time with opposite sign),
    and zero time in $\alpha(x_1,x_3)$.
    Therefore,
    every cusp contained in $\beta_2$ contributes zero in both $n(x_1,x_3)$ and
    $n(x_1,x_2) + n(x_2,x_3)$.
    
    $\bullet$
    Every cusp contained in $\beta_3$ appears one time in $\alpha(x_1,x_3)$ and 
    $\alpha(x_2,x_3)$,
    and appears zero time in $\alpha(x_1,x_2)$.
    Hence
    it appears one time in both $\alpha(x_1,x_3)$ and
    $\alpha(x_1,x_2) \cup \alpha(x_2,x_3)$.
    
    By the above discussions,
    all cusps contained in $\beta_1,\beta_2,\beta_3$ contribute equally to 
    both $n(x_1,x_3)$ and $n(x_1,x_2)+n(x_2,x_3)$,
    and thus we can ignore them.
    It only remains to consider the cusps of
    $\alpha(x_1,x_2)$, $\alpha(x_1,x_3)$, $\alpha(x_2,x_3)$ that
    appear in their ``common intersection place''.
    
    For a cusp $\mu$ in $\alpha(x_i, x_j)$ ($1 \leqslant i < j \leqslant 3$),
    we call $\mu$ a ``special cusp'' if 
    $(\mu \nsubseteq \beta_i) \wedge (\mu \nsubseteq \beta_j)$,
    where ``$\wedge$'' denotes the logical conjunction ``and''.
    Note that $\mu$ is also not contained in $\beta_k$ 
    (where $k = \{1,2,3\} - \{i,j\}$), 
    as
    $\alpha(x_i, x_j) = \beta_i \cup \beta_j$ and
    $\beta_k \cap \beta_i = \beta_k \cap \beta_j = \emptyset$.
    In addition,
    a special cusp is contained in exactly one of
    $\alpha(x_1,x_2)$, $\alpha(x_1,x_3)$, $\alpha(x_2,x_3)$:
    if a cusp is contained in two of them,
    for instance $\alpha(x_1,x_2), \alpha(x_1,x_3)$,
    then this cusp must be contained in $\alpha(x_1,x_2) \cap \alpha(x_1,x_3) = \beta_1$ and thus is not a special cusp.
    Also,
    each of $\alpha(x_1,x_2)$, $\alpha(x_1,x_3)$, $\alpha(x_2,x_3)$ contains no more than one special cusp
    (otherwise, 
    assume $\alpha(x_i,x_j)$ contains two special cusps, 
    then at least one of them is contained in either $\beta_i$ or $\beta_j$,
    which is impossible).
    
    There is a point $t \in \alpha(x_1,x_3)$ such that:
    $\beta_1$ is contained in
    the subpath of $\alpha(x_1,x_3)$ from $x_1$ to $t$,
    and
    $\beta_3$ is contained in
    the subpath of $\alpha(x_1,x_3)$ from $t$ to $x_3$.
    Here, 
    $t$ may not be uniquely determined by these conditions
    (if there is a special cusp in $\alpha(x_1,x_3)$,
    then both of the two points of this cusp satisfy these conditions).
    We have the following three cases:
    
    \begin{figure}\label{cusp}
    	\centering
    	\subfigure[]{
    		\includegraphics[width=0.4\textwidth]{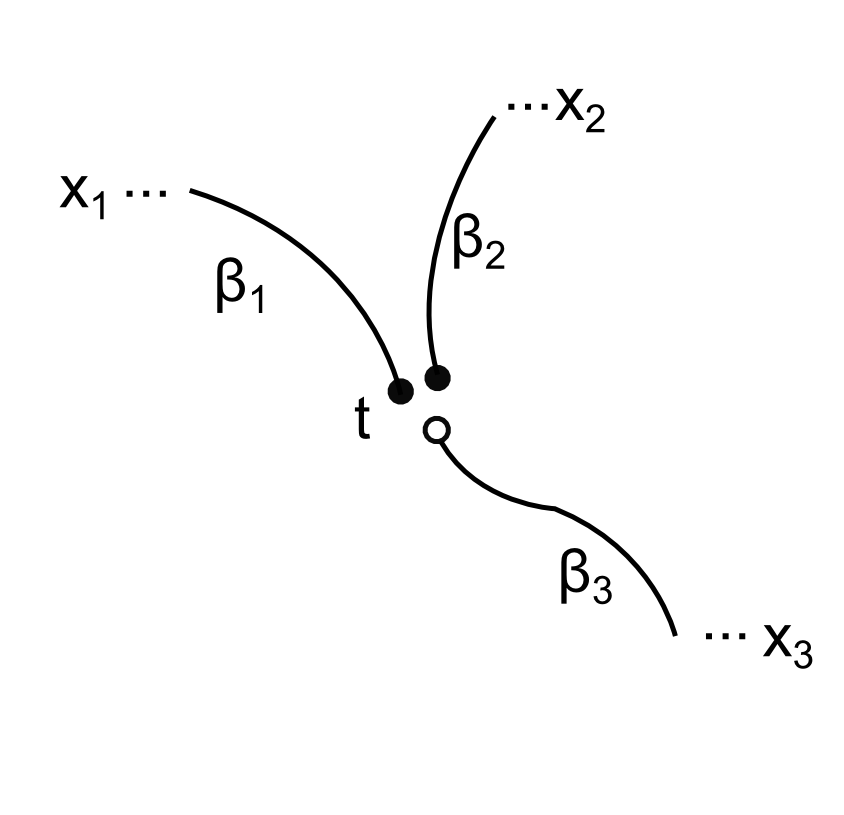}}
    	\subfigure[]{
    		\includegraphics[width=0.4\textwidth]{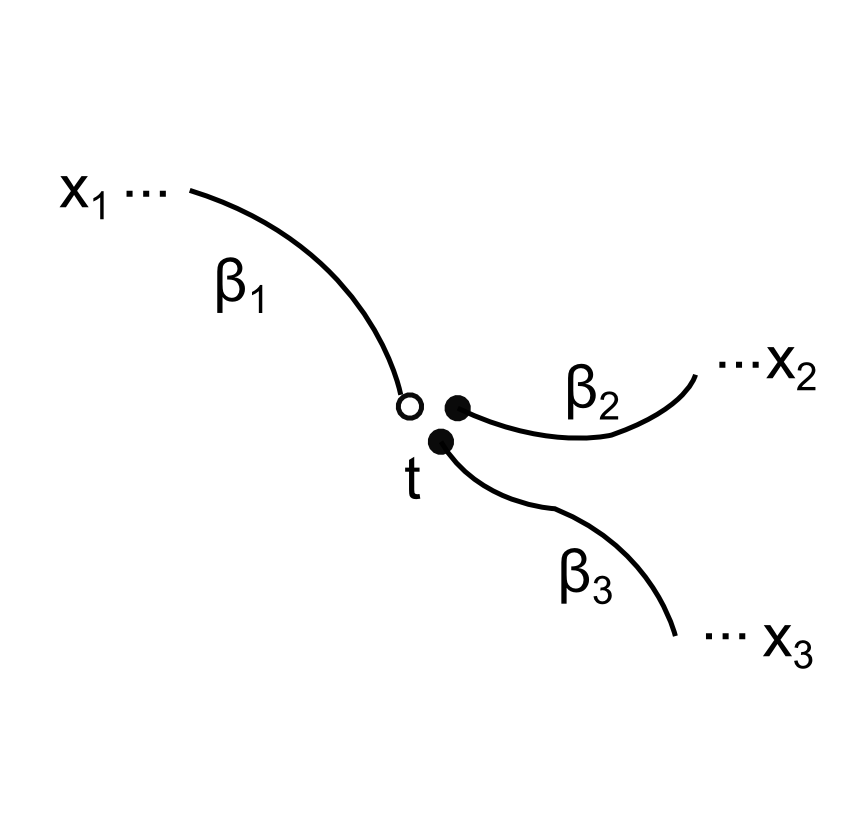}}
    	\subfigure[]{
    		\includegraphics[width=0.4\textwidth]{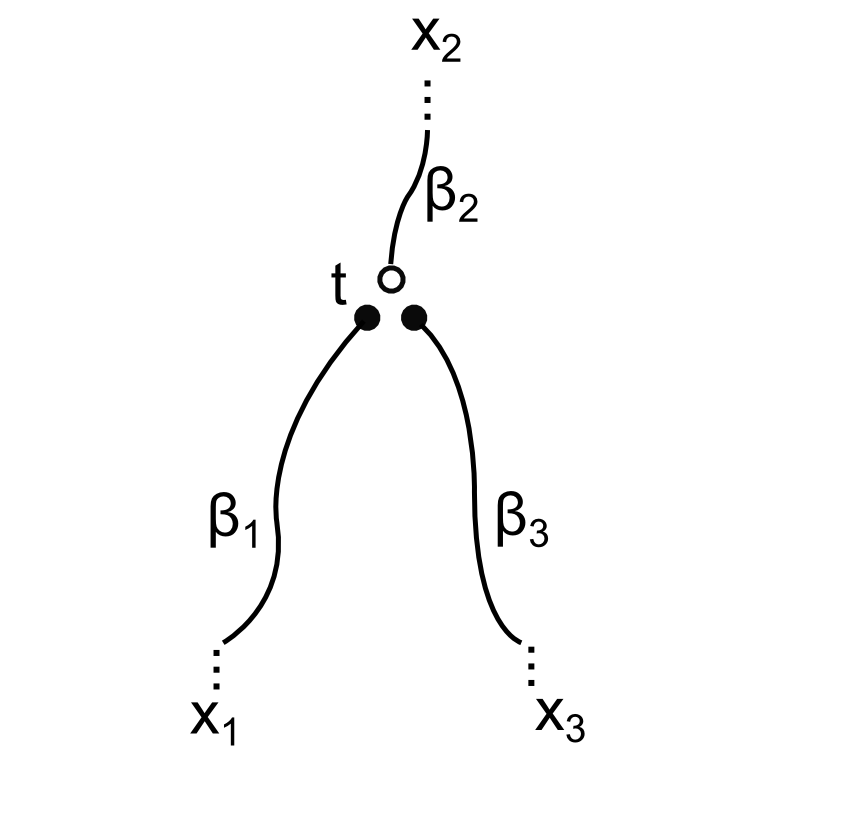}}
    	\subfigure[]{
    		\includegraphics[width=0.4\textwidth]{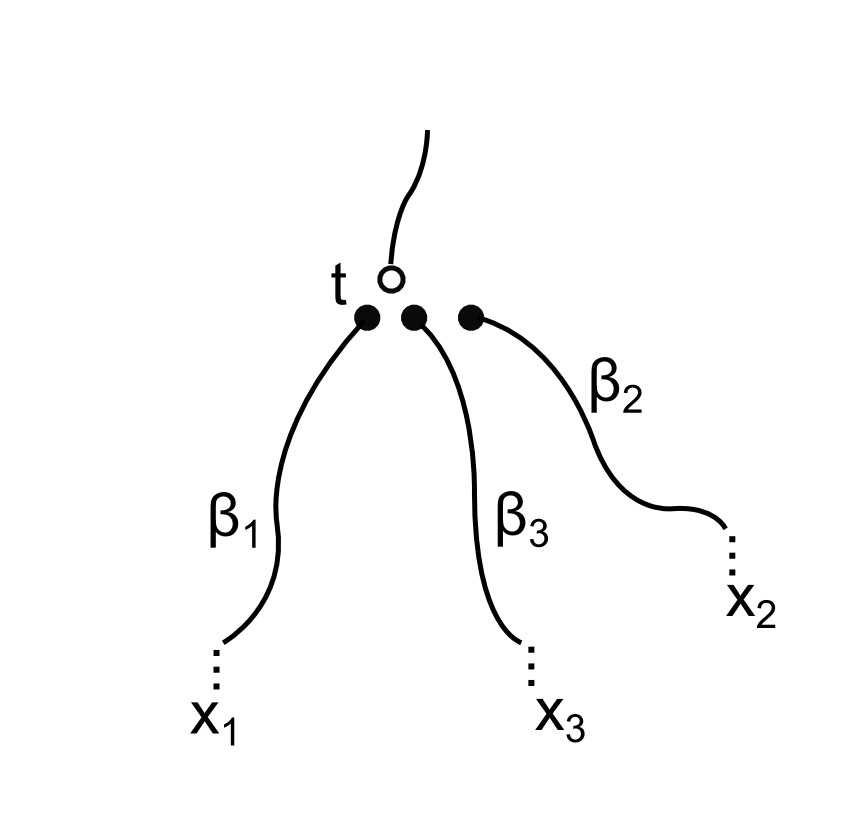}}
    	\caption{}
    \end{figure}
	
	\begin{case}\rm
		Assume that there is no special cusp in $\alpha(x_1,x_3)$,
		which implies that
		every cusp of $\alpha(x_1,x_3)$ is contained in 
		either $\beta_1$ or $\beta_3$.
		Then one of the following two possibilities holds:
		
		$\bullet$
		$t \in \beta_1$.
		Then there is a special cusp of $\alpha(x_1,x_2)$ that contains $t$
		(see Figure \ref{cusp} (a)).
		We denote by $\mu$ the cataclysm that contains this special cusp.
		Then $\alpha(x_2,x_3)$ does not have a cusp at $\mu$.
		
		$\bullet$
		$t \in \beta_3$.
		Then $t$ is contained in a special cusp of $\alpha(x_2,x_3)$ 
		(see Figure \ref{cusp} (b)).
		We denote by $\mu$ the cataclysm that contains this special cusp.
		Then $\alpha(x_1,x_2)$ has no cusp at $\mu$.
		
		Thus,
		there is a cataclysm $\mu$ such that
		(1)
		$t \in \mu$,
		(2)
		exactly one of $\alpha(x_1,x_2), \alpha(x_2,x_3)$ has a cusp at $\mu$,
		(3)
		$\alpha(x_1,x_3)$ has no cusp at $\mu$.
		It follows that
		$|n(x_1,x_3)- (n(x_1,x_2)+n(x_2,x_3))| = 1$.
	\end{case}
	
	\begin{case}\rm
		Assume that there is a special cusp in $\alpha(x_1,x_3)$.
		Then $t$ is contained in this cusp,
		and we may assume without loss of generality $t \in \beta_1$.
		Assume further that
		$t$ is not contained in a special cusp of $\alpha(x_1,x_2)$
		(see Figure \ref{cusp} (c)).
		Then there is a cataclysm $\mu$ such that
		(1)
		$t \in \mu$,
		(2)
		$\alpha(x_1,x_3)$ has a cusp at $\mu$,
		(3)
		both of $\alpha(x_1,x_2), \alpha(x_2,x_3)$ have no cusp at $\mu$.
		Thus
		$|n(x_1,x_3)- (n(x_1,x_2)+n(x_2,x_3))| = 1$.
	\end{case}
	
	\begin{case}\rm
		We assume that there is a special cusp of $\alpha(x_1,x_3)$ 
		(then $t$ is contained in this cusp).
		We may assume without loss of generality $t \in \beta_1$.
		And we assume further that
		$t$ is also contained in a special cusp of $\alpha(x_1,x_2)$.
		Let $t_3 \in \beta_3$ for which $(t, t_3)$ is a special cusp of $\alpha(x_1,x_3)$,
		and let $t_2 \in \beta_2$ for which $(t, t_2)$ is a special cusp of $\alpha(x_1,x_2)$.
		We first exclude the case that the two cusps $(t, t_2), (t, t_3)$ are contained in two different cataclysms.
		As noted in Remark \ref{remark on upward cataclysm},
		if $t$ is contained in two different cataclysms $\mu_1, \mu_2$,
		then one of $\mu_1, \mu_2$ is an upward cataclysm and the other one is a downward cataclysm,
		and moreover,
		$t$ separates any $\{a,b\}$ with $a \in \mu_1 - \{t\}, b \in \mu_2 - \{t\}$.
		As $t \notin \beta_2, \beta_3$ and $\alpha(x_2, x_3) = \beta_3 \circ \overline{\beta_2}$,
		$t \notin \alpha(x_2, x_3)$.
		This implies that $t$ doesn't separate $\{x_2, x_3\}$ and thus
		doesn't separate $\{t_2, t_3\}$.
		It follows that $t, t_2, t_3$ are contained in the same cataclysm of $L$ (see Figure \ref{cusp} (d)).
		Let $\mu$ denote this cataclysm.
		Note that $\mu$ contains three cusps $(t, t_2), (t, t_3), (t_2, t_3)$ in
		$\alpha(x_1, x_2), \alpha(x_1, x_3), \alpha(x_2, x_3)$ respectively.
		We consider the following two possibilities:
		
		$\bullet$
		Assume
		$t \stackrel{_\mu}{<} t_3$.
		Then $\alpha(x_1,x_3)$ has a positive cusp at $\mu$.
		And we have either
		$t \stackrel{_\mu}{<} t_2$ or $t_2 \stackrel{_\mu}{<} t_3$.
		So at least one of the two cusps of $\alpha(x_1,x_2)$, 
		$\alpha(x_2,x_3)$ at $\mu$ is positive.
		It follows that
		$|n(x_1,x_3)- (n(x_1,x_2)+n(x_2,x_3))| = 1$.
		
		$\bullet$
		Assume
		$t \stackrel{_\mu}{>} t_3$.
		Then $\alpha(x_1,x_3)$ has a negative cusp at $\mu$,
		and either
		$t \stackrel{_\mu}{>} t_2$ or $t_2 \stackrel{_\mu}{>} t_3$.
		So at least one of the two cusps of $\alpha(x_1,x_2)$, 
		$\alpha(x_2,x_3)$ at $\mu$ is negative,
		and therefore
		$|n(x_1,x_3)- (n(x_1,x_2)+n(x_2,x_3))| = 1$.
	\end{case}
	
	Thus we have
	$n(x_1,x_3) \in \{n(x_1,x_2)+n(x_2,x_3)-1,n(x_1,x_2)+n(x_2,x_3)+1\}$.
\end{proof}

It follows that

\begin{cor}\label{linear}
	(a)
	If $n(x_1,x_2), n(x_2,x_3) > 0$,
	then
	$n(x_1,x_3) > 0$.
	
	(b)
	If $n(x_1,x_2), n(x_2,x_3) < 0$,
	then
	$n(x_1,x_3) < 0$.
\end{cor}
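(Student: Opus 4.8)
The plan is to read off both statements directly from the lemma just proved, using nothing more than the fact that $n(\cdot,\cdot)$ is integer-valued and never zero on pairs of distinct positive ends. First I would recall Lemma \ref{nonzero}: for distinct positive ends $x_i, x_j$ we have $n(x_i,x_j) \neq 0$, and indeed the proof of that lemma actually shows $n(x_i,x_j)$ is odd. In particular a positive value of $n$ is at least $1$, and a negative value is at most $-1$.

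For part (a), assume $n(x_1,x_2) > 0$ and $n(x_2,x_3) > 0$. Since these are positive integers, $n(x_1,x_2) + n(x_2,x_3) \geq 2$, and the preceding lemma then gives $n(x_1,x_3) \geq n(x_1,x_2) + n(x_2,x_3) - 1 \geq 1 > 0$, which is exactly (a). For part (b), I would argue symmetrically: if $n(x_1,x_2), n(x_2,x_3) < 0$ then $n(x_1,x_2) + n(x_2,x_3) \leq -2$, so the lemma gives $n(x_1,x_3) \leq n(x_1,x_2) + n(x_2,x_3) + 1 \leq -1 < 0$. Alternatively, (b) follows from (a) applied to the triple $x_3, x_2, x_1$ together with the antisymmetry $n(x_i,x_j) = -n(x_j,x_i)$ of Lemma \ref{anti-sym}, which turns the hypotheses of (b) into those of (a) and the conclusion back again after a sign flip.

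There is essentially no obstacle here: the substantive work — the bound $|n(x_1,x_3) - (n(x_1,x_2)+n(x_2,x_3))| = 1$ — was carried out in the previous lemma via the three-case analysis at the ``common intersection place'' of $\alpha(x_1,x_2)$, $\alpha(x_1,x_3)$, $\alpha(x_2,x_3)$. The only point to handle with care when writing this up is to explicitly invoke integrality (equivalently, the parity statement hidden inside Lemma \ref{nonzero}); without it, a discrepancy of $\pm 1$ could in principle pull a nonnegative sum down to $0$ or below, and the implication would break.
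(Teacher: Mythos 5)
Your proposal is correct and matches the paper, which states Corollary \ref{linear} as an immediate consequence of the preceding lemma: integrality forces $n(x_1,x_2)+n(x_2,x_3)\geqslant 2$ (resp. $\leqslant -2$), so a discrepancy of $\pm 1$ cannot change the sign. Your remark that integrality must be invoked explicitly is a fair point of care, but the argument is the same one the paper intends.
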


Let $\stackrel{_n}{<}$ be the order on $End_+(L)$ defined by
$x_1 \stackrel{_n}{<} x_2$ if $n(x_1,x_2) < 0$ for all $x_1,x_2 \in End_+(L)$.
By Lemma \ref{anti-sym}, Lemma \ref{nonzero} and Corollary \ref{linear},
we have

\begin{cor}\label{order}
	$\stackrel{_n}{<}$ is a linear order on $End_+(L)$.
\end{cor}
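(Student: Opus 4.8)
The plan is to verify directly that the relation $\stackrel{_n}{<}$ satisfies the three defining properties of a strict linear order on $End_+(L)$: connectedness (any two distinct ends are comparable), asymmetry, and transitivity. All three will follow immediately by assembling Lemma \ref{anti-sym}, Lemma \ref{nonzero}, and Corollary \ref{linear}; no further geometric input about $L$ is needed, since the substantive work has already been carried out in those statements.

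First I would check connectedness together with asymmetry. Let $x_1, x_2 \in End_+(L)$ be distinct. By Lemma \ref{nonzero} we have $n(x_1,x_2) \ne 0$, so either $n(x_1,x_2) < 0$, giving $x_1 \stackrel{_n}{<} x_2$, or $n(x_1,x_2) > 0$, in which case Lemma \ref{anti-sym} yields $n(x_2,x_1) = -n(x_1,x_2) < 0$, i.e. $x_2 \stackrel{_n}{<} x_1$. Hence any two distinct ends are comparable. For asymmetry, if $x_1 \stackrel{_n}{<} x_2$ then $n(x_1,x_2) < 0$, so again by Lemma \ref{anti-sym} we get $n(x_2,x_1) = -n(x_1,x_2) > 0$, which forbids $x_2 \stackrel{_n}{<} x_1$. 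In particular the two alternatives above are mutually exclusive, so exactly one of $x_1 \stackrel{_n}{<} x_2$ and $x_2 \stackrel{_n}{<} x_1$ holds.

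Next I would establish transitivity. Suppose $x_1 \stackrel{_n}{<} x_2$ and $x_2 \stackrel{_n}{<} x_3$, i.e. $n(x_1,x_2) < 0$ and $n(x_2,x_3) < 0$. Then Corollary \ref{linear}(b) gives $n(x_1,x_3) < 0$, that is $x_1 \stackrel{_n}{<} x_3$. Together with the previous paragraph this shows that $\stackrel{_n}{<}$ is a strict linear order on $End_+(L)$.

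As for the main obstacle: there is essentially none remaining at this point — Corollary \ref{order} is a routine consequence of the preceding results, whose proofs (the parity argument in Lemma \ref{nonzero} and the case analysis behind Corollary \ref{linear}) contain all the real content. The only mild subtlety worth flagging is that $n(\cdot,\cdot)$ is defined only on ordered pairs of \emph{distinct} positive ends, so irreflexivity of $\stackrel{_n}{<}$ is built into the set-up rather than being something that needs a separate argument.
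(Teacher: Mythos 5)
Your proposal is correct and is exactly the argument the paper intends: the corollary is stated there as an immediate consequence of Lemma \ref{anti-sym}, Lemma \ref{nonzero} and Corollary \ref{linear}, and you have simply spelled out how totality, asymmetry and transitivity each follow from those three statements. No gap; the observation that irreflexivity is built into the definition of $n(\cdot,\cdot)$ on distinct ends is a reasonable point to flag.
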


\begin{lm}\label{preserve}
	The action of $G$ on $End_+(L)$ preserves the order $\stackrel{_n}{<}$ on $End_+(L)$.
\end{lm}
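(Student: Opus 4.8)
The plan is to prove the stronger statement that $n(g(x_1),g(x_2)) = n(x_1,x_2)$ for every $g \in G$ and all distinct positive ends $x_1,x_2$ of $L$. Granting this, the lemma is immediate: $x_1 \stackrel{_n}{<} x_2$ means $n(x_1,x_2) < 0$, which then holds if and only if $n(g(x_1),g(x_2)) < 0$, i.e. if and only if $g(x_1) \stackrel{_n}{<} g(x_2)$ (recall that $g$ restricts to an action on $End_+(L)$ because it is orientation-preserving).

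The first step is to check that $g$ carries the broken-curve construction of Subsection \ref{subsection 3.1} equivariantly. Since $g\colon L \to L$ is an orientation-preserving homeomorphism, it sends rays to rays and a ray representing an end $x$ to one representing $g(x)$; it sends cataclysms to cataclysms, and, respecting the orientation, it sends positive cataclysms to positive cataclysms and negative cataclysms to negative cataclysms (the sign of a cataclysm, Definition \ref{positive cataclysm}, and the sidedness of a point in $L - \{u\}$ are defined purely from the orientation and the separation structure, both preserved by $g$). It also preserves the relation ``$t$ separates $u$ and $v$''. Combined with the uniqueness of the sequence $t_1,\ldots,t_n$ satisfying Condition \ref{condition 1}$\sim$\ref{condition 4}, the uniqueness of the orientations fixed in Convention \ref{trivial path orientation}, and the uniqueness of $\alpha(x_1,x_2)$ in Construction \ref{cusped line}, this yields $g(\alpha(x_1,x_2)) = \alpha(g(x_1),g(x_2))$; moreover, if $t_1,\ldots,t_n$, $\gamma_1,\ldots,\gamma_{n/2}$, $\mu_1,\ldots,\mu_{n/2-1}$ are the data attached to $\alpha(x_1,x_2)$, then $g(t_1),\ldots,g(t_n)$, $g(\gamma_1),\ldots,g(\gamma_{n/2})$, $g(\mu_1),\ldots,g(\mu_{n/2-1})$ are the data attached to $\alpha(g(x_1),g(x_2))$. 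In particular $g$ induces a bijection between the cusps of $\alpha(x_1,x_2)$ and those of $\alpha(g(x_1),g(x_2))$, sending the cusp $(t_{2k},t_{2k+1}) \subset \mu_k$ to $(g(t_{2k}),g(t_{2k+1})) \subset g(\mu_k)$.

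The second step is to see that this bijection preserves signs of cusps, and this is exactly where the orderability hypothesis enters. By Definition \ref{cusped path}, $(t_{2k},t_{2k+1})$ is a positive cusp if and only if $t_{2k} \stackrel{_{\mu_k}}{<} t_{2k+1}$; since $L$ has orderable cataclysm with respect to the $\pi_1$-action, Assumption \ref{has orderable cataclysm} gives $t_{2k} \stackrel{_{\mu_k}}{<} t_{2k+1}$ if and only if $g(t_{2k}) \stackrel{_{g(\mu_k)}}{<} g(t_{2k+1})$, i.e. if and only if $(g(t_{2k}),g(t_{2k+1}))$ is a positive cusp of $\alpha(g(x_1),g(x_2))$. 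Hence $g$ matches positive cusps with positive cusps and negative cusps with negative cusps, so $n(g(x_1),g(x_2)) = n(x_1,x_2)$, which completes the proof.

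I expect the only real point of care to be the equivariance identity $g(\alpha(x_1,x_2)) = \alpha(g(x_1),g(x_2))$ of the first step: one has to make sure that every ingredient of the construction (the sign of a cataclysm, the orientation conventions for trivial segments, the inductive jumps across cusps) is natural under orientation-preserving homeomorphisms of $L$. This is formal once the relevant uniqueness statements are invoked, but it is the place where the verification must be done carefully.
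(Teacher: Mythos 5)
Your proposal is correct and follows essentially the same route as the paper's own proof: both rest on the equivariance $\alpha(g(x_1),g(x_2)) = g(\alpha(x_1,x_2))$ of the broken-curve construction together with Assumption \ref{has orderable cataclysm} to conclude that cusp signs, and hence $n(\cdot,\cdot)$, are $\pi_1$-equivariant. The paper leaves the naturality of the construction implicit, whereas you spell it out, but the underlying argument is the same.
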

\begin{proof}
	Recall from Assumption \ref{has orderable cataclysm},
	for any cataclysm $\mu$ of $L$ and $u,v \in \mu$ with 
	$u \stackrel{_\mu}{<} v$,
	we have $g(u) \stackrel{_{g(\mu)}}{<} g(v)$ for all $g \in G$.
	Hence the positivity and negativity for cusps are
	$\pi_1$-equivariant,
	and therefore $n(x_1,x_2)$ is $\pi_1$-equivariantly defined on $End_+(L)$.
	So $\stackrel{_n}{<}$ is preserved by the action of $G$.
\end{proof}

\begin{lm}\label{nontrivial}
	$G$ acts on $End_+(L)$ nontrivially.
\end{lm}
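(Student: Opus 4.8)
The plan is to argue by contradiction. Suppose $G$ acts trivially on $End_+(L)$, i.e. $g(x)=x$ for every $g\in G$ and every positive end $x$ of $L$; I will exhibit a single point of $L$ fixed by all of $G$, which is impossible. The impossibility is the classical fact that the $\pi_1$-action on $L$ has no global fixed point: if a leaf $\lambda$ of $\widetilde{\mathcal F}$ were fixed by every deck transformation, then (deck transformations act freely, and leaves of a taut foliation of an irreducible $3$-manifold are $\pi_1$-injective) $\lambda$ would be the universal cover of the leaf $\Lambda$ of $\mathcal F$ onto which it projects, with $G$ acting effectively on $\lambda$ with quotient $\Lambda$; hence $G\cong\pi_1(\Lambda)$ would be the fundamental group of a surface, which is absurd since $M$ is a closed aspherical $3$-manifold (a taut foliation forces $\pi_1(M)$ to be infinite, so $M$ is aspherical and $\pi_1(M)$ has cohomological dimension $3$).

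Since $\mathcal F$ has two-sided branching, $L$ has at least two positive ends; fix distinct $x_1,x_2\in End_+(L)$ and form the embedded broken curve $\alpha(x_1,x_2)$ of Construction \ref{cusped line}, with segments $\gamma_1,\dots,\gamma_{n/2}$ and a cusp between each pair of consecutive segments. By Lemma \ref{nonzero}, $n(x_1,x_2)\neq 0$, and since $n(x_1,x_2)$ is the signed count of cusps of $\alpha(x_1,x_2)$, this curve has at least one cusp; in particular $\gamma_1\neq\alpha(x_1,x_2)$. Thus the first segment $\gamma_1$ runs from the ideal end $x_1$ to the first point $t_2\in L$ of the first cusp, i.e. $\gamma_1$ is an embedded ray of $L$ representing $x_1$ with a genuine endpoint $t_2\in L$. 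It is here that we use that $x_1,x_2$ are \emph{positive} ends: this is what makes Lemma \ref{nonzero} applicable and so guarantees a cusp; for two ends whose broken curve had no cusp, $\gamma_1$ would be a bi-infinite arc with no endpoint in $L$ and the argument would not start.

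Now $\alpha(x_1,x_2)$, together with its decomposition into segments and cusps, is canonically determined by the pair $\{x_1,x_2\}$ using only the topology of $L$ (the points separating the chosen basepoints, the unique embedded arcs joining consecutive $t_i$, and the cataclysms), so it is natural under homeomorphisms of $L$: for every $g\in G$ one has $g(\alpha(x_1,x_2))=\alpha(g(x_1),g(x_2))$, with $g$ carrying segments to segments. Under the standing assumption $g(x_1)=x_1$, $g(x_2)=x_2$, this gives $g(\alpha(x_1,x_2))=\alpha(x_1,x_2)$; since $\gamma_1$ is the unique segment of $\alpha(x_1,x_2)$ incident to the ideal end $x_1$, it follows that $g(\gamma_1)=\gamma_1$; and since $\gamma_1$ is homeomorphic to $[0,+\infty)$ with $t_2$ corresponding to the endpoint $0$, every self-homeomorphism of $\gamma_1$ fixes $t_2$. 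Hence $g(t_2)=t_2$ for all $g\in G$, so $t_2$ is a global fixed point of the $\pi_1$-action on $L$ — contradiction. Therefore $G$ acts nontrivially on $End_+(L)$.

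The only genuinely delicate point is the input invoked at the outset, that the $\pi_1$-action on $L$ has no global fixed point; it is standard but relies on $\pi_1$-injectivity of the leaves and the asphericity of $M$, so it should be stated carefully. Everything else is a formal consequence of the naturality of $\alpha(x_1,x_2)$ and of Lemma \ref{nonzero}.
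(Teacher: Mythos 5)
Your proof is correct and follows essentially the same route as the paper's: both use the naturality of $\alpha(x_1,x_2)$ under homeomorphisms of $L$ to show that an element fixing two positive ends must preserve the broken curve between them and hence fix a cusp point, and then invoke the absence of a global fixed point for the $\pi_1$-action on $L$. The only cosmetic differences are that the paper selects a pair of ends with $|n(x_1,x_2)|=1$ and uses its unique cusp, whereas you take an arbitrary pair and use the endpoint of the first segment (which sidesteps having to justify that such a pair exists), and that you spell out the no-global-fixed-point fact that the paper leaves implicit.
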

\begin{proof}
	Recall that $\mathcal{F}$ has two-sided branching
	and $L$ has infinitely many positive ends.
	We choose $x_1, x_2 \in End_+(L)$ such that
	$|n(x_1,x_2)| = 1$.
	Then $\alpha(x_1,x_2)$ has exactly one cusp.
	We denote this cusp by $(u,v)$.
	
	We choose $g \in G$ such that
	$g(u) \ne u$.
	If $g(x_1) = x_1$ and $g(x_2) = x_2$,
	then $g$ takes $\alpha(x_1,x_2)$ to itself,
	and thus $g$ fixes both $u$ and $v$.
	This contradicts to $g(u) \ne u$.
	So either $g(x_1) \ne x_1$ or $g(x_2) \ne x_2$.
	Therefore,
	$G$ acts on $End_+(L)$ nontrivially.
\end{proof}

Combining Lemma \ref{preserve} and Lemma \ref{nontrivial},
$G$ has a nontrivial quotient acting on $End_+(L)$ effectively that
preserves the order $\stackrel{_n}{<}$ on $End_+(L)$.
Thus,
$G$ has a nontrivial left orderable quotient.
By \hyperref[BRW]{[BRW, Theorem 3.2]},
$G$ is left orderable.

Similar to the order $\stackrel{_n}{<}$ on $End_+(L)$,
we can assign a linear order $\stackrel{_m}{<}$ to $End_-(L)$ which is invariant under the action of $G$.
For $x,y \in End(L)$,
define $x \prec y$ if one of the following conditions holds:

(1)
$x \in End_-(L)$, $y \in End_+(L)$. 

(2)
$x, y \in End_+(L)$ and 
$x \stackrel{_n}{<} y$.

(3)
$x, y \in End_-(L)$ and 
$x \stackrel{_m}{<} y$.

It's not hard to see that $\prec$ is a linear order on $End(L) = End_-(L) \cup End_+(L)$ and
$\prec$ is invariant under the action of $G$.

We have completed the proof of Theorem \ref{orderable cataclysm}.
In fact,
we can construct a left-invariant order of $G$.
We describe this construction in the next subsection.

\subsection{The construction of a left-invariant order of $G$}\label{subsection 3.3}

Let $$H = \{g \in G \mid g(y) = y \text{ for all } y \in End_+(L)\}.$$
Note that $H$ is a normal subgroup of $G$,
and $G / H$ acts on $End_+(L)$ effectively.
Let $N = G / H$.
By Lemma \ref{preserve},
the action of $N$ on $End_+(L)$ preserves the linear order $\stackrel{_n}{<}$ on $End_+(L)$.
Recall that if 
a group acts effectively on a linearly ordered set preserving the linear order,
then we can obtain a left-invariant order of this group
(see, for example, \hyperref[BRW]{[BRW, Theorem 2.4, proof]}).
So we can obtain a left-invariant order $\stackrel{_N}{<}$ of $N$ from 
the action of $N$ on $End_+(L)$.

If $H = \{1\}$,
then $G = N$ and $\stackrel{_N}{<}$ is a left-invariant order of $G$.
Now assume $H \ne \{1\}$.
We choose $x_1, x_2 \in End_+(L)$ with $|n(x_1,x_2)| = 1$.
Since $H$ fixes both of $x_1, x_2$,
every element of $H$ takes $\alpha(x_1,x_2)$ to itself, 
and thus $H$ fixes the two points in the unique cusp of $\alpha(x_1,x_2)$.
We choose a point $\lambda$ in this cusp.
Then $H \subseteq Stab_G(\lambda)$.
Note that $Stab_G(\lambda)$ is the fundamental group of an orientable surface.
We can choose a left-invariant order of $Stab_G(\lambda)$
(see, for example, \hyperref[BRW]{[BRW, Section 7]}),
which induces a left-invariant order $\stackrel{_H}{<}$ of $H$.
As shown in \hyperref[BRW]{[BRW, Lemma 2.3]},
the left-invariant order $\stackrel{_H}{<}$ of $H$ and
the left-invariant order $\stackrel{_N}{<}$ of $N = G / H$ define
a left-invariant order of $G$.

\subsection{The proof of Proposition \ref{Dehn filling}}\label{subsection 3.4}
We prove Proposition \ref{Dehn filling} in this subsection.

Suppose that $M$ admits a pseudo-Anosov flow $X$,
and we may assume that $X$ is not Anosov.
Recall that the \emph{stable foliation} (denoted $\mathcal{F}^{s}$) and
unstable foliation (denoted $\mathcal{F}^{u}$) of $X$ are
a pair of singular foliations whose
leaves are foliated by the flowlines of $X$ with the following properties:
the leaves of $\mathcal{F}^{s}, \mathcal{F}^{u}$ intersect transversely, 
and their intersections are exactly the flowlines of $X$.
A closed orbit of $X$ (i.e. an orbit homeomorphic to $S^{1}$) is called a \emph{singular orbit} if
it is exactly the singular set of a singular leaf of $\mathcal{F}^{s}$ or $\mathcal{F}^{u}$.
Note that $X$ contains finitely many singular orbits,
and each singular leaf of $\mathcal{F}^{s}$ or $\mathcal{F}^{u}$ contains exactly one singular orbit.

Let $\{\gamma_1,\ldots,\gamma_n\}$ denote the set of singular orbits of $X$ and
$\gamma = \bigcup_{i=1}^{n}\gamma_i$.
Let $N(\gamma_i)$ be a regular neighborhood of $\gamma_i$ and $T_i = \partial N(\gamma_i)$.
We denote by $N(\gamma) = \bigcup_{i=1}^{n}N(\gamma_i)$.
We adopt the following convention for the multislopes on $\bigcup_{i=1}^{n} T_i$.

\begin{convention}\rm\label{convention}
	(a)
	We denote by $l_i$ the singular leaf of $\mathcal{F}^{s}$ that
	contains $\gamma_i$.
	Then $l_i \cap \partial T_i$ is a collection of parallel curves in $N(\gamma_i)$.
	We call the slope of them the \emph{preferred framing} on $T_i$.
	
	(b)
	For any two slopes $\alpha, \beta$ on $T_i$,
	we denote by $\Delta(\alpha, \beta)$ the minimal geometric intersection number of $\alpha, \beta$.
	
	(c)
	Let $M_\gamma = M - Int(N(\gamma))$.
	For every multislope 
	$(r_1,\ldots,r_n)$ on $\bigcup_{i=1}^{n} T_i$,
	we denote by
	$M_\gamma(r_1,\ldots,r_n)$ the Dehn filling of $M - Int(N(\gamma))$ along $\partial N(\gamma)$ with the multislope $(r_1,\ldots,r_n)$.
\end{convention}

We complete the proof of Proposition \ref{Dehn filling} in the remainder of this subsection:

\begin{Dehn filling}\label{filling}
	Suppose that $\mathcal{F}^{s}$ is co-orientable.
	Let $N = M_\gamma(r_1,\ldots,r_n)$.
	Let $s_i$ denote the preferred framing on $T_i$.
	If $\Delta(r_i, s_i)= 1$ for every $1 \leqslant i \leqslant n$,
	then $N$ admits a co-orientable taut foliation with orderable cataclysm.
\end{Dehn filling}

We can split open $\mathcal{F}^{s}$ along the singular leaves to obtain
an essential lamination $\mathcal{L}^{s}$ of $M$.
Then $\mathcal{L}^{s}$ has 
$n$ complementary regions which contain $\gamma_1,\ldots,\gamma_n$ respectively,
and each one of them is a (finite sided) ideal polygon bundle over $S^{1}$.
We may assume every $N(\gamma_i)$ is contained in a complementary region of $\mathcal{L}^{s}$.
Then $\mathcal{L}^{s} \subseteq M - Int(N(\gamma))$,
and thus, 
$\mathcal{L}^{s}$ can be canonically identified with a lamination of
$N = M_\gamma(r_1,\ldots,r_n)$
(which we still denote by $\mathcal{L}^{s}$).
Since $\mathcal{F}^{s}$ is a co-orientable singular foliation of $M$,
$\mathcal{L}^{s}$ is a co-orientable lamination in $M$ and thus
is also a co-orientable lamination in $N$.
Henceforth $\mathcal{L}^{s}$ will always denote the lamination in $N$.

Note that $r_i \ne s_i$ for every $1 \leqslant i \leqslant n$
since $\Delta(r_i, s_i)= 1$.
So the complementary regions of $\mathcal{L}^{s}$ in $N$ are ideal polygon bundles over $S^{1}$.
Because $\mathcal{L}^{s}$ is co-orientable,
all ideal polygon fibers of these bundles have an even number of sides.
Recall that if a complementary region of a lamination in a $3$-manifold is
a bundle of ideal polygons with an even number of sides over $S^{1}$,
then we can fill it with monkey saddles
(see, for example, \hyperref[C]{[C, Example 4.19, Example 4.20]}).
Now we fill every complementary region of $\mathcal{L}^{s}$ in $N$ with
monkey saddles to obtain a co-orientable foliation $\mathcal{F}$ of $N$.
$\mathcal{F}$ is taut since it contains no compact leaf.
For more detail in this process,
we refer to
\hyperref[G2]{[G2]},
or see also \hyperref[C]{[C, Example 4.22]}).

We denote by $R_1,\ldots,R_n$ the complementary regions of $\mathcal{L}^{s}$ in $N$ for which
$T_i \subseteq R_i$.
Let $q: \widetilde{N} \to N$ be the universal covering space of $N$.
Let $\widetilde{\mathcal{L}^{s}}$ be the pull-back lamination of $\mathcal{L}^{s}$ in $\widetilde{N}$ and
let $\widetilde{\mathcal{F}}$ be the pull-back foliation of $\mathcal{F}$ in $\widetilde{N}$.
We denote by $Aut(\widetilde{N})$ the deck transformation group of $\widetilde{N}$.
We fix a co-orientation on $\mathcal{F}$ and an induced co-orientation on $\widetilde{\mathcal{F}}$.

It remains to show that $\mathcal{F}$ has orderable cataclysm:

\begin{lm}\label{orderable}
	Let $\mu$ a cataclysm of $\widetilde{\mathcal{F}}$.
	Then there is a linear order of the leaves in $\mu$ preserved by 
	the stabilizer subgroup of $Aut(\widetilde{N})$ with respect to $\mu$.
\end{lm}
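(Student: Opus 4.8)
The plan is to read off the cataclysm structure of $\widetilde{\mathcal F}$ from that of the essential lamination $\mathcal L^{s}$ and then invoke Fenley's Theorem \ref{pseudo-anosov}. The key structural observation is that passing from $\mathcal L^{s}$ to $\mathcal F$ by monkey‑saddle‑filling the complementary regions changes the leaf space only inside the complementary regions: there is a natural $Aut(\widetilde N)$-equivariant collapsing surjection from $L(\mathcal F)$ onto the leaf space of $\widetilde{\mathcal L^{s}}$, crushing each lifted complementary region together with its monkey‑saddle leaves to a single branch configuration, under which every cataclysm $\mu$ of $\widetilde{\mathcal F}$ is of exactly one of two types. \emph{Type (a):} $\mu$ is disjoint from every lift of every $R_i$, so all leaves of $\mu$ belong to $\widetilde{\mathcal L^{s}}$ and $\mu$ is a cataclysm (branching leaf) of the leaf space of $\widetilde{\mathcal L^{s}}$, with the same stabilizer in $Aut(\widetilde N)$. \emph{Type (b):} $\mu$ meets some lift $\widetilde R_i$; since every monkey‑saddle leaf of $\widetilde R_i$ limits onto all of the finitely many side leaves of $\widetilde R_i$, and these side leaves are pairwise non‑separated in $L(\mathcal F)$, the cataclysm $\mu$ is precisely the set of side leaves of $\widetilde R_i$, and $\widetilde R_i$ is canonically determined by $\mu$.

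First I would make this dichotomy precise, and in parallel record that $\mathcal L^{s}$ is realized in $N$ as the split‑open stable foliation of a pseudo‑Anosov flow: since $\Delta(r_i,s_i)\geqslant 1$ the slope $r_i$ is admissible, so by the standard surgery theory for pseudo‑Anosov flows along closed orbits $X$ surgers to a pseudo‑Anosov flow $X'$ on $N$ whose stable foliation splits open to exactly $\mathcal L^{s}$. Applying Fenley's theorem (in the pseudo‑Anosov form of \hyperref[F3]{[F3]}, cf. \hyperref[CD]{[CD, Example 3.6]}) to $X'$, every branching leaf of the leaf space of $\widetilde{\mathcal L^{s}}$ carries a linear order preserved by its stabilizer in $Aut(\widetilde N)$. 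For a type‑(a) cataclysm $\mu$ I would simply pull this order back along the collapsing map; as that map is $Aut(\widetilde N)$-equivariant and $Stab_{Aut(\widetilde N)}(\mu)$ equals the stabilizer of the corresponding branching leaf, the pulled‑back order is $Stab_{Aut(\widetilde N)}(\mu)$-invariant.

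For the type‑(b) cataclysms I would argue as follows. Since $\mu$ determines $\widetilde R_i$, we have $Stab_{Aut(\widetilde N)}(\mu)\subseteq Stab_{Aut(\widetilde N)}(\widetilde R_i)\cong\pi_1(R_i)\cong\mathbb Z$, and $\pi_1(R_i)$ acts on the finite set of side leaves of $\widetilde R_i$ through the monodromy $\phi_i$ of the ideal‑polygon bundle $R_i\subset N$ over $S^{1}$. The hypothesis $\Delta(r_i,s_i)=1$ should be exactly what forces $\phi_i$ to fix every side of the fiber polygon — for $\Delta(r_i,s_i)\geqslant 2$ the monodromy is a nontrivial cyclic rotation of the sides, which preserves no linear order, so this is the one place the distance‑one hypothesis is genuinely needed. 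Granting $\phi_i=\mathrm{id}$, the group $Stab_{Aut(\widetilde N)}(\mu)$ fixes every leaf of $\mu$, so \emph{any} enumeration of the side leaves of $\widetilde R_i$ is a $Stab_{Aut(\widetilde N)}(\mu)$-invariant linear order; choosing one such enumeration for a representative of each of the finitely many $Aut(\widetilde N)$-orbits of complementary‑region lifts (namely $R_1,\dots,R_n$) and propagating it equivariantly handles all type‑(b) cataclysms simultaneously. Since every cataclysm is positive or negative (Definition \ref{positive cataclysm}) and the type‑(a) cataclysms and the type‑(b) cataclysms of different orbits do not interact, the two constructions combine into a single $Aut(\widetilde N)$-equivariant assignment of linear orders to all cataclysms of $\widetilde{\mathcal F}$, which is exactly what Definition \ref{orderable cataclysm definition} requires; with Theorem \ref{orderable cataclysm} this yields Proposition \ref{Dehn filling}.

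The main obstacle is the monodromy computation underlying step (b): one must pin down the bundle structure of $R_i$ inside $N$ explicitly enough to verify that distance‑one filling trivializes the monodromy action on the side leaves, whereas larger $\Delta(r_i,s_i)$ produces a nontrivial rotation. A secondary, more bookkeeping‑style difficulty is justifying the collapsing map $L(\mathcal F)\to$ (leaf space of $\widetilde{\mathcal L^{s}}$) and the identification of $\mathcal L^{s}$ with the split‑open stable foliation of the surgered flow $X'$ carefully enough that Fenley's theorem applies with the correct equivariance; I expect this to be routine but somewhat lengthy, and it is where one must keep track of the fact that the universal cover of $N$ restricts over $M_\gamma$ to a cover different from the universal cover of $M$.
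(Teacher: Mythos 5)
Your Case (b) (a cataclysm lying on the boundary of a single lifted complementary region $\widetilde{R_i}$) is essentially the paper's Case 1: the hypothesis $\Delta(r_i,s_i)=1$ is used, via the covering-space observation that the $\Delta(r_i,\alpha)$-th power of a generator of the stabilizer of $\widetilde{R_i}$ fixes each component of the preimage in $\widetilde{T_i}$ of a curve system of slope $\alpha$, to conclude that the stabilizer fixes every side leaf, so any linear order is invariant. Two corrections there: such a cataclysm is not the full set of $2k$ side leaves of $\widetilde{R_i}$ but exactly the $k$ of them with a fixed transverse orientation (the outward-co-oriented and inward-co-oriented side leaves are separated from one another by the monkey-saddle leaves, so they form two distinct cataclysms $\mu^{+}$ and $\mu^{-}$); and the ``monodromy computation'' you defer as the main obstacle is precisely the paper's Fact about powers of deck transformations permuting lifts of curves on $\widetilde{T_i}$, i.e.\ it is the content of the case rather than a side issue.

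The genuine gap is in your Type (a). You propose to realize $\mathcal{L}^{s}$ as the split-open stable foliation of a pseudo-Anosov flow $X'$ on the filled manifold $N$ produced by ``standard surgery theory,'' and then to quote Fenley's theorem for $X'$. That Dehn filling along the singular orbits with an arbitrary multislope satisfying only $\Delta(r_i,s_i)=1$ (in particular with either sign on each component, which is exactly the generality the proposition is after) yields a genuine pseudo-Anosov flow is a substantial dynamical assertion that is not available as a black box, and Theorem \ref{pseudo-anosov} as stated covers only Anosov flows. The paper deliberately avoids this: it transfers the flow and the pair of singular foliations to $N$ purely topologically (as $Y$, $\mathcal{E}^{s}$, $\mathcal{E}^{u}$, with no pseudo-Anosov property claimed or needed), shows that a cataclysm not of Case 1 type has pairwise distinct ``original leaves'' in $\widetilde{\mathcal{E}^{s}}$ approached simultaneously by a sequence of non-singular leaves (Lemma \ref{approaching}), and then runs Fenley's ordering construction by hand, ordering the leaves of $\mu$ by the transverse order of the unstable leaves through them inside a nearby common non-singular stable leaf, using the flow orientation and the co-orientation of $\mathcal{E}^{s}$ for equivariance. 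Your collapsing-map formalism also does not obviously cover cataclysms that mix boundary leaves of $\widetilde{\mathcal{L}^{s}}$ from different complementary regions with non-boundary leaves; the paper's ``original leaf'' device treats these uniformly. As written, your argument for the key case rests on an unproved (and in this generality unavailable) surgery theorem, so it does not go through.
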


To prove Lemma \ref{orderable},
we consider two different cases for $\mu$.

\begin{case2}\rm\label{case2 1}
	Assume that there is a component $\widetilde{R_i}$ of $q^{-1}(R_i)$ for some $1 \leqslant i \leqslant n$ such that
	every leaf in $\mu$ is a boundary leaf of $\widetilde{\mathcal{L}^{s}}$ contained in $\widetilde{R_i}$.
\end{case2}

Let $$\mu^{+} = \{\text{boundary leaves of } \widetilde{\mathcal{L}^{s}}
\text{ in } \partial \widetilde{R_i} \text{ with transverse orientation pointing out of } \widetilde{R_i}\},$$
$$\mu^{-} = \{\text{boundary leaves of } \widetilde{\mathcal{L}^{s}}
\text{ in } \partial \widetilde{R_i} \text{ with transverse orientation pointing in } \widetilde{R_i}\}.$$
We can observe that
each of $\mu^{+}, \mu^{-}$ is a cataclysm of $\widetilde{\mathcal{F}}$.
So $\mu$ is exactly either $\mu^{+}$ or $\mu^{-}$.

Let $H = \{g \in Aut(\widetilde{N}) \mid g(\widetilde{R_i}) = \widetilde{R_i}\}$.
Then $H$ is a cyclic group,
and $H$ is also the stabilizer subgroup of $Aut(\widetilde{N})$ with respect to 
both $\mu^{+}$ and $\mu^{-}$.
We denote by $\widetilde{T_i}$ the component of $q^{-1}(T_i)$ contained in $\widetilde{R_i}$.
Note that $\widetilde{T_i} \cong S^{1} \times \mathbb{R}$ and 
the stabilizer subgroup of $Aut(\widetilde{N})$ with respect to $\widetilde{T_i}$ is also $H$.
It's not hard to verify that

\begin{fact}\rm\label{permutation}
	Let $\alpha_1, \ldots, \alpha_k$ be a collection of disjoint essential simple closed curves on $T_i$ that
	have the same slope $\alpha$.
	Let $A =q^{-1}(\bigcup_{j=1}^{k}\alpha_j) \cap \widetilde{T_i}$. 
	Then $H$ acts on $\{\text{the components of } A\}$ by permutation.
	For every $x \in H$,
	the $\Delta(r_i,\alpha)$th-power of $x$ fixes every component of $A$.
\end{fact}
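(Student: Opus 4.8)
The plan is to understand everything through the covering map $q|_{\widetilde{T_i}}\colon \widetilde{T_i}\to T_i$. First, since $\alpha_1,\dots,\alpha_k$ are disjoint essential simple closed curves on the torus $T_i$, they are pairwise isotopic, hence all carry the same slope $\alpha$; this is the first assertion. Next I would pin down which cover of $T_i$ the annulus $\widetilde{T_i}\cong S^{1}\times\mathbb{R}$ is. The slope $r_i$ bounds a meridian disk of the solid torus glued to $T_i$ in forming $N=M_\gamma(r_1,\dots,r_n)$, so the loop $r_i$ is null-homotopic in $N$ and therefore lifts to a loop in $\widetilde{N}$; choosing the basepoint appropriately, this lift lies in the component $\widetilde{T_i}$ of $q^{-1}(T_i)$. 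Hence $\langle r_i\rangle\subseteq\ker\big(\pi_1(T_i)\to\pi_1(N)\big)$, and since $\widetilde{T_i}\cong S^{1}\times\mathbb{R}$ this kernel is an infinite cyclic, primitive subgroup of $\pi_1(T_i)\cong\mathbb{Z}^{2}$; as $\langle r_i\rangle$ is already primitive, the kernel equals $\langle r_i\rangle$. Thus $q|_{\widetilde{T_i}}$ is the infinite cyclic cover of $T_i$ corresponding to the subgroup $\langle r_i\rangle$, and its deck group — which coincides with $H$, since deck transformations act freely — is $\pi_1(T_i)/\langle r_i\rangle\cong\mathbb{Z}$.

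Being made of deck transformations, $H$ commutes with $q$ and preserves $\widetilde{T_i}$, so it preserves $A=q^{-1}(\bigcup_j\alpha_j)\cap\widetilde{T_i}$ and permutes its components, giving the second assertion. For the last one I would argue as follows. If $\alpha=r_i$ then $\Delta(r_i,\alpha)=0$ and the claim that $x^{0}$ (the identity) fixes every component is vacuous, so assume $\alpha\ne r_i$ and set $d=\Delta(r_i,\alpha)\geqslant1$. Fix one $\alpha_j$ and a component $C$ of $q^{-1}(\alpha_j)\cap\widetilde{T_i}$. The restriction $C\to\alpha_j$ of $q|_{\widetilde{T_i}}$ is again a covering, hence surjective, so the $H$-orbit of $C$ is open and closed in $q^{-1}(\alpha_j)\cap\widetilde{T_i}$ and surjects onto $\alpha_j$; this forces $H$ to act transitively on the components of $q^{-1}(\alpha_j)\cap\widetilde{T_i}$. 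The number of these components equals the number of double cosets $\langle r_i\rangle\backslash\pi_1(T_i)/\langle\alpha\rangle$, which (all groups abelian) is $[\,\mathbb{Z}^{2}:\langle r_i,\alpha\rangle\,]=\Delta(r_i,\alpha)=d$; one can also read this off the model $\widetilde{T_i}\cong\mathbb{R}^{2}/\langle r_i\rangle$ with $H$ acting by integer translations transverse to $r_i$. Therefore $Stab_H(C)$ has index $d$ in $H\cong\mathbb{Z}$, that is, equals $d\mathbb{Z}$; since $H$ is abelian the same holds for $Stab_H(C')$ for every component $C'$ over $\alpha_j$, and since $d=\Delta(r_i,\alpha)$ depends only on the common slope $\alpha$, this value $d$ is the same for all of $\alpha_1,\dots,\alpha_k$. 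Consequently, for every $x\in H$ the element $x^{d}$ lies in $d\mathbb{Z}$ and fixes every component of $q^{-1}(\alpha_j)\cap\widetilde{T_i}$ for all $j$, hence fixes every component of $A$.

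I expect no serious obstacle here: once the covering $\widetilde{T_i}\to T_i$ is identified with the infinite cyclic cover dual to $r_i$, everything reduces to routine covering space bookkeeping. The only point that needs a little care is verifying that the number of lifts of a slope-$\alpha$ curve is exactly $\Delta(r_i,\alpha)$ and that this count — hence the common stabilizer $d\mathbb{Z}$ — is uniform over $\alpha_1,\dots,\alpha_k$, which holds precisely because these curves all share the slope $\alpha$.
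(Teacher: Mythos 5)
Your proof is correct. The paper itself offers no argument for this Fact (it is introduced with ``It's not hard to verify that''), and your identification of $q|_{\widetilde{T_i}}\colon\widetilde{T_i}\to T_i$ as the regular cover with deck group $H\cong\pi_1(T_i)/\langle r_i\rangle$, followed by the double-coset count $\bigl|\langle r_i\rangle\backslash\pi_1(T_i)/\langle\alpha\rangle\bigr|=\Delta(r_i,\alpha)$ and the resulting stabilizer $d\mathbb{Z}\subseteq H$, is exactly the intended verification; the only steps worth one extra line are that a cyclic subgroup of $\mathbb{Z}^2$ containing the primitive class $r_i$ must equal $\langle r_i\rangle$, and that transitivity of $H$ on the components over $\alpha_j$ follows from regularity of the cover (transitivity on fibers), rather than from the openness/closedness of orbits alone.
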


Recall from Convention \ref{convention},
we denote by $l_i$ the singular leaf of $\mathcal{F}^{s}$ containing $\gamma_i$, 
and $l_i \cap T_i$ is a collection of curves on $T_i$ with slope $s_i$.
Let
$\widetilde{l_i}$ denote the singular leaf of $\widetilde{\mathcal{F}^{s}}$ containing $\widetilde{\gamma_i}$.
Since $\Delta(r_i, s_i)= 1$,
Fact \ref{permutation} implies that
the action of $H$ on $\{\text{components of } \widetilde{l_i} \cap \widetilde{T_i}\}$ is identity.
It follows that every element of $H$ takes every leaf in $\mu^{-} \cup \mu^{+}$ to itself.
So Lemma \ref{orderable} holds in Case \ref{case2 1}.

\begin{case2}\rm\label{case2 2}
	Assume that there is no complementary region $\widetilde{R}$ of 
	$\widetilde{\mathcal{L}^{s}}$ in $\widetilde{N}$ such that
	all leaves in $\mu$ are the boundary leaves of $\widetilde{\mathcal{L}^{s}}$ in $\widetilde{R}$.
\end{case2}

The proof of Lemma \ref{orderable} for Case \ref{case2 2} basically follows from 
\hyperref[F3]{[F3]} or \hyperref[CD]{[CD, 3.4, Example 3.6]}.
In \hyperref[F3]{[F3]},
an explanation is given in terms of
cataclysms for singular foliations.
In \hyperref[CD]{[CD, 3.4, Example 3.6]},
an explanation is given in terms of 
cataclysms for essential laminations.
Now we give the proof in detail as follows.

Let $S_i$ denote the solid torus in $N$ bounded by $T_i$,
and let $\gamma^{'}_{i}$ be a core curve of $S_i$.
Let $\gamma^{'} = \bigcup_{i=1}^{n} \gamma^{'}_{i}$.
Then we can identify $N - \gamma^{'}$ with $M - \gamma$.
There is a flow $Y$ in $N$ induced from the flow $X$ in $M$ such that
$\gamma^{'}_{1}, \ldots, \gamma^{'}_{n}$ are flowlines of $Y$ and
$$Y \mid_{N - \gamma^{'}}  = X \mid_{M - \gamma}.$$
And there is a pair of singular foliations $\mathcal{E}^{s}, \mathcal{E}^{u}$ of $N$ induced from
$\mathcal{F}^{s}, \mathcal{F}^{u}$ such that
$$\mathcal{E}^{s} \mid_{N - \gamma^{'}}  = \mathcal{F}^{s} \mid_{M - \gamma},$$
$$\mathcal{E}^{u} \mid_{N - \gamma^{'}}  = \mathcal{F}^{u} \mid_{M - \gamma}.$$
Similar to $\mathcal{F}^{s}$ and $\mathcal{F}^{u}$,
the leaves of $\mathcal{E}^{s}$ and $\mathcal{E}^{u}$ intersect transversely,
and their intersections are exactly the flowlines of $Y$.
We denote by $\widetilde{Y}$ the pull-back flow of $Y$ in $\widetilde{N}$ and
denote by $\widetilde{\mathcal{E}^{s}}, \widetilde{\mathcal{E}^{u}}$
the pull-back singular foliations of $\mathcal{E}^{s}, \mathcal{E}^{u}$ in
$\widetilde{N}$.
Note that $\mathcal{L}^{s}$ can be obtained from splitting open $\mathcal{E}^{s}$ along
the singular leaves of $\mathcal{E}^{s}$.

\begin{figure}\label{approximate}
	\includegraphics[width=0.4\textwidth]{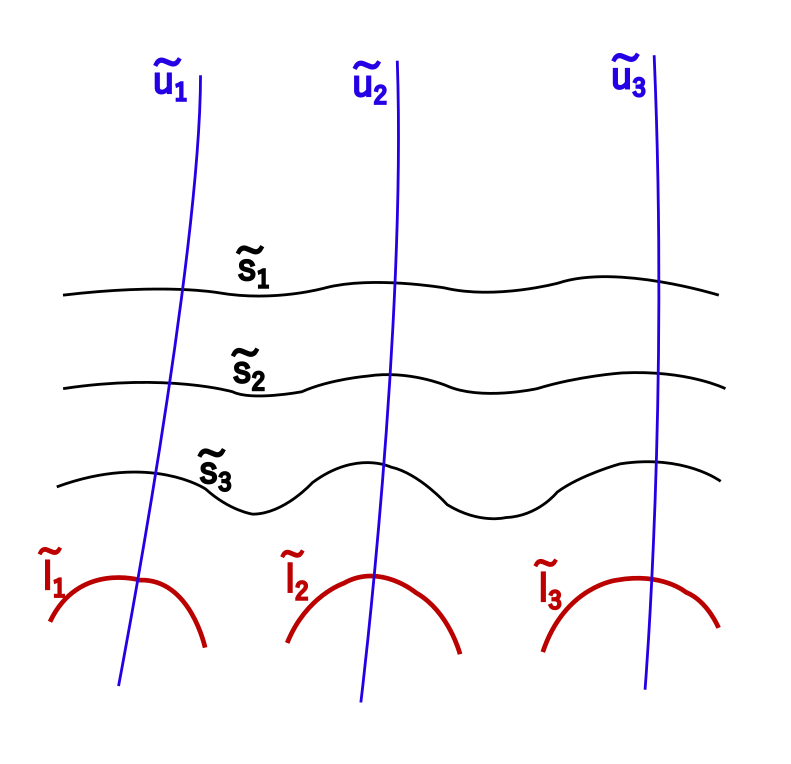}
	\caption{Let $\widetilde{l_1}, \widetilde{l_2}, \widetilde{l_3}$ be
	three leaves in $\mu$.
    We can choose three non-singular leaves $\widetilde{u_1}, \widetilde{u_2}, \widetilde{u_3}$ of 
$\widetilde{\mathcal{E}^{u}}$ with $\widetilde{u_i} \cap \widetilde{l_i} \ne \emptyset$ ($i = 1,2,3$).
    For each leaf of $\widetilde{\mathcal{E}^{s}}$ that intersets all of $\widetilde{u_1}, \widetilde{u_2}, \widetilde{u_3}$,
there is a natural linear order between $\widetilde{u_1}, \widetilde{u_2}, \widetilde{u_3}$ given from this leaf.
In the picture,
we draw the projections of $\widetilde{l_i}, \widetilde{u_j}$ ($i, j \in \{1,2,3\}$) into the orbit space
$\widetilde{N} / \widetilde{Y}$,
and we draw the projections of three non-singular leaves $\widetilde{s_1}, \widetilde{s_2}, \widetilde{s_3}$ of
$\widetilde{\mathcal{E}^{s}}$ that intersect each of $\widetilde{u_1}, \widetilde{u_2}, \widetilde{u_3}$.
The linear orders between $\widetilde{u_1}, \widetilde{u_2}, \widetilde{u_3}$ given from
$\widetilde{s_1}, \widetilde{s_2}, \widetilde{s_3}$ are equal.}
\end{figure}

For a non-singular leaf $\widetilde{s}$ of $\widetilde{\mathcal{E}^{s}}$,
the set of non-singular leaves of $\widetilde{\mathcal{E}^{u}}$ intersecting $\widetilde{s}$ naturally possesses a linear order.
As choosing $\widetilde{s}$ to be sufficiently approximate to the leaves in $\mu$,
this linear order on $\widetilde{s}$ extends to a $\pi_1$-equivariant linear order on $\mu$.
We explain this further below.

\begin{lm}\label{approaching}
	There is a sequence of leaves in $\widetilde{\mathcal{L}^{s}}$ that
	converges to all leaves in $\mu$ simultaneously,
	and each of them is not a boundary leaf of $\widetilde{\mathcal{L}^{s}}$.
\end{lm}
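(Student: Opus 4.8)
The plan is to unwind the definition of a cataclysm in the non-Hausdorff $1$-manifold $L(\widetilde{\mathcal F})$ and translate it back into the flow picture. Recall that $\mathcal L^s$ is obtained from $\mathcal E^s$ by splitting open along singular leaves, so a leaf of $\widetilde{\mathcal L^s}$ is a boundary leaf precisely when it comes from (one side of) a singular leaf of $\widetilde{\mathcal E^s}$, and all other leaves of $\widetilde{\mathcal L^s}$ are in bijection with non-singular leaves of $\widetilde{\mathcal E^s}$. First I would use the general fact recorded just after Definition \ref{cataclysm}: for \emph{any} cataclysm $\mu$ of $L$ there is a sequence $\{t_i\}$ in $L$ converging simultaneously to every point of $\mu$, with $\mu = \overline{\{t_i\}} - \{t_i\}$. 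Applying this to the cataclysm $\mu$ of $\widetilde{\mathcal F}$ gives a sequence of leaves $\ell_i$ of $\widetilde{\mathcal L^s}$ converging to all leaves of $\mu$ at once. The content of Lemma \ref{approaching} is that this sequence can be chosen to avoid boundary leaves of $\widetilde{\mathcal L^s}$.

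The key step is to show that if some $\ell_i$ in the approximating sequence happened to be a boundary leaf, we could perturb it off. A boundary leaf $\ell_i$ of $\widetilde{\mathcal L^s}$ is the split-open image of a singular leaf of $\widetilde{\mathcal E^s}$, i.e. it is one of the (finitely many near that singular orbit) ideal-polygon-side leaves bounding a complementary region $\widetilde R$ of $\widetilde{\mathcal L^s}$. Since we are in Case \ref{case2 2}, the leaves of $\mu$ are \emph{not} all boundary leaves of a single complementary region; hence at least one leaf $m_0 \in \mu$ is a limit of leaves of $\widetilde{\mathcal L^s}$ from a direction not ``trapped'' inside any one complementary region. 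Using that each complementary region is an ideal polygon bundle over $S^1$ (so any boundary leaf $\ell_i$ sits on the frontier of a product region whose interior contains genuinely nearby non-boundary leaves on the correct side), I would replace each boundary $\ell_i$ by an interior leaf $\ell_i'$ obtained by pushing slightly into the complementary region on the side facing $\mu$, and check that $\{\ell_i'\}$ still converges to every leaf of $\mu$. Concretely: in $L$, a boundary leaf that is a limit point of the cataclysm $\mu$ has nearby interior leaves on the same side, because otherwise $\mu$ together with $\ell_i$ (or a leaf of the adjacent complementary region) would violate the maximality clause (2) of Definition \ref{cataclysm}, forcing all of $\mu$ into the boundary leaves of that one region and contradicting the Case \ref{case2 2} hypothesis.

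I would organize the argument as: (i) produce the simultaneous approximating sequence $\{\ell_i\}$ for $\mu$ from the general non-Hausdorff fact; (ii) observe that each complementary region of $\widetilde{\mathcal L^s}$ in $\widetilde N$ is a properly embedded ideal polygon bundle over $\mathbb R$ (lifting the $S^1$ base), so its set of boundary leaves forms a finite union of cataclysms (as already seen in Case \ref{case2 1}), and its interior contains leaves of $\widetilde{\mathcal L^s}$ limiting onto each boundary leaf from inside; (iii) for each index $i$ with $\ell_i$ a boundary leaf, note the side of $\ell_i$ toward which the $t_j$'s accumulate (equivalently, the side containing $\mu$) and pick $\ell_i'$ a nearby non-boundary leaf on that side, inside the product neighborhood; (iv) verify convergence $\ell_i' \to \mu$ is unaffected, using that $\ell_i$ and $\ell_i'$ lie in the same small neighborhoods of the leaves of $\mu$ in $L$; (v) invoke the Case \ref{case2 2} hypothesis to rule out the degenerate possibility that \emph{every} choice keeps us among boundary leaves of one region.

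The main obstacle I expect is step (iv) combined with making (iii) canonical: one must be careful that pushing $\ell_i$ to a non-boundary leaf of $\widetilde{\mathcal L^s}$ on the correct side really keeps it in every prescribed neighborhood of every leaf of $\mu$ simultaneously, since $\mu$ may be infinite and the neighborhoods in a non-Hausdorff $1$-manifold behave subtly. The safeguard is that near a cataclysm the local model (Figure \ref{1-manifold}(a)) shows all leaves of $\mu$ share the same one-sided germ of nearby leaves, and boundary leaves of a complementary region that is a product bundle come with an honest transverse interval of nearby interior leaves; choosing $\ell_i'$ within that interval and within the $i$-th stage neighborhood keeps the simultaneous convergence. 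I do not expect to need any new input about the flow $Y$ beyond the structure of complementary regions of $\mathcal L^s$ already established.
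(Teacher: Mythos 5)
Your overall strategy --- take a simultaneous approximating sequence for the cataclysm and then replace any boundary leaves appearing in it by nearby non-boundary leaves --- is reasonable in outline, but the geometric picture in steps (ii)--(iii) is wrong in a way that breaks the argument. You propose to obtain $\ell_i'$ by ``pushing slightly into the complementary region on the side facing $\mu$,'' and you assert that the interior of a complementary region ``contains leaves of $\widetilde{\mathcal L^s}$ limiting onto each boundary leaf from inside.'' By definition the interior of a complementary region of $\widetilde{\mathcal L^s}$ is disjoint from $\widetilde{\mathcal L^s}$, so there are no leaves there to push to. The leaves accumulating on a boundary leaf $\ell_i$ live on the \emph{other} side, the laminated side, where $\mathcal L^s$ is locally the split-open singular foliation $\mathcal E^s$ and hence carries a continuum of nearby leaves, only countably many of which can be boundary leaves. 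Relatedly, your appeal to the maximality clause (2) of Definition \ref{cataclysm} to produce interior leaves ``on the same side'' does not parse: the terms $\ell_i$ of the approximating sequence are separated from the points of $\mu$, so no violation of maximality is available to exploit there.

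The paper avoids both this confusion and the step (iv) difficulty by working in the singular foliation $\widetilde{\mathcal E^s}$ from the start. Each leaf of $\mu$ is assigned an \emph{original leaf} in $\widetilde{\mathcal E^s}$ (the singular leaf it was split from, or itself if it is not a boundary leaf); the Case \ref{case2 2} hypothesis guarantees these original leaves are pairwise distinct, so they form a non-separated family $\mu_*$ approached simultaneously by a \emph{one-parameter} family $\{l(t)\}_{t\in(0,1]}$ of leaves of $\widetilde{\mathcal E^s}$. Since $\widetilde{\mathcal E^s}$ has only countably many singular leaves, one chooses $t_i\to 0$ with every $l(t_i)$ non-singular; these are canonically identified with non-boundary leaves of $\widetilde{\mathcal L^s}$, and convergence to all of $\mu$ is automatic because one is passing to a subfamily of an already simultaneously convergent family. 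This also disposes of the delicate ``does the perturbed sequence still converge to every leaf of $\mu$ at once'' issue, which your safeguard in step (iv) only gestures at. Your argument could likely be repaired along these lines, but as written the replacement leaves are sought in the wrong place, and the simultaneous-convergence step is not actually established.
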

\begin{proof}
	Let $\widetilde{\lambda}$ be a leaf of $\widetilde{\mathcal{L}^{s}}$.
	Then there is a leaf $\widetilde{\lambda_*}$ of $\widetilde{\mathcal{E}^{s}}$ such that
	(1)
	if $\widetilde{\lambda}$ is not a boundary leaf of $\widetilde{\mathcal{L}^{s}}$,
	then $\widetilde{\lambda_*}$ is a non-singular leaf of $\widetilde{\mathcal{E}^{s}}$ that
	can be canonically identified with $\widetilde{\lambda}$,
	(2)
	if $\widetilde{\lambda}$ is a boundary leaf of $\widetilde{\mathcal{L}^{s}}$,
	then $\widetilde{\lambda_*}$ is a singular leaf of $\widetilde{\mathcal{E}^{s}}$ such that
	$\widetilde{\lambda}$ is obtained from splitting open along $\widetilde{\lambda_*}$.
	We call $\widetilde{\lambda_*}$ the
	\emph{original leaf} of $\widetilde{\lambda}$.
	The assumption of Case \ref{case2 2} guarantees that
	the original leaves of all leaves in $\mu$ are distinct from each other.
	
	We denote by $\mu_* = \{\text{original leaves of leaves in } \mu\}$.
	Then there are leaves $\{l(t) \mid t \in (0,1]\}$ of $\widetilde{\mathcal{E}^{s}}$ such that
	for any point $x$ of some leaf in $\mu_*$,
	there is a collection of points $x_t \in l(t)$ ($t \in (0,1]$) with
	$$\lim_{t \to 0} x_t = x.$$
	Note that $\widetilde{\mathcal{E}^{s}}$ contains only countably many singular leaves.
	So there is a sequence $\{t_i\}_{i \in \mathbb{N}}$ in $(0,1]$ converging to $0$ such that
	every $l(t_i)$ is not a singular leaf.
	Thus $\{l(t_i)\}_{i \in \mathbb{N}}$ is a collection of
	non-singular leaves of $\widetilde{\mathcal{E}^{s}}$ that converges to all leaves in $\mu_*$ simultaneously.
	$\{l(t_i)\}_{i \in \mathbb{N}}$ can be canonically identified with
	a sequence of leaves of $\widetilde{\mathcal{L}^{s}}$ that
	converges to all leaves in $\mu$ simultaneously,
	and each of them is not a boundary leaf of $\widetilde{\mathcal{L}^{s}}$.
\end{proof}

For a non-singular leaf $\widetilde{\lambda}$ of $\widetilde{\mathcal{E}^{s}}$,
we denote by $\Psi(\widetilde{\lambda})$ the $1$-dimensional foliation of $\widetilde{\lambda}$
consisting of the flowlines of $\widetilde{Y}$ and
denote by $L(\Psi(\widetilde{\lambda}))$ the leaf space of $\Psi(\widetilde{\lambda})$.
Then $L(\Psi(\widetilde{\lambda}))$ is homeomorphic to $\mathbb{R}$.
Given an orientation on $\widetilde{\lambda}$,
we can assign $\Psi(\widetilde{\lambda})$ a transverse orientation induced from
the orientation on $\widetilde{\lambda}$ and the orientations on the leaves of $\Psi(\widetilde{\lambda})$
(as the orientations on the flowlines).
We will henceforth assume $\Psi(\widetilde{\lambda})$ has this transverse orientation if
$\widetilde{\lambda}$ is oriented.

Note that $\mathcal{E}^{s}$ has a co-orientation induced from $\mathcal{F}^{s}$.
This induces a co-orientation on $\widetilde{\mathcal{E}^{s}}$
and also induces an orientation on 
every non-singular leaf of $\widetilde{\mathcal{E}^{s}}$,
where the orientations on these non-singular leaves are consistent with the co-orientation on 
$\widetilde{\mathcal{E}^{s}}$.
Since $\mathcal{E}^{s}$ is co-orientable,
the deck transformations of $\widetilde{N}$ preserve the co-orientation on $\widetilde{\mathcal{E}^{s}}$ and thus
preserve the orientation on every non-singular leaf of $\widetilde{\mathcal{E}^{s}}$.
For two distinct leaves $\widetilde{u_1}, \widetilde{u_2}$ of $\widetilde{\mathcal{E}^{u}}$ that
both intersect a leaf $\widetilde{\lambda}$ of $\widetilde{\mathcal{E}^{s}}$,
we denote by $l_1 = \widetilde{u_1} \cap \widetilde{\lambda}$,
$l_2 = \widetilde{u_2} \cap \widetilde{\lambda}$,
and we say $\widetilde{u_1} < \widetilde{u_2}$ at $\widetilde{\lambda}$ 
(resp. $\widetilde{u_1} > \widetilde{u_2}$ at $\widetilde{\lambda}$) if 
there is a positively oriented transversal (resp. negatively oriented transversal) of $\Psi(\widetilde{\lambda})$ 
from $l_1$ to $l_2$.
It's not hard to observe that if $\widetilde{u_1} < \widetilde{u_2}$ at $\widetilde{\lambda}$,
then $\widetilde{u_1} < \widetilde{u_2}$ at $\widetilde{\lambda_0}$ for 
any non-singular leaf $\widetilde{\lambda_0}$ of $\widetilde{\mathcal{E}^{s}}$ that
intersects both of $\widetilde{u_1}, \widetilde{u_2}$.
Moreover,
because the deck transformations of $\widetilde{N}$ preserve
the orientation on $\widetilde{\lambda}$ and the orientations on the flowlines of $\widetilde{Y}$,
for any $g \in Aut(\widetilde{N})$, 
we have
$g(\widetilde{u_1}) < g(\widetilde{u_2})$ at $g(\widetilde{\lambda})$ if and only if
$\widetilde{u_1} < \widetilde{u_2}$ at $\widetilde{\lambda}$.

We can split open $\mathcal{E}^{u}$ along the singular leaves of $\mathcal{E}^{u}$ to obtain
an essential lamination $\mathcal{L}^{u}$ of $N$.
Let $\widetilde{\mathcal{L}^{u}}$ be the pull-back lamination of $\mathcal{L}^{u}$ in $\widetilde{N}$.
For two leaves $\widetilde{u_1}, \widetilde{u_2}$ of $\widetilde{\mathcal{L}^{u}}$ that
both intersect a leaf $\widetilde{\lambda}$ of $\widetilde{\mathcal{L}^{s}}$,
if both of $\widetilde{u_1}, \widetilde{u_2}$ are not boundary leaves of $\widetilde{\mathcal{L}^{u}}$ and
$\widetilde{\lambda}$ is not a boundary leaf of $\widetilde{\mathcal{L}^{s}}$,
then they can be canonically identified with some non-singular leaves of
$\widetilde{\mathcal{E}^{u}}$ or $\widetilde{\mathcal{E}^{s}}$,
and thus we can define $\widetilde{u_1} < \widetilde{u_2}$ or
$\widetilde{u_1} > \widetilde{u_2}$ at $\widetilde{\lambda}$ according to the above discussion.

For arbitrary two leaves $\widetilde{\lambda_1}, \widetilde{\lambda_2}$ in $\mu$,
we can choose two non-boundary leaves 
$\widetilde{u_1}, \widetilde{u_2}$ of $\widetilde{\mathcal{L}^{u}}$ such that 
$\widetilde{u_1}$ intersects $\widetilde{\lambda_1}$,
$\widetilde{u_2}$ intersects $\widetilde{\lambda_2}$.
By Lemma \ref{approaching},
there exists a non-boundary leaf $\widetilde{l}$ of $\widetilde{\mathcal{L}^{s}}$ 
(sufficiently close to $\widetilde{\lambda_1}$ and $\widetilde{\lambda_2}$) such that $\widetilde{l}$
intersects both of $\widetilde{u_1}, \widetilde{u_2}$.
We define $\widetilde{\lambda_1} < \widetilde{\lambda_2}$ if
$\widetilde{u_1} < \widetilde{u_2}$ at $\widetilde{l}$ and
define $\widetilde{\lambda_1} > \widetilde{\lambda_2}$ if
$\widetilde{u_1} > \widetilde{u_2}$ at $\widetilde{l}$.
By the above discussion,
this definition is independent of the choice of $\widetilde{l}$,
and
$\widetilde{\lambda_1} < \widetilde{\lambda_2}$ implies
$h(\widetilde{\lambda_1}) < h(\widetilde{\lambda_2})$ for any $h \in Aut(\widetilde{N})$.
So Lemma \ref{orderable} holds in Case \ref{case2 2}.
This completes the proof of Proposition \ref{Dehn filling}.

\end{document}